\newcommand{\ul}{\underline}
\newcommand{\rad}{\textup{rad}\,}
\newcommand{\Hom}{\textup{Hom}}
\newcommand{\Ext}{\textup{Ext}}
\newcommand{\End}{\textup{End}}
\newcommand{\card}{\textup{card}}
\newcommand{\spn}{\textup{span}\,}
\newcommand{\HH} {\operatorname{HH}\nolimits}
\newcommand{\Der}{\textup{Der}}
\newcommand{\Inn}{\textup{Inn}}
\newcommand{\calc}{\mathcal{C}}
\newcommand{\hh}{\operatorname{H}\nolimits}
\newcommand{\pgq}{\geqslant}
\newcommand{\ppq}{\leqslant}
\newcommand{\cale}{\mathcal{E}}
\renewcommand{\ker}{\operatorname{Ker}\nolimits}
\newcommand{\im}{\operatorname{Im}\nolimits}
\newcommand{\id}{\operatorname{id}\nolimits}
\newcommand{\mo}{\mathfrak{s}}
\newcommand{\mt}{\mathfrak{t}}
\newcommand{\tup}{\textsuperscript}
\newcommand{\sfp}{\mathsf{P}}
\newcommand{\sfq}{\mathsf{Q}}
\newcommand{\sfr}{\mathsf{R}}
\newcommand{\sfbar}{\mathsf{Bar}}
\newcommand{\da}{\text{-}}
\newtheorem{thm}{Theorem}[section]
\newtheorem{prop}[thm]{Proposition}
\newtheorem{lem}[thm]{Lemma}
\newtheorem{cor}[thm]{Corollary}
\newtheorem{thmIntro}{Theorem}    
\theoremstyle{definition}
\newtheorem{defn}[thm]{Definition}
\newtheorem{example}[thm]{Example}
\theoremstyle{remark}
\newtheorem{remark}[thm]{Remark}
\numberwithin{equation}{section}
\begin{document}


\title{Hochschild cohomology of relation extension algebras}
\dedicatory{Dedicated to Eduardo N. Marcos for his 60th birthday}
\author[I. Assem]{Ibrahim Assem}
\address{I. Assem, D\'epartement de math\'ematiques, Universit\'{e} de Sherbrooke, Sherbrooke, Qu\'{e}bec, Canada, JIK2R1. }
\email{Ibrahim.Assem@usherbrooke.ca}
\author[M. A. Gatica]{Maria Andrea Gatica}
\address{ M. A. Gatica, Departamento de Matem\'atica, Universidad Nacional del Sur, Avenida Alem 1253, (8000) Bah\'{\i}a Blanca, Buenos Aires, Argentina.}
\email{mariaandrea.gatica@gmail.com}
\author[R. Schiffler]{Ralf Schiffler}
\address{R. Schiffler, Department of Mathematics, University of Connecticut, 
Storrs, CT 06269-3009, USA}
\email{schiffler@math.uconn.edu}
\author[R. Taillefer]{Rachel Taillefer\textup{*}}
\thanks{* corresponding author}
\address{R. Taillefer, Laboratoire de Math\'ematiques (UMR 6620), Universit\'e Blaise Pascal, Complexe universitaire des C\'ezeaux, 3, place Vasarely -- TSA 60026 -- CS 60026, 63178 Aubi\`ere Cedex, France.}
\email[corresponding author]{Rachel.Taillefer@math.univ-bpclermont.fr}

\keywords{Hochschild cohomology; trivial extensions; relation extensions}
\subjclass[2010]{16E40; 16S70}


\begin{abstract} Let  $B$ be the split extension of a finite dimensional algebra $C$ by a $C$-$C$-bimodule $E$.  We define a morphism of associative graded algebras $\varphi^*:\HH^*(B)\rightarrow \HH^*(C)$ from the Hochschild cohomology of $B$ to that of $C$, extending similar constructions for the first cohomology groups made and studied by Assem, Bustamante, Igusa, Redondo and Schiffler.

 In the case of a trivial extension $B=C\ltimes E$, we give necessary and sufficient conditions for each $\varphi^n$ to be surjective.  We prove the surjectivity of $\varphi^1$ for a class of trivial extensions that includes relation extensions and hence cluster-tilted algebras. Finally, we study the kernel of $\varphi^1$ for any trivial extension, and give a more precise description of this kernel in the case of relation extensions.
\end{abstract}

\maketitle

\section*{Introduction}

The purpose of this paper is to study the relation between the first Hochschild cohomology group of an algebra $C$ with coefficients in the regular bimodule $_CC_C$, and that of a relation extension $B$ of $C$. We recall that, if $k$ is a field, $C$ a finite dimensional $k$-algebra and $E$ a $C$-$C$-bimodule, equipped with a product $E \otimes_C E \rightarrow E$, then the vector space $B=C \oplus E$ is given an algebra structure if we set
\[  (c,x)(c',x')=(cc',cx'+xc'+xx') \] 
for $(c,x), \ (c',x') \in C \oplus E$. The algebra $B$ is then called a \emph{split extension} of $C$ by $E$. If $E^2=0$, then $B$ is called a \emph{trivial extension} of $C$ by $E$, and if moreover $C$ is triangular of global dimension at most two and $E= \Ext^2_C(DC,C)$, then $B$ is called the \emph{relation extension} of $C$, see \cite{ABS1}.

Relation extension algebras are of interest because of  their close relationship with cluster-tilted algebras. Cluster-tilted algebras were defined in \cite{BMR} (and, independently, in \cite{CCS} for  type $\mathbb{A}$) as a by-product of the now extensive theory of cluster algebras of S.~Fomin and A.~Zelevinsky, see, for instance, \cite{FZ}. They were the subject of several investigations. In particular, it was proved in \cite{ABS1} that, if $C$ is a tilted algebra, then its relation extension is cluster-tilted and, conversely, every cluster-tilted algebra is of this form. Of course, there exist relation extensions which are not cluster-tilted algebras. The first Hochschild cohomology groups of a tilted algebra $C$ and the corresponding cluster-tilted algebra $B$ were compared by means of a linear map  $\varphi\colon \HH^1(B)\rightarrow \HH^1(C)$, see \cite{AR,ABIS,ARS}. In each of these papers, it appeared that $\varphi$ is surjective, but the proof in each case was long and combinatorial.

The present paper arose from an attempt to produce a purely homological proof of the above mentioned results. We start by proving that, for every split extension $B$ of $C$ and every $n \geq 1$, there exists a linear map $\varphi^n: \HH^n(B)\rightarrow \HH^n(C)$ such that  $\varphi^1$ coincides with the map $\varphi\colon \HH^1(B)\rightarrow \HH^1(C)$ defined above. Recalling that each of  $\HH^*(B)=\bigoplus_{n\pgq 0}\HH^n(B)$ and $\HH^*(C)$ has an algebra structure given by the cup product, this leads to our first theorem.

\begin{thmIntro}\label{thmIntro:phi surj and ker} Let $B$ be a split extension of $C$, then the maps $\varphi^n$ induce a morphism of associative graded algebras $\varphi^*: \HH^*(B) \rightarrow \HH^*(C)$.
\end{thmIntro}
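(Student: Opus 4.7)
The plan is to exploit the two canonical algebra homomorphisms attached to any split extension: the embedding $\sigma\colon C\to B$, $c\mapsto(c,0)$, and the projection $\pi\colon B\to C$, $(c,x)\mapsto c$, which satisfy $\pi\sigma=\id_C$. Working with the normalised Hochschild cochain complexes, so that $\HH^n(B)$ is computed from $k$-linear maps $f\colon B^{\otimes n}\to B$ and similarly for $C$, the natural candidate for $\varphi^n$ at the cochain level is
$$\varphi^n(f)(c_1\otimes\cdots\otimes c_n)=\pi\bigl(f(\sigma(c_1)\otimes\cdots\otimes\sigma(c_n))\bigr).$$
Conceptually this is the composite $\HH^n(B,B)\xrightarrow{\pi_*}\HH^n(B,C)\xrightarrow{\sigma^*}\HH^n(C,C)$, in which $\pi$ is used to view $C$ as a $B$-bimodule and $\sigma$ is used to restrict scalars back to $C$. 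I would first check that this formula agrees with the degree-one map already described in the introduction, then verify that it is a chain map: since the Hochschild differential consists only of bimodule actions and of internal products $b_ib_{i+1}$, the multiplicativity of both $\sigma$ and $\pi$ gives, term by term, the identity $\delta\varphi^n(f)=\varphi^{n+1}(\delta f)$, so $\varphi^n$ descends to a well-defined linear map $\HH^n(B)\to\HH^n(C)$.

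The substantive content of the theorem is then the compatibility with the cup product. Using the bar-complex formula $(f\cup g)(b_1,\ldots,b_{n+m})=f(b_1,\ldots,b_n)\cdot g(b_{n+1},\ldots,b_{n+m})$, substituting $b_i=\sigma(c_i)$ and applying $\pi$, the key identity $\pi(xy)=\pi(x)\pi(y)$ splits the result into the product of $\varphi^n(f)(c_1,\ldots,c_n)$ and $\varphi^m(g)(c_{n+1},\ldots,c_{n+m})$. This is precisely $(\varphi^n(f)\cup\varphi^m(g))(c_1,\ldots,c_{n+m})$, establishing $\varphi^{n+m}(f\cup g)=\varphi^n(f)\cup\varphi^m(g)$. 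Unitality is immediate from $\pi(1_B)=1_C$.

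There is no genuine obstacle of depth in this argument: everything reduces to the multiplicativity of $\sigma$ and $\pi$, which is exactly what makes the cup product behave. The only subtlety worth flagging is that neither $\sigma$ alone nor $\pi$ alone suffices to produce a map in the direction $\HH^*(B)\to\HH^*(C)$; one must combine $\pi$ on the coefficients with $\sigma$ as a restriction of scalars, and the cochain-level formula above is the simultaneous realisation of both. Once this composite description is in place, well-definedness on cohomology, unitality, and multiplicativity all follow formally from the single fact that $\sigma$ and $\pi$ are algebra homomorphisms with $\pi\sigma=\id_C$.
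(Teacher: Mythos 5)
Your proposal is correct and follows essentially the same route as the paper: your $\sigma$ and $\pi$ are the paper's $q$ and $p$, your cochain-level formula is exactly $pfq^{\otimes n}$, the chain-map verification is the paper's Lemma~\ref{lem:prelim to definition of nth projection morphism}, and the cup-product and unitality checks coincide with the paper's proof (the factorisation through $\HH^n(B,C)$ you mention is also recorded there, in Lemma~\ref{lemma:phin reinterpreted}). The only cosmetic discrepancy is your reference to the normalised complex where the paper uses the full Hochschild complex; this changes nothing since $q$ preserves idempotents.
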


The algebra morphism $\varphi^*$ is called the  \emph{Hochschild projection morphism}. It is to be noted that $\varphi^*$ is not in general a morphism of graded Lie algebras and it is not surjective in general, even in the case where $B$ is a trivial extension of $C$, see Examples \ref{ex explicit ses} and \ref{not surjective}, respectively.

Using the results of \cite{CMRS}, we then proceed to find necessary and sufficient conditions for the surjectivity of each $\varphi^n$, see Proposition~\ref{prop: SC for surjectivity} and Corollary~\ref{cor:technical condition}. These conditions are satisfied, for every $n$, in the two familiar cases where $E=\,_CDC_C$, the minimal injective cogenerator bimodule, and $E=\,_CC_C$, the regular bimodule.

We then consider the $C$-$C$-bimodule $E_m=\Ext^m_C(DC,C)$ for any $m$. This bimodule appeared naturally in the study of a generalisation of the cluster-tilted algebras, namely the  \emph{$(m-1)$-cluster-tilted algebras}, introduced by H.~Thomas in \cite{T}. Indeed, it is shown in \cite{FPT} that, under some conditions, an $(m-1)$-cluster-tilted algebra can be written as the trivial extension of an iterated tilted algebra $C$ of global dimension at most $m$ by the bimodule $E_m$. We prove here that, if $B$ is the trivial extension  of any algebra $C$ by the $C$-$C$-bimodule $E_m$, then the first Hochschild projection morphism $\varphi^1: \HH^1(B)\rightarrow \HH^1(C)$ is surjective, see Theorem~\ref{cor:phi surjective Es}.

We next assume that $C$ is triangular of global dimension at most two and that  $E=E_2= \Ext_C^2(DC,C)$, so that the trivial extension $B$ of $C$ by $E$ is the relation extension of $C$. We then get our second main result.

\begin{thmIntro}\label{thmIntro:case triangular} Let $B$ be the relation extension of a triangular algebra $C$ of global dimension at most two by the $C$-$C$-bimodule $E=\Ext^2_C(DC,C)$. Then we have short exact sequences
\begin{enumerate}[(a)]
\item $0\rightarrow \hh^0(B,E)\longrightarrow \HH^0(B) \xrightarrow{\varphi^0} \HH^{0}(C)\rightarrow 0.$
\item $0\rightarrow \hh^1(B,E)\longrightarrow \HH^1(B) \xrightarrow{\varphi^1} \HH^{1}(C)\rightarrow 0.$
\end{enumerate}
\end{thmIntro}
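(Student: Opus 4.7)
The plan is to extract a long exact sequence from the short exact sequence of $B$-bimodules
\[
0 \longrightarrow E \longrightarrow B \xrightarrow{\pi} C \longrightarrow 0,
\]
in which $C$ is given the $B$-bimodule structure coming from the algebra projection $\pi$. Applying $\Hom_{B^e}(P,-)$ to a projective $B^e$-resolution $P$ of $B$ produces
\[
\cdots \longrightarrow \hh^n(B,E) \xrightarrow{\,j^n\,} \HH^n(B) \xrightarrow{\,p^n\,} \hh^n(B,C) \xrightarrow{\,\delta^n\,} \hh^{n+1}(B,E) \longrightarrow \cdots,
\]
and I would check, using the explicit cochain definition of $\varphi^*$ from the earlier sections, that $\varphi^n$ factors as $\iota^{*}\circ p^n$, where $\iota^{*}\colon \hh^n(B,C) \to \HH^n(C)$ is restriction along the algebra inclusion $\iota\colon C\hookrightarrow B$. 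Because $\pi\iota=\id_C$, we have $\iota^{*}\pi^{*}=\id$, so $\iota^{*}$ is split surjective.

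For (a), the space $\hh^0(B,C)$ is the set of $c\in C$ with $\pi(b)c=c\pi(b)$ for all $b\in B$, i.e.\ $Z(C)=\HH^0(C)$; and in degree zero $\iota^{*}$ is the identity. Hence $\varphi^0$ coincides with $p^0\colon Z(B)\to Z(C)$, $z\mapsto\pi(z)$. Since $C$ is triangular (hence connected) one has $\HH^0(C)=k\cdot 1_C$, and $p^0(1_B)=1_C$ forces $p^0$ to be surjective. Thus $\delta^0=0$, the first terms of the long exact sequence provide (a), and as a byproduct $j^1$ becomes injective.

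For (b), surjectivity of $\varphi^1$ is exactly Theorem~\ref{cor:phi surjective Es}. The identification $\ker\varphi^1=\hh^1(B,E)$ reduces, via the long exact sequence, to showing that $\iota^{*}\colon\hh^1(B,C)\to\HH^1(C)$ is \emph{injective}: one then has $\ker\varphi^1=\ker p^1=\im j^1\cong\hh^1(B,E)$. A direct cochain-level computation identifies $\ker\iota^{*}$ with $\Hom_{C\text{-}C}(E,C)$: any class in $\ker\iota^{*}$ is represented, after subtracting a suitable inner derivation of $B$ with values in $C$, by a derivation $g\colon B\to C$ with $g|_C=0$; such a $g$ is determined by the $C$-bimodule homomorphism $g|_E\colon E\to C$, and the only inner derivation $B\to C$ vanishing on $C$ is zero.

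The main obstacle is therefore the vanishing
\[
\Hom_{C\text{-}C}(E,C)=0,
\]
which is where the hypotheses on $C$ and $E$ really intervene. Writing $C=kQ/I$ with $Q$ acyclic, I would invoke the description of $E=\Ext^2_C(DC,C)$ from~\cite{ABS1}: $E$ has a basis indexed by a minimal set of generating relations of $I$, with the generator $\sigma_\rho$ attached to a minimal relation $\rho\colon s\to t$ satisfying $e_s\sigma_\rho e_t=\sigma_\rho$ (endpoints reversed from those of $\rho$). Any bimodule homomorphism $f\colon E\to C$ would then send $\sigma_\rho$ into $e_sCe_t$, the span of paths from $t$ to $s$ in $Q$; but the existence of a path $s\to t$ of length $\pgq 2$ inside $\rho$, together with acyclicity of $Q$, forces $e_sCe_t=0$, hence $f(\sigma_\rho)=0$ for every generator. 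Thus $f=0$, $\iota^{*}$ is injective, and (b) falls out of the long exact sequence as claimed.
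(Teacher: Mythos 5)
Your proposal is correct, and it reaches both exact sequences by a route that overlaps with, but genuinely streamlines, the paper's. Both arguments start from the long exact sequence attached to $0\to E\to B\to C\to 0$ and the factorisation $\varphi^n=\nu_n\circ\HH^n(p)$, where $\nu_n\colon[g]\mapsto[gq^{\otimes n}]$ is your $\iota^*$ (this is exactly Lemma~\ref{lemma:phin reinterpreted}); your treatment of degree zero is equivalent to the paper's appeal to Lemma~\ref{lem:phi0 ses} with $Z(C)=k$. The real divergence is in how $\ker\varphi^1$ is pinned down. The paper first proves, for an arbitrary trivial extension with $E$ symmetric over $Z(C)$ and $\varphi^1$ surjective, that $\ker\varphi^1\cong\hh^1(B,E)\oplus\cale(E)$ (Theorem~\ref{thm:ker phi for trivial extension}), which requires computing the connecting morphism $\delta^1_B$ via the Cibils--Marcos--Redondo--Solotar decomposition (Lemma~\ref{lem ker connecting morphism}), and only then shows $\cale(E_2)=0$ (Lemma~\ref{lem: cale E trivial}). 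You bypass all of that by proving that $\nu_1$ is \emph{injective}, i.e.\ that its kernel $\Hom_{C\da C}(E,C)$ vanishes outright; then $\ker\varphi^1=\ker\HH^1(p)\cong\hh^1(B,E)$ drops out of the long exact sequence with no analysis of $\delta^1_B$. Your vanishing claim is a priori stronger than $\cale(E_2)=0$, but it holds, and by essentially the same acyclicity argument the paper uses: indeed the proof of Lemma~\ref{lem: cale E trivial} only ever uses that the map is a bimodule morphism sending each new arrow into a space of antiparallel paths, so it establishes your stronger statement. What you trade away is generality: for trivial extensions that are not relation extensions, $\Hom_{C\da C}(E,C)$ need not vanish and $\cale(E)$ genuinely contributes to the kernel (Example~\ref{ex explicit ses continued}), so the paper's intermediate theorem cannot be recovered from your shortcut. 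Two cosmetic slips: $E_2$ is \emph{generated} as a bimodule by the elements $\sigma_\rho$ (its top has a basis indexed by the relations, but $E_2$ itself is usually larger), though your argument only needs generation plus the bimodule-morphism property; and your idempotent bookkeeping $e_s\sigma_\rho e_t$ is internally consistent but reversed relative to the paper's convention. Neither affects the proof.
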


This generalises to arbitrary relation extensions the main results of each of  \cite{AR,ABIS,ARS}, furnishing at the same time a homological interpretation of these results. As a nice consequence of this theorem, we get that, if $C$ is tilted, so that $B$ is cluster-tilted, then the Hochschild projection morphism $\varphi^*:\HH^*(B)\rightarrow\HH^*(C)$ is a surjective morphism of algebras, see Theorem~\ref{cor: HH*}.

The paper is organised as follows. After a short introductory section whose purpose is to fix the notation and recall a few useful facts, Section~\ref{sec:Hochschild proj} is devoted to the explicit construction of the Hochschild projection morphism and thus to the proof of our Theorem A. In  Section~\ref{sec:trivial extensions}, we specialise to trivial extensions and find necessary and sufficient conditions for the surjectivity of each of the $\varphi^n$. As a consequence, we prove that, if $B$ is the trivial extension of $C$ by $E$, then $\varphi^1$ is surjective. The study of the kernel of $\varphi^1$ in Section~\ref{sec:kernel} then leads us to our Theorem B in Section~\ref{sec:relation extensions}.

\section{Preliminaries}\label{sec:preliminaries}

\subsection{Algebras and quivers}

Throughout this paper, $k$ denotes a commutative field, and all algebras are finite dimensional over $k$ and have an identity.

Given a finite quiver $Q=(Q_0,Q_1)$, we denote by $kQ$ its path algebra. Two paths $w_1, w_2$ in $Q$ are called \emph{parallel} if they have the same source $\mo(w_1)=\mo(w_2)$ and the same target $\mt(w_1)=\mt(w_2)$.  A \emph{relation} from vertex $x$ to vertex $y$ is a linear combination 
$\sum_{i=1}^{t} \lambda_i w_i $ where $\lambda_i \in k \backslash \{ 0\}$ and the $w_i$ are distinct paths from $x$ to $y$. Let $kQ^+$ be the two-sided ideal of $kQ$ generated by the arrows. An ideal $I$ of $kQ$ is \emph{admissible} if there exists $s \pgq 2$ such that $(kQ^+)^s \subseteq  I  \subseteq (kQ^+)^2$. In this case, the pair $(Q,I)$ is called a \emph{bound quiver}. The algebra $C=kQ/I$ is basic, connected whenever $Q$ is, and finite dimensional. Moreover,  the ideal $I$ is finitely generated.  A \emph{system of relations} $R$ for $C$ is a subset of $\bigcup_{x,y \in (Q_C)_0}(e_xIe_y)$ such that $R$, but no proper subset of $R$, generates $I$ as a two-sided ideal.

Conversely, if $C$ is a finite dimensional basic and connected $k$-algebra, there exists a unique connected quiver $Q$ and (at least) an admissible ideal $I$ of $kQ$ such that $C \cong kQ/I$. The ordinary duality between right and left $C$-modules is denoted by $D=\Hom_k(-,k)$. Given a vertex $x$ in $Q$, we denote by $e_x$ the corresponding primitive idempotent of $C$ and by $P_C(x)=e_xC, \, I_C(x)=D(Ce_x)$ and $S_C(x)$ the corresponding  indecomposable right projective, injective and simple $C$-modules, respectively. For more details on bound quivers and their use in the representation theory of algebras, we refer the reader to \cite{ARS, ASS, S}.

\subsection{Hochschild cohomology}
\label{subsec:Hochschild cohomology}

Let $C$ be a finite dimensional $k$-algebra and $E$ be a $C$-$C$-bimodule which is finite dimensional over $k$. The  \emph{Hochschild complex} is the complex
\[ 0\rightarrow E \xrightarrow{b^1} \Hom_k(C,E)  \xrightarrow{b^2} \cdots  \rightarrow \Hom_k(C^{\otimes i},E)  \xrightarrow{b^{i+1}} \Hom_k(C^{\otimes {(i+1)}},E) \rightarrow \cdots \]
where, for each $i >0, \ C^{\otimes i}$ denotes the $i$-fold tensor product of $C$ with itself over $k$. The differentials $b^i$ are defined as follows: $ b^1: E \rightarrow \Hom_k(C,E)$ is given by $(b^1x)(c)=cx-xc$ for $x \in E, \ c \in C, $ and $b^{i+1}$ is given by 
\begin{align*}
 (b^{i+1}f)(c_0 \otimes \cdots \otimes c_{i})&= c_0f(c_1 \otimes \cdots \otimes c_{i}) + \sum_{j=1}^i (-1)^j f(c_0 \otimes \cdots \otimes c_{j-1} c_{j}\otimes \cdots  \otimes c_{i}) \\
&+  (-1)^{i+1}f(c_0 \otimes \cdots \otimes c_{i-1})c_{i}
\end{align*}
for a $k$-linear map $f: C^{\otimes i} \rightarrow E$ and elements $c_0, \cdots, c_{i} $ in $C$.

The $i$\tup{th} cohomology group of this complex is called the \emph{$i$\tup{th} Hochschild cohomology group} of $C$ with coefficients in $E$, and is denoted by $\hh^i(C,E)$. If $_CE_C={}_CC_C$ then we denote $\HH^i(C)= \hh^i(C,C)$.

We are particularly interested in the first Hochschild cohomology group.  Let $\Der(C,E)$ be the vector space of all $k$-linear maps $d: C\rightarrow E$ such that, for $c,c' \in C$, we have $d(cc')=cd(c') +d(c)c'$. Such a  map is called a \emph{derivation}. A derivation $d$ is \emph{inner} if there exists $x \in E$ such that $d=[x,-]$, that is, $d(c)=[x,c]=xc-cx$ for all $c \in C$. We denote by $\Inn(C,E)$ the subspace of $\Der(C,E)$ consisting of the inner derivations. Then we have $\hh^1(C,E) \cong \Der(C,E) / \Inn (C,E)$.

The latter expression may be simplified. Let $\{ e_1, \cdots, e_n\}$ be a complete set of primitive orthogonal idempotents of $C$. A derivation $d: C \rightarrow E$ is \emph{normalised} if $d(e_i)=0$ for all $i$. A normalised derivation $d$ has the nice property that, if $c \in e_iCe_j$, then $d(c) \in e_i E e_j$. Let $\Der_0(C,E)$ be the subspace of $\Der(C,E)$ consisting of the normalised derivations. Let also $ \Inn_0(C,E)= \Der_0(C,E) \cap \Inn(C,E)$. Then we have $\hh^1(C,E) \cong \Der_0(C,E)/ \Inn_0(C,E)$.

\subsubsection{Structure of $\HH^*(C)$.}\label{subsubsec:structure HH}

It is well-known that $\HH^*(C)=\bigoplus_{n\pgq 0}\HH^n(C)$ is endowed with some extra structure. It is an associative algebra for the cup-product  described as follows: if $\zeta_1\in\HH^s(C)$  and $\zeta_2\in \HH^{t}(C)$ are represented by cocycles $f_1\in\Hom_k(C^{\otimes s},C)$ and  $f_2\in\Hom_k(C^{\otimes t},C)$, then  $\zeta_1\smile \zeta_2$ is the cohomology class of  the map $f_1\times f_2\in \Hom_k(C^{\otimes (s+t)},C)$ defined by \[(f_1\times f_2)(c_1\otimes \cdots \otimes c_{s+t})=f_1(c_1\otimes\cdots \otimes c_s)f_2(c_{s+1}\otimes \cdots\otimes c_{s+t}).\] It was shown by Gerstenhaber in \cite{G} that this cup-product is graded commutative, that is, $\zeta_1\smile\zeta_2=(-1)^{st}\zeta_2\smile\zeta_1$, by means of special operations which he also used to construct a graded Lie product $[-,-]$ on $\HH^{*-1}(C).$ We shall not need this graded Lie bracket, but let us point out  that its restriction to $\HH^1(C)$ is  the natural Lie bracket on  derivations, given by $[d,d']=d\circ d'-d'\circ d.$

\subsubsection{One-point extensions.}

We need a result due to D.~Happel. We recall that, if $C$ is an algebra and $M$ a finitely generated right $C$-module, then the \emph{one-point extension} of $C$ by $M$ is the $k$-algebra
$$C[M]=\begin{pmatrix}C &0\\M&k\end{pmatrix}=\left\{ \begin{pmatrix}c &0\\m&\lambda\end{pmatrix} \, |  \, c \in C, m\in M, \lambda \in k \right\}$$ with the usual addition, and the multiplication induced from the $C$-module structure of $M$. We have the following result. 

\begin{thm}\cite[Theorem 5.3]{H}\label{thm 1.1} Let $B=C[M]$. Then there exists a long exact cohomology sequence
\[ 0\rightarrow  \HH^0(B) \rightarrow \HH^0(C)\rightarrow \End_CM/k \rightarrow   \HH^1(B) \rightarrow \HH^1(C)\rightarrow \Ext^1_C(M,M) \rightarrow \cdots\]
\end{thm}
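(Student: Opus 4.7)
Proof proposal. The plan is to exploit the triangular matrix description
$$B = \begin{pmatrix} C & 0 \\ M & k \end{pmatrix}$$
and derive the long exact sequence from a short exact sequence of $B$-bimodules.

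With the idempotents $e_0 = \begin{pmatrix}1&0\\0&0\end{pmatrix}$ and $e_1 = \begin{pmatrix}0&0\\0&1\end{pmatrix}$ of $B$, the off-diagonal part $N = \begin{pmatrix}0&0\\M&0\end{pmatrix}$ is a square-zero two-sided ideal of $B$, isomorphic as a $B$-bimodule to $M$ (with the $B$-actions induced by the projections $B\twoheadrightarrow k$ on the left and $B\twoheadrightarrow C$ on the right), while $B/N \cong C \times k$ as $k$-algebras. We thus obtain the short exact sequence of $B$-bimodules
$$0 \to M \to B \to C \times k \to 0.$$
Applying $\hh^*(B,-) = \Ext^*_{B^e}(B,-)$ yields a long exact sequence
$$\cdots \to \hh^n(B,M) \to \HH^n(B) \to \hh^n(B, C\times k) \to \hh^{n+1}(B,M) \to \cdots$$

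The remaining work is to identify each term intrinsically. Splitting $\hh^n(B, C\times k) \cong \hh^n(B,C) \oplus \hh^n(B,k)$, one first checks that $\hh^n(B,k) \cong \hh^n(k,k)$, which vanishes for $n\geq 1$ and equals $k$ for $n=0$. For the summand $\hh^n(B,C)$, the $B$-bimodule structure on $C$ factors through the projection $B\twoheadrightarrow C$, and since $B$ is projective on each side as a $C$-module, a change-of-rings argument gives $\hh^n(B,C) \cong \HH^n(C)$. Finally, a careful homological computation that exploits the specific structure of $M$ as a $B$-bimodule (left action factoring through $k$, right action through $C$) yields $\hh^n(B,M) \cong \Ext^{n-1}_C(M,M)$ for $n \geq 1$ and $\hh^0(B,M) = 0$.

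Substituting these identifications into the long exact sequence produces the announced sequence, with the extra copy of $k$ contributed by $\hh^0(B,k)$ being absorbed by the connecting homomorphism into $\End_C M = \Ext^0_C(M, M)$ to yield the quotient $\End_C M / k$. The main obstacle is the identification $\hh^n(B,M) \cong \Ext^{n-1}_C(M,M)$: the degree shift by one, coming from the square-zero nature of the ideal $N$, is exactly what allows the $\Ext$ groups of $M$ as a $C$-module to appear, but it requires an explicit projective bimodule resolution adapted to the one-point extension structure.
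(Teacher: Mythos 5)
The paper offers no proof of this statement: it is quoted directly from Happel \cite[Theorem 5.3]{H}, so there is no in-text argument to compare yours against. On its own terms, your strategy is the standard one and is essentially Happel's: regard $B=C[M]$ as the trivial extension of $C\times k$ by the square-zero ideal $N\cong M$ (left action through $B\twoheadrightarrow k$, right action through $B\twoheadrightarrow C$), apply $\hh^*(B,-)$ to $0\to M\to B\to C\times k\to 0$, and identify the outer terms. The three identifications you assert are all true, and the bookkeeping at the start of the sequence (absorbing $\hh^0(B,k)\cong k$ into $\End_CM$ to produce $\End_CM/k$, the connecting map sending $(z,\lambda)$ to $m\mapsto \lambda m-mz$) works exactly as you describe.

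That said, the proposal is not yet a proof, for a reason you acknowledge yourself: the isomorphism $\hh^n(B,M)\cong\Ext^{n-1}_C(M,M)$ is essentially the entire content of the theorem once the formal long exact sequence is written down, and you leave it as ``a careful homological computation''. Moreover, the justification you offer for $\hh^n(B,C)\cong\HH^n(C)$ --- that ``$B$ is projective on each side as a $C$-module'' --- is not the relevant hypothesis: a surjection of algebras does not in general induce an isomorphism on Hochschild cohomology with pulled-back coefficients, and what is actually used here is that $\ker(B\twoheadrightarrow C)=Be_1B$ with $e_1Be_1=k$ and $Be_1\otimes_k e_1B\xrightarrow{\ \sim\ }Be_1B$ (a stratifying ideal), and similarly for $Be_0B$. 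The cleanest way to close all three gaps at once is the machinery the paper already imports in Proposition~\ref{prop:results CMRS}: apply it to the trivial extension $B=(C\times k)\ltimes M$ with coefficient bimodules $X=C\times k$ and $X=M$ (both satisfy $XM=0=MX$), note that $M\otimes_{C\times k}M=0$ so every column $\calc^s$ with $s\pgq 2$ vanishes and, since $M$ and $C\times k$ are supported on complementary blocks of the idempotents, the remaining cross terms vanish as well; this yields $\hh^n(B,C\times k)\cong\HH^n(C)\oplus\hh^n(k,k)$ and $\hh^n(B,M)\cong\Ext^{n-1}_{(C\times k)\da (C\times k)}(M,M)=\Ext^{n-1}_C(M,M)$ for $n\pgq 1$, which is exactly the degree shift your argument needs.
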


For all unexplained notions and results about Hochschild cohomology, we refer the reader to \cite{CE, H, MJR, G}.

\section{The Hochschild projection morphisms}\label{sec:Hochschild proj}

Let $C$ be a finite-dimensional $k$-algebra and let $E$ be a finitely generated $C$-$C$-bimodule equipped with a product, that is, an associative $C$-$C$-bimodule morphism $E\otimes _C E\rightarrow E$. The \emph{split extension}  of $C$ by $E$ is the $k$-algebra $B$ which is equal to $C\oplus E$ as a $k$-vector space  and whose product is given by 
\[ (c,e)(c',e')=(cc',ce'+ec'+ee') \] for all $(c,e),\ (c',e')$ in $C\oplus E$. With this multiplication, $E$ becomes a two-sided ideal in $B$, hence a $B$-$B$-bimodule. If $E$ is a nilpotent ideal, that is, $E$ is contained in the radical of $B$, then we say that $B$ is a \emph{split-by-nilpotent extension} of $C$ by $E$. If $E^2=0$, then we say that $B$ is the \emph{trivial extension} of $C$ by $E$, which we denote by  $B=C\ltimes E.$

Given such a split extension $B$ of $C$ by $E$, there is an exact sequence of vector spaces
\[ 0\rightarrow E\xrightarrow{i} B\stackrel[q]{p}\rightleftarrows C\rightarrow 0 \] in which $p:(c,x)\mapsto c$ and $i:x\mapsto (0,x)$. Clearly, $p$ is an algebra morphism which has a section $q:c\mapsto (c,0)$. This sequence may also be viewed as a sequence of $B$-$B$-bimodules, where the $B$-$B$-bimodule structure of $C$ is defined by means of $p:B\rightarrow C$. Finally, it may also be viewed as a split exact sequence of $C$-$C$-bimodules by means of $q$.

Now let $d:B\rightarrow B$ be a $k$-linear map. Then $pdq:C\rightarrow C$ is also $k$-linear. It was shown in \cite[4.1]{AR} that this correspondence induces a map between the first Hochschild cohomology groups. The same morphism was studied in  \cite{ARS,ABIS}.  Here, we extend this to the $n$\tup{th} Hochschild cohomology groups. 

\begin{lem}\label{lem:prelim to definition of nth projection morphism}  Let $B$ be the split extension of $C$ by $E.$ Let $b_B^n$ (respectively $b_C^n$) denote the $n$\tup{th} differential in the Hochschild complex of $B$   with coefficients in $B$ (respectively of $C$ with coefficients in $C$). Then for any  $f\in \Hom_k(B^{\otimes n},B)$  we have
\[ b_C^{n+1}(pfq^{\otimes n})=pb_B^{n+1}(f)q^{\otimes (n+1)}. \]
\end{lem}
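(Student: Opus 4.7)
The plan is to unwind both sides of the identity using the explicit formula for the Hochschild differential, and reduce to two algebraic facts about the structure maps.

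First I would record the following observation, which is the real engine: not only is $p:B\to C$ an algebra morphism with $pq=\id_C$, but the section $q:C\to B$, $c\mapsto(c,0)$, is \emph{also} an algebra morphism. Indeed, $q(c)q(c')=(c,0)(c',0)=(cc',0)=q(cc')$ directly from the multiplication on $B=C\oplus E$, regardless of whether $E^2=0$. This is the key point that makes the computation work; without it the middle terms of the two differentials would not match.

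Next I would evaluate both sides on a pure tensor $c_0\otimes\cdots\otimes c_n\in C^{\otimes(n+1)}$. On the left, applying the formula for $b_C^{n+1}$ gives a sum with boundary terms $c_0\cdot pf(q(c_1)\otimes\cdots\otimes q(c_n))$ and $(-1)^{n+1}pf(q(c_0)\otimes\cdots\otimes q(c_{n-1}))\cdot c_n$, together with middle terms $\sum_{j=1}^n(-1)^j pf(q(c_0)\otimes\cdots\otimes q(c_{j-1}c_j)\otimes\cdots\otimes q(c_n))$. On the right, the formula for $b_B^{n+1}(f)$ applied to $q(c_0)\otimes\cdots\otimes q(c_n)$ and then composed with $p$ produces analogous boundary terms $p(q(c_0)\cdot f(q(c_1)\otimes\cdots\otimes q(c_n)))$ and $(-1)^{n+1}p(f(q(c_0)\otimes\cdots\otimes q(c_{n-1}))\cdot q(c_n))$, together with middle terms $\sum_{j=1}^n(-1)^j pf(q(c_0)\otimes\cdots\otimes q(c_{j-1})q(c_j)\otimes\cdots\otimes q(c_n))$.

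Finally I would match the two expressions term by term. The middle terms coincide because $q$ is an algebra morphism, so $q(c_{j-1})q(c_j)=q(c_{j-1}c_j)$. The boundary terms coincide because $p$ is an algebra morphism and $pq=\id_C$: for instance
\[
p\bigl(q(c_0)\cdot f(q(c_1)\otimes\cdots\otimes q(c_n))\bigr)=p(q(c_0))\cdot pf(q(c_1)\otimes\cdots\otimes q(c_n))=c_0\cdot pf(q(c_1)\otimes\cdots\otimes q(c_n)),
\]
and symmetrically at the other end. I do not anticipate a genuine obstacle; the whole content of the lemma is the algebraicity of $q$ noted in the first step, after which it is pure bookkeeping with the Hochschild coboundary formula.
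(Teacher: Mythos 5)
Your proposal is correct and follows essentially the same route as the paper's own proof: evaluate both sides on a pure tensor, match the middle terms using the fact that $q$ is an algebra morphism, and match the boundary terms using that $p$ is an algebra morphism with $pq=\id_C$. The only cosmetic difference is that the paper treats the degenerate case $n=0$ separately, which your argument also covers once the empty-tensor convention is spelled out.
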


\begin{proof}
If $n=0,$ then $f\in \Hom_k(k,B)$ may be identified with an element in $B$. For any $c\in C$ we have
\begin{align*}
b_C(p(f))(c)=p(f)c-cp(f)=p(f)pq(c)-pq(c)p(f)=p(fq(c)-q(c)f)=p(b_B(f)(q(c)))=(pb_B(f)q)(c)
\end{align*} using the facts that $pq=\id_C$  and that $p$ is an algebra morphism.

Now assume that $n>0.$ For any $c_0\otimes\cdots\otimes c_n\in C^{\otimes(n+1)}$, using similar arguments as well as the fact that $q$ is also an algebra morphism, we have 
\begin{align*}
b_C^{n+1}(pfq^{\otimes n})(c_0\otimes\cdots\otimes c_n)&=c_0\, pfq^{\otimes n}(c_1\otimes\cdots\otimes  c_n)+\sum_{i=1}^{n}(-1)^ipfq^{\otimes n}(c_0\otimes\cdots\otimes c_{i-1}c_i\otimes\cdots\otimes c_n)\\
&\quad+(-1)^{n+1}pfq^{\otimes n}(c_0\otimes\cdots\otimes c_{n-1})\, c_n\\
&=pq(c_0)\, pf(q(c_1)\otimes\cdots\otimes  q(c_n))\\
&\quad+\sum_{i=1}^{n}(-1)^ipf(q(c_0)\otimes\cdots\otimes q(c_{i-1}c_i)\otimes\cdots\otimes q(c_n))\\
&\quad+(-1)^{n+1}pf(q(c_0)\otimes\cdots\otimes q(c_{n-1}))\, pq(c_n)\\
&=p\Big(q(c_0)\, f(q(c_1)\otimes\cdots\otimes  q(c_n))\\
&\quad+\sum_{i=1}^{n}(-1)^if(q(c_0)\otimes\cdots\otimes q(c_{i-1})q(c_i)\otimes\cdots\otimes q(c_n))\\
&\quad+(-1)^{n+1}f(q(c_0)\otimes\cdots\otimes q(c_{n-1}))\, q(c_n)\Big)\\
&=pb_B^{n+1}(f)q^{\otimes(n+1)}(c_0\otimes\cdots\otimes c_n).
\qedhere\end{align*}
\end{proof}

\begin{cor}\label{cor:definition of nth projection morphism} Let $B$ be the split extension of $C$ by $E.$ Then there exists a $k$-linear map $\varphi^n\colon \HH^n(B)\rightarrow \HH^n(C)$ given by $[f]\mapsto [pfq^{\otimes n}].$

The map $\varphi^n$ is called the \emph{$n$\tup{th} Hochschild projection morphism}. 
\end{cor}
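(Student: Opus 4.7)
The plan is to invoke Lemma~\ref{lem:prelim to definition of nth projection morphism} directly. That lemma asserts that the collection of $k$-linear maps $\Phi^n\colon\Hom_k(B^{\otimes n},B)\rightarrow \Hom_k(C^{\otimes n},C)$ given by $\Phi^n(f)=pfq^{\otimes n}$ (with $\Phi^0$ interpreted as $f\mapsto p(f)$ under the identification $\Hom_k(k,B)\cong B$) satisfies $b_C^{n+1}\Phi^n = \Phi^{n+1}b_B^{n+1}$. In other words, $\Phi^*$ is a morphism of cochain complexes from the Hochschild complex of $B$ with coefficients in $B$ to the Hochschild complex of $C$ with coefficients in $C$, and any such chain map automatically induces a linear map on cohomology in each degree.

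First I would spell this out concretely to produce $\varphi^n$. If $f$ is an $n$-cocycle, so that $b_B^{n+1}(f)=0$, then Lemma~\ref{lem:prelim to definition of nth projection morphism} yields $b_C^{n+1}(\Phi^n(f))=\Phi^{n+1}(b_B^{n+1}(f))=0$, so $\Phi^n(f)=pfq^{\otimes n}$ is a cocycle in the complex for $C$. Next, if $n\pgq 1$ and $f=b_B^n(g)$ is a coboundary with $g\in \Hom_k(B^{\otimes (n-1)},B)$, then applying the lemma in degree $n-1$ gives $\Phi^n(f)=\Phi^n(b_B^n(g))=b_C^n(\Phi^{n-1}(g))$, which is a coboundary in the complex for $C$. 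Since $\Phi^n$ is visibly $k$-linear, it therefore descends to a well-defined $k$-linear map $\varphi^n\colon \HH^n(B)\rightarrow \HH^n(C)$ given by $[f]\mapsto [pfq^{\otimes n}]$. The degree zero case requires only the cocycle half of the argument, since there are no $0$-coboundaries to worry about.

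I do not expect any real obstacle: the substantive content is entirely encoded in Lemma~\ref{lem:prelim to definition of nth projection morphism}, and this corollary is merely its translation from the chain level to cohomology via the standard observation that a morphism of cochain complexes induces a map on cohomology. The only minor point requiring care is the identification in degree zero between $\Hom_k(k,B)$ and $B$, so that $\Phi^0$ really is the map $f\mapsto p(f)$ appearing implicitly in the formula; once this is acknowledged, the uniform formula $[f]\mapsto [pfq^{\otimes n}]$ covers all $n\pgq 0$.
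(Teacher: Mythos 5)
Your proposal is correct and follows essentially the same route as the paper: both use Lemma~\ref{lem:prelim to definition of nth projection morphism} to check that $f\mapsto pfq^{\otimes n}$ sends cocycles to cocycles and coboundaries to coboundaries, hence descends to cohomology. Your explicit remark about the degree-zero identification $\Hom_k(k,B)\cong B$ is a harmless extra precaution that the paper leaves implicit.
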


\begin{proof}
Assume that $f\in\Hom_k(B^{\otimes n},B)$ is a cocycle, that is, that $b_B^{n+1}(f)=0$. Then the formula in Lemma~\ref{lem:prelim to definition of nth projection morphism} shows that $b_C^{n+1}(pfq^{\otimes n})=0$ so that $pfq^{\otimes n}$ is a cocycle.

Moreover, if $f$ is a coboundary, that is, $f=b_B^n(g)$ for some cochain $g\in\Hom_k(B^{\otimes (n-1)},B)$, then, using Lemma~\ref{lem:prelim to definition of nth projection morphism}, $pfq^{\otimes n}=pb_B^n(g)q^{\otimes n}=b_C^n(pgq^{\otimes(n-1)})$ is also a coboundary. Therefore we have a well-defined linear map $\varphi^n:\HH^n(B)\rightarrow \HH^n(C)$ which sends $[f]$ to $[pfq^{\otimes n}].$
\end{proof}

We now prove that the morphisms $\varphi^n$ induce a morphism of algebras from $\HH^*(B)$ to $\HH^*(C).$

\begin{thm} Considering $\HH^*(B)=\bigoplus_{n\pgq 0}\HH^n(B)$ and $\HH^*(C)$ as algebras with the cup-product, the maps $\varphi^n$ induce an algebra morphism $\varphi^*:\HH^*(B)\rightarrow\HH^*(C)$.
\end{thm}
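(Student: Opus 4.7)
The plan is to verify the multiplicativity of $\varphi^*$ directly at the level of cochains, which then passes to cohomology. The key observation is that although $q$ is only a $k$-linear section (not an algebra map in general —\,wait, actually $q$ \emph{is} an algebra map because $q(cc') = (cc',0) = (c,0)(c',0) = q(c)q(c')$), both $p$ and $q$ are algebra morphisms, and this is exactly what makes the cup product behave well under projection.

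First I would take cocycles $f_1 \in \Hom_k(B^{\otimes s},B)$ and $f_2 \in \Hom_k(B^{\otimes t},B)$ representing classes $\zeta_1 \in \HH^s(B)$ and $\zeta_2 \in \HH^t(B)$. By Corollary~\ref{cor:definition of nth projection morphism}, $\varphi^s(\zeta_1)$ and $\varphi^t(\zeta_2)$ are represented by $pf_1q^{\otimes s}$ and $pf_2q^{\otimes t}$, while $\varphi^{s+t}(\zeta_1 \smile \zeta_2)$ is represented by $p(f_1 \times f_2) q^{\otimes (s+t)}$. Evaluating the latter on $c_1 \otimes \cdots \otimes c_{s+t} \in C^{\otimes(s+t)}$ and using that $p$ is an algebra morphism gives
\begin{align*}
p(f_1 \times f_2) q^{\otimes(s+t)}(c_1 \otimes \cdots \otimes c_{s+t})
&= p\bigl( f_1(q(c_1)\otimes\cdots\otimes q(c_s))\cdot f_2(q(c_{s+1})\otimes\cdots\otimes q(c_{s+t}))\bigr) \\
&= (pf_1q^{\otimes s})(c_1\otimes\cdots\otimes c_s)\cdot(pf_2q^{\otimes t})(c_{s+1}\otimes\cdots\otimes c_{s+t}) \\
&= \bigl((pf_1q^{\otimes s})\times(pf_2q^{\otimes t})\bigr)(c_1\otimes\cdots\otimes c_{s+t}).
\end{align*}
Hence the identity $p(f_1 \times f_2) q^{\otimes(s+t)} = (pf_1q^{\otimes s}) \times (pf_2q^{\otimes t})$ holds already at the cochain level, and passing to cohomology classes yields $\varphi^{s+t}(\zeta_1 \smile \zeta_2) = \varphi^s(\zeta_1) \smile \varphi^t(\zeta_2)$.

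Finally, to conclude that $\varphi^*$ is an algebra morphism, I would check that the unit is preserved: the unit of $\HH^*(B)$ is the class of $1_B \in B = \Hom_k(k,B)$, and $\varphi^0([1_B]) = [p(1_B)] = [1_C]$ since $p$ is unital. The additivity/gradedness of $\varphi^*$ is automatic from the definition. There is essentially no obstacle here, because the strong compatibility between $p$ and the algebra structures of $B$ and $C$ does all the work; the only point requiring any care is keeping track that both $p$ and $q$ are algebra morphisms so that the multiplication in the middle of $f_1 \times f_2$ is preserved under projection.
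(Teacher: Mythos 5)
Your proposal is correct and is essentially the paper's own proof: the same cochain-level identity $(pf_1q^{\otimes s})\times(pf_2q^{\otimes t})=p(f_1\times f_2)q^{\otimes(s+t)}$, established by the same computation using that $p$ (and $q$) are algebra morphisms, followed by the same check that the unit is preserved.
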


\begin{proof}
Fix $\zeta_1\in\HH^s(B)$ and $\zeta_2\in\HH^t(B).$ Let $f_1\in\Hom_k(B^{\otimes s},B)$ (respectively $f_2\in\Hom_k(B^{\otimes t},B)$) be a cocycle representative of $\zeta_1$ (respectively of $\zeta_2$). Then $pf_1q^{\otimes s}$ (respectively $pf_2q^{\otimes t}$) is  a cocycle representative of $\varphi^s(\zeta_1)$ (respectively of $\varphi^t(\zeta_2)$).

Using the notation in paragraph \ref{subsubsec:structure HH}, we have 
\begin{align*}
\left((pf_1q^{\otimes s})\times (pf_2q^{\otimes t})\right)(c_1\otimes \cdots\otimes c_{s+t})&=pf_1(q(c_1)\otimes  \cdots\otimes q(c_s))pf_2(q(c_{s+1})\otimes \cdots\otimes q(c_{s+t}))\\
&=p\left(f_1(q(c_1)\otimes  \cdots\otimes q(c_s))f_2(q(c_{s+1})\otimes \cdots\otimes q(c_{s+t}))\right)\\
&=p(f_1\times f_2)(q(c_1)\otimes  \cdots\otimes q(c_{s+t})))\\
&=p(f_1\times f_2)q^{\otimes (s+t)}(c_1\otimes \cdots\otimes c_{s+t}).
\end{align*} Therefore $(pf_1q^{\otimes s})\times (pf_2q^{\otimes t})=p(f_1\times f_2)q^{\otimes(s+t)}$ and taking cohomology classes yields $\varphi^s(\zeta_1)\smile \varphi^t(\zeta_2)=\varphi^{s+t}(\zeta_1\smile\zeta_2).$ 

Finally, the identity element in $\HH^*(B)$ is $\mathbbm{1}_B$, the cohomology class of the map $u_B\in\Hom_k(k,B)$ which sends $1$ to $1$, and similarly for $\HH^*(C)$. We have 
\[ \varphi^0(\mathbbm{1}_B)=[pu_Bq^{\otimes 0}]=[u_C]=\mathbbm{1}_C.\qedhere \]
\end{proof}

\begin{remark}
In general, $\varphi^*$ is not a morphism of graded Lie algebras. Indeed, in the following example the Hochschild projection morphism $\varphi^1:\HH^1(B)\rightarrow\HH^1(C)$ is not a morphism of Lie algebras, so that $\varphi^*$ cannot be a morphism of graded Lie algebras. 
\end{remark}

\begin{example}\label{ex explicit ses}
Let $C$ be the Nakayama algebra of dimension $4$ whose quiver is \[\xymatrix@R=10pt{ 
0 \ar@/_/[rr]_{\alpha_0} &  & 1  \ar@/_5pt/[ll]_{\alpha_1}
}
\] bound by $\alpha_0\alpha_1=0=\alpha_1\alpha_0.$ Nakayama algebras have been much studied,
 and in particular their Hochschild cohomology was determined by K.~Erdmann and T.~Holm in \cite{EH}, where the algebra $C$ is denoted by $B_2^2.$

Let $B$ be the algebra of dimension $8$ whose quiver is  \[\xymatrix@R=10pt{ 
0 \ar@/_10pt/[rr]^{a_0} \ar@/_20pt/[rr]_{\bar{a}_1} &  & 1  \ar@/_10pt/[ll]^{a_1} \ar@/_20pt/[ll]_{\bar{a}_0}
}
\] bound by $a_0a_1=0,$ $a_1a_0=0$, $\bar{a}_0\bar{a}_1=0$, $\bar{a}_1\bar{a}_0=0$, $a_0\bar{a}_0=\bar{a}_1a_1$ and  $a_1\bar{a}_1=\bar{a}_0a_0.$ We shall denote the indices for the vertices and arrows in both algebras modulo $2$ so that, for instance, $a_2=a_0$.
The Hochschild cohomology of this algebra was studied  in \cite{ST}.

Now consider the morphism of algebras $p:B\rightarrow C$ determined by $p(e_i)=e_i$, $p(a_i)=\alpha_i$ and $p(\bar{a}_i)=(-1)^i\alpha_{i+1}.$ This map is surjective with corresponding section the algebra map $q:C\rightarrow B$  determined by $q(\alpha_i)=a_i$. Moreover, a straightforward computation shows that  $\ker p$ as a $k$-vector space has  basis $\{a_1-\bar{a}_0,a_0+\bar{a}_1,a_0\bar{a}_0,a_1\bar{a}_1\}$. Furthermore, using the relations in $B$ and the fact that all paths of length at least $3$ in $B$ vanish, it is immediate that $(\ker p)^2=0.$

Therefore we have a short exact sequence 
\[ 0\rightarrow E=\ker p\rightarrow B \stackrel[q]{p}\rightleftarrows C\rightarrow 0\] with $E^2=0$,  hence $B$ is isomorphic to the trivial extension $C\ltimes E$.
In fact,  $E\cong DC$ as $C$-$C$-bimodules, so that $B$ is the trivial extension  of $C$ by $DC$, the isomorphism $DC\rightarrow E$ being given by $\alpha_0^*\mapsto \bar{a}_0-a_1,$ $\alpha_1^*\mapsto \bar{a}_1+a_0$ and $e_i^*\mapsto a_i\bar{a}_i$ (where $\{e_i^*,\alpha_i^*; i=0,1\}$ denotes the dual basis of $DC$).

We now determine $\varphi^1.$ It was shown in \cite[Proposition 5.3]{EH} that $\dim_k\HH^1(C)=1$ and in \cite[Proposition 6.1]{ST} that  $\dim_k\HH^1(B)=4.$ Moreover, a basis of $\HH^1(B)$ was given in \cite[Proposition 6.2]{ST}, which in terms of derivations is the set $\{[u_0],[u_1],[v_0],[v_1]\}$ where 
\begin{enumerate}[$\triangleright$]
\item $u_0$ is the normalised derivation determined by $u_0(a_i)=a_i$ and $u_0(\bar a_i)=0$,
\item $u_1$ is the normalised  derivation determined by $u_1(\bar a_i)=(-1)^{i} a_{i+1}$ and $u_1(a_i)=0$,
\item $v_0$ is the normalised  derivation determined by $v_0(a_i)=(-1)^{i} \bar a_{i+1}$ and $v_0(\bar a_i)=0$,
\item $v_1$ is the normalised  derivation determined by $v_1(\bar a_i)=-\bar  a_{i}$ and $v_1(a_i)=0.$
\end{enumerate}

It is then easy to check that $\varphi^1([u_0])=[\xi]=-\varphi^1([v_0])$ and that $\varphi^1([u_1])=0=\varphi^1([v_1])$, where $\xi:C\rightarrow C$ is the normalised derivation determined by $\xi(\alpha_i)=\alpha_i$ for $i=1,2.$ 

Now consider the bracket $[u_0,v_0]=u_0v_0-v_0u_0.$ It sends $\bar{a}_i$ to $0$ and $a_i$ to $-v_0(a_i)$ so that $[u_0,v_0]=-v_0$. Therefore $\varphi^1([[u_0],[v_0]])=\varphi^1([[u_0,v_0]])=\varphi^1(-[v_0])=[\xi].$

On the other hand, $[\varphi^1([u_0]),\varphi^1([v_0])]=[[\xi],-[\xi]]=0.$ 

Therefore $\varphi^1([[u_0],[v_0]])\neq [\varphi^1([u_0]),\varphi^1([v_0])]$ and $\varphi^1$ is not a morphism of Lie algebras. 
\end{example}

Let $B$ be the split extension of $C$ by $E.$ We shall now compare the Hochschild cohomology groups of $B$ and $C$ by means of the Hochschild projection morphism. Applying the functor $\Hom_{B\da B}(B,-)$ to the short exact sequence of $B$-$B$-bimodules 
\[0\rightarrow E \xrightarrow{i} B \xrightarrow{p} C\rightarrow 0\]  yields a long exact cohomology sequence 
\begin{equation}\label{eq:long cohomology sequence}\begin{split}
0\rightarrow \hh^0(B,E)\rightarrow \HH^0(B)&\rightarrow \hh^0(B,C)\xrightarrow{\delta^0_B}\hh^1(B,E)\\\rightarrow \HH^1(B)&\rightarrow\hh^1(B,C)\xrightarrow{\delta^1_B} \hh^2(B,E)\rightarrow\HH^{2}(B)\rightarrow\cdots\\
\cdots&\rightarrow \hh^n(B,C)\xrightarrow{\delta^n_B} \hh^{n+1}(B,E)\rightarrow \HH^{n+1}(B)\rightarrow \hh^{n+1}(B,C)\xrightarrow{\delta^{n+1}_B}\cdots
\end{split}\end{equation}
where $\delta^n_B$ denotes the  $n$\tup{th} connecting morphism. Our first observation is that $\HH^n(C)$ embeds in $\hh^n(B,C)$ and that, if $n=0$, this embedding is an isomorphism. We begin with a lemma.

\begin{lem}\label{lemma:prelim cohom C embeds B}
Let $B$ be the split extension algebra of $C$ by a $C$-$C$-bimodule $E$.  Let $b_B^n$  (respectively $b_C^n$) be the differentials in the Hochschild complex of $B$ (respectively  $C$)  with coefficients in $C$. 
 Then, for any cochains $f\in\Hom_k(C^{\otimes n},C)$ and $g\in\Hom_k(B^{\otimes n},C)$, we have 
\[ b_B^{n+1}(fp^{\otimes n})=b_C^{n+1}(f)p^{\otimes (n+1)}\quad\text{ and }\quad b_C^{n+1}(gq^{\otimes n})=b_B^{n+1}(g)q^{\otimes (n+1)}. \]
\end{lem}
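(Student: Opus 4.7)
The plan is to adapt the computation of Lemma~\ref{lem:prelim to definition of nth projection morphism} almost verbatim; only the bimodule structures being manipulated change. The three ingredients I will need are: (i) $p\colon B\to C$ and $q\colon C\to B$ are algebra morphisms with $pq=\id_C$; (ii) the $B$-$B$-bimodule structure used on $C$ in the Hochschild complex $\Hom_k(B^{\otimes *},C)$ is defined via $p$, so that the left and right actions of $b\in B$ on $c\in C$ are $p(b)c$ and $c\,p(b)$ respectively; (iii) the ordinary $C$-$C$-bimodule structure on $C$ coincides with this one once one substitutes $b=q(c)$, since $p(q(c))=c$.

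For the first identity, I would evaluate $b_B^{n+1}(fp^{\otimes n})$ on a pure tensor $b_0\otimes \cdots \otimes b_n\in B^{\otimes(n+1)}$ directly from the Hochschild differential formula. The two boundary terms become $p(b_0)\,f(p(b_1)\otimes\cdots\otimes p(b_n))$ and $f(p(b_0)\otimes\cdots\otimes p(b_{n-1}))\,p(b_n)$ by observation (ii); the middle terms become $f(p(b_0)\otimes\cdots\otimes p(b_{i-1})p(b_i)\otimes\cdots\otimes p(b_n))$ because $p$ is an algebra map, so $p(b_{i-1}b_i)=p(b_{i-1})p(b_i)$. Factoring a $p$ out of each slot presents the result as $b_C^{n+1}(f)(p(b_0)\otimes\cdots\otimes p(b_n))$, which is exactly $b_C^{n+1}(f)\circ p^{\otimes(n+1)}$.

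For the second identity, I expand $b_C^{n+1}(gq^{\otimes n})$ on $c_0\otimes\cdots\otimes c_n\in C^{\otimes(n+1)}$ using the Hochschild differential of the $C$-$C$-bimodule $C$. The middle terms rewrite using the fact that $q$ is an algebra map; the boundary terms are handled via
\[ c_0\cdot g(q(c_1)\otimes\cdots\otimes q(c_n))=p(q(c_0))\,g(q(c_1)\otimes\cdots\otimes q(c_n))=q(c_0)\cdot g(q(c_1)\otimes\cdots\otimes q(c_n)), \]
and symmetrically on the right, which lets me replace the $C$-bimodule action on the codomain by the $B$-bimodule action on $C$ via $p$. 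Reassembling the terms gives exactly $b_B^{n+1}(g)(q(c_0)\otimes\cdots\otimes q(c_n))$, hence the second equality.

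The entire argument is routine manipulation of the Hochschild differentials and, by symmetry between $p$ and $q$, contains no genuine obstacle; the only point requiring care is keeping track of which of the two compatible bimodule structures on $C$ is being used at each stage.
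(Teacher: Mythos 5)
Your proposal is correct and follows essentially the same route as the paper's proof: a direct expansion of the Hochschild differentials, using that $p$ and $q$ are algebra morphisms with $pq=\id_C$ and that the $B$-$B$-bimodule action on $C$ is induced by $p$ (so that $c_0c_1=q(c_0)\cdot c_1$, which is exactly the identity the paper invokes for the second equality). The only cosmetic difference is that the paper treats the degenerate case $n=0$ separately, which your argument covers implicitly since the middle sum is then empty.
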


\begin{proof}
 Let us prove the first identity. For $n\pgq 1$ and for any $b_0\otimes\cdots\otimes b_n\in B^{\otimes (n+1)}$ we have
\begin{align*}
b_B^{n+1}(fp^{\otimes n})(b_0\otimes\cdots\otimes b_n)&=b_0\cdot fp^{\otimes n}(b_1\otimes\cdots\otimes  b_n)+\sum_{i=1}^{n}(-1)^ifp^{\otimes n}(b_0\otimes\cdots\otimes b_{i-1}b_i\otimes\cdots\otimes b_n)\\
&\quad+(-1)^{n+1}fp^{\otimes n}(b_0\otimes\cdots\otimes b_{n-1})\cdot b_n\\
&=p(b_0) f(p(b_1)\otimes\cdots\otimes  p(b_n))\\&\quad+\sum_{i=1}^{n}(-1)^if(p(b_0)\otimes\cdots\otimes p(b_{i-1})p(b_i)\otimes\cdots\otimes p(b_n))\\
&\quad+(-1)^{n+1}f(p(b_0)\otimes\cdots\otimes p(b_{n-1})) p(b_n)\\
&=b_C^{n+1}(f)p^{\otimes (n+1)}(b_0\otimes\cdots\otimes b_n)
\end{align*} using the facts that $p$ is an algebra morphism and  that $C$ is a $B$-$B$-bimodule via $p$. If $n=0$, then the cocycle $f$ may be viewed as an element in $C$, and for any $b_0\in B$ we have 
\[ b_B^1(f)(b_0)=b_0\cdot f-f\cdot b_0=p(b_0)f-fp(b_0)=b_C^1(f)(p(b_0)) \]
so that $b_B^1(f)=b_C^1(f)p.$
 
For the second identity,  we shall use the fact that if $c$ and $c'$ are in $C$, then $cc'=pq(c)c'=q(c)\cdot c'$ since $pq=\id_C$ and $B$ acts on $C$ via $p$. Similarly, $cc'=c\cdot q(c')$. Now take $c_0\otimes \cdots \otimes c_n\in C^{\otimes n}$ with $n\pgq 1$. Then
\begin{align*}
b_C^{n+1}(gq^{\otimes n})(c_0\otimes\cdots\otimes c_n)&=c_0 gq^{\otimes n}(c_1\otimes\cdots\otimes  c_n)+\sum_{i=1}^{n}(-1)^igq^{\otimes n}(c_0\otimes\cdots\otimes c_{i-1}c_i\otimes\cdots\otimes c_n)\\
&\quad+(-1)^{n+1}gq^{\otimes n}(c_0\otimes\cdots\otimes c_{n-1}) c_n\\
&=q(c_0)\cdot g(q(c_1)\otimes\cdots\otimes  q(c_n))\\
&\quad+\sum_{i=1}^{n}(-1)^ig(q(c_0)\otimes\cdots\otimes q(c_{i-1})q(c_i)\otimes\cdots\otimes q(c_n))\\
&\quad+(-1)^{n+1}g(q(c_0)\otimes\cdots\otimes q(c_{n-1}))\cdot q(c_n)\\
&=b_B^{n+1}(g)q^{\otimes(n+1)}(c_0\otimes\cdots\otimes c_n).
\end{align*} Here again, if $n=0$ then $g$ may be viewed as an element in $C$, and for any $c_0\in C$ we have 
\[ b_C^1(g)(c_0)=c_0g-gc_0=q(c_0)\cdot g-g\cdot q(c_0)=b_B^1(g)(q(c_0)) \] so that $b_C^1(g)=b_B^1(g)q.$
\end{proof}

\begin{lem}\label{cor:cohom C embeds B}
Let $B$ be the split extension algebra of $C$ by a $C$-$C$-bimodule $E$. Then
\begin{enumerate}[(a)]
\item There is an isomorphism $\HH^0(C)\cong\hh^0(B,C)$.
\item For every integer $n\pgq 0$, there is a monomorphism $\sigma_n:\HH^n(C)\hookrightarrow \hh^n(B,C)$ which sends $[f]$ to $[fp^{\otimes n}]$ with retraction $\nu_n:\hh^n(B,C)\twoheadrightarrow\HH^n(C)$ given by $[g]\mapsto [gq^{\otimes n}].$
\end{enumerate}
\end{lem}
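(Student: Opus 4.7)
My plan is to leverage the two cochain-level identities
\[ b_B^{n+1}(fp^{\otimes n})=b_C^{n+1}(f)p^{\otimes(n+1)}\qquad\text{and}\qquad b_C^{n+1}(gq^{\otimes n})=b_B^{n+1}(g)q^{\otimes(n+1)} \]
proved in Lemma~\ref{lemma:prelim cohom C embeds B}. The first identity tells me that precomposition with $p^{\otimes n}$ is a cochain map from the Hochschild complex of $C$ (with coefficients in $C$) to that of $B$ (with coefficients in $C$): it sends cocycles to cocycles and, applied to $h\in\Hom_k(C^{\otimes (n-1)},C)$ in the form $b_C^n(h)p^{\otimes n}=b_B^n(hp^{\otimes(n-1)})$, it sends coboundaries to coboundaries. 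The second identity shows the same for precomposition with $q^{\otimes n}$ in the opposite direction. Hence both descend to well-defined linear maps $\sigma_n:\HH^n(C)\to\hh^n(B,C)$ and $\nu_n:\hh^n(B,C)\to\HH^n(C)$ with the formulas stated.

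Next I will compute the composition on cohomology classes. For any cocycle $f\in\Hom_k(C^{\otimes n},C)$,
\[ \nu_n(\sigma_n([f]))=[fp^{\otimes n}q^{\otimes n}]=[f(pq)^{\otimes n}]=[f], \]
using only that $pq=\id_C$. Thus $\nu_n\circ\sigma_n=\id_{\HH^n(C)}$, which immediately forces $\sigma_n$ to be injective and exhibits $\nu_n$ as a retraction of $\sigma_n$. This establishes part (b).

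For part (a) I need moreover that $\sigma_0$ is surjective. Since there are no $0$-coboundaries and $\Hom_k(k,C)\cong C$, I will identify $\HH^0(C)$ with the centre $Z(C)$ and $\hh^0(B,C)$ with $\{c\in C : b\cdot c=c\cdot b\text{ for all }b\in B\}$. Because the $B$-$B$-bimodule structure of $C$ is defined via $p$, the condition $b\cdot c=c\cdot b$ for all $b\in B$ reads $p(b)c=cp(b)$ for all $b\in B$; since $p$ is surjective this is equivalent to $c\in Z(C)$. Under these identifications $\sigma_0$ is simply the identity, hence an isomorphism.

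I do not anticipate any real obstacle: the whole argument reduces, at the cochain level, to the two identities from the preceding lemma together with the section property $pq=\id_C$. The only mild care needed is the bookkeeping for $n=0$, where $\Hom_k(C^{\otimes 0},C)=\Hom_k(k,C)\cong C$ and $p^{\otimes 0}=q^{\otimes 0}=\id_k$, so that $\sigma_0$ and $\nu_0$ are just the identity on $C$ under the above identification.
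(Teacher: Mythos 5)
Your proof is correct and follows essentially the same route as the paper: part (b) is deduced from the two cochain-level identities of Lemma~\ref{lemma:prelim cohom C embeds B} together with $pq=\id_C$, and part (a) from the surjectivity of $p$ (which the paper phrases as injectivity of $\Hom_k(p,C)$ applied to the identity $b_B^1=\Hom_k(p,C)b_C^1$, while you unwind it concretely as $p(b)c=cp(b)$ for all $b$ being equivalent to $c\in Z(C)$). The two arguments for (a) are the same computation in different clothing, so no further comment is needed.
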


\begin{proof} The relations in Lemma~\ref{lemma:prelim cohom C embeds B} show that  $\sigma_n$ and $\nu_n$ are well-defined maps. Clearly, we have $\nu_n\sigma_n=\id_{\HH^n(C)}$. Therefore $\sigma_n$ is injective with  retraction $\nu_n$ and we have proved (b).

Now to prove (a), we use the identity $\Hom_{k}(p,C)b_C^1=b_B^1$ from Lemma~\ref{lemma:prelim cohom C embeds B} (for $n=0$) and the fact that $\Hom_{k}(p,C):\End_k(C)\rightarrow \Hom_k(B,C)$ is injective: 
\[ \HH^0(C)=\ker b_C^1=\ker (\Hom_{k}(p,C)b_C^1)=\ker b_B^1=\hh^0(B,C). \qedhere\]
\end{proof}

\begin{remark} In the case of cluster-tilted algebras, it was proved in \cite{AR,ABIS,ARS} that the first Hochschild projection morphism $\varphi^1:\HH^1(B)\rightarrow \HH^1(C)$ is surjective. It is therefore natural to ask whether the Hochschild projection morphism $\varphi^*:\HH^{*}(B)\rightarrow \HH^*(C)$ is surjective or not. This is not the case in general, even for trivial extensions, as the following example shows, but we shall see some important cases where it is in Section \ref{sec:trivial extensions}.
\end{remark}

\begin{example}\label{not surjective} Let $B$ be the algebra given by the quiver 
\[ \xymatrix@R=10pt@C=10pt{1\ar[rr]^\alpha\ar[dr]_\beta&&2\ar@(dr,ur)[]_\varepsilon\\&3\ar[ur]_\gamma} \] bound by $\varepsilon^2=0$ and $\alpha\varepsilon=\beta\gamma\varepsilon.$ Let $E$ be the ideal in $B$ generated by $\varepsilon$ and let $C$ be the path algebra of the quiver \[ \xymatrix@R=10pt@C=10pt{1\ar[rr]^\alpha\ar[dr]_\beta&&2\\&3\ar[ur]_\gamma} \] Then $B$ is the trivial extension of $C$ by $E$, and the maps $p:B\rightarrow C$ and $q:C\rightarrow B$ are the natural maps. We assume that the characteristic of the field $k$ is not $2.$

Since $C$ is hereditary, $\dim_k\HH^1(C)=2$ by \cite[Proposition 1.6]{H}. Moreover, $B$ can be obtained by taking the one-point extension of the one-point extension of the algebra $k[x]/(x^2)$, and therefore it follows using Theorem~\ref{thm 1.1} that $\dim_k\HH^1(B)=3.$

In order to describe $\varphi^1$, we need explicit bases of $\HH^1(C)$ and $\HH^1(B)$. 

Any normalised derivation $d:C\rightarrow C$ is entirely determined by 
\[ 
\begin{cases}
d(\alpha)=\lambda_1\alpha+\lambda_2\beta\gamma\\d(\beta)=\lambda_3\beta\\d(\gamma)=\lambda_4\gamma.
\end{cases}
 \] Moreover, the inner derivations $d_1$ and $d_2$ associated to the vertices $e_1$ and $e_2$ are determined by 
\begin{align*}
\begin{cases}
d_1(\alpha)&=-\alpha \\d_1(\beta)&=-\beta \\d_1(\gamma)&=0
\end{cases}
&\qquad \text{and}\qquad
\begin{cases}
d_2(\alpha)&=\alpha\\d_2(\beta)&=0\\d_2(\gamma)&=\gamma.
\end{cases}
\end{align*}
 It then follows that modulo inner derivations, the normalised derivation $d$ above is equivalent to the normalised derivation $d'$ determined by $d'(\alpha)=(\lambda_1-\lambda_3-\lambda_4)\alpha+\lambda_2\beta\gamma,$ $d'(\beta)=0$ and $d'(\gamma)=0.$ Since $\dim_k\HH^1(C)=2$, it follows that  a basis of $\HH^1(C)$ is given by the normalised derivations $u_1$ and $u_2$ determined by $u_1(\alpha)=\alpha$, $u_2(\alpha)=\beta\gamma$ and $u_i(\beta)=0=u_i(\gamma)$ for $i=1,2$. 

Any normalised derivation $d:B\rightarrow B$ is entirely determined by 
\[ 
\begin{cases}
d(\alpha)=\lambda_1\alpha+\lambda_2\beta\gamma+\mu_1\alpha\varepsilon\\d(\beta)=\lambda_3\beta\\d(\gamma)=\lambda_4\gamma+\mu_2\gamma\varepsilon\\d(\varepsilon)=\mu_0e_2+\mu_3\varepsilon.
\end{cases}
 \] Moreover, since $0=d(\varepsilon^2)=2\mu_0\varepsilon$ we must have $\mu_0=0$, and $d(\alpha\varepsilon-\beta\gamma\varepsilon)=0$ implies that $\lambda_1+\lambda_2-\lambda_3-\lambda_4=0$. It then follows as above that modulo inner derivations (associated to $e_1$, $e_2$ and $\varepsilon$), $d$ is equivalent  to the normalised derivation $d'$ defined by $d'(\alpha)=(\lambda_1-\lambda_3-\lambda_4)\alpha+\lambda_2\beta\gamma+(\mu_1-\mu_2)\alpha\varepsilon=\lambda_2(\alpha-\beta\gamma)+(\mu_1-\mu_2)\alpha\varepsilon$ and $d'(\varepsilon)=\mu_3\varepsilon.$ Since $\dim_k\HH^1(B)=3$, it follows that a basis of $\HH^1(B)$ is given by the normalised derivations $v_1$, $v_2$ and $v_3$ determined by $v_1(\varepsilon)=\varepsilon$, $v_2(\alpha)=\alpha-\beta\gamma$, $v_3(\alpha)=\alpha\varepsilon$ and all other arrows are sent to $0$. 

Then $v_1q=0=v_3q$ hence $\varphi^1([v_1])=0=\varphi^1([v_3])$ and therefore $\dim_k \im\varphi^1\ppq 1<\dim_k \HH^1(C)$ so that $\varphi^1$ cannot be surjective. Note that $\varphi^1([v_2])=[u_1-u_2]\neq 0.$

In particular, $\varphi^*$ is not surjective.
\end{example}

\section{Trivial extensions}\label{sec:trivial extensions}

In this section, we consider the case where $B=C\ltimes E$ is a trivial extension, that is, $E^2=0.$ Our objective is to give a necessary and sufficient condition for the Hochschild projection morphisms $\varphi^n:\HH^n(B)\rightarrow \HH^n(C)$ to be surjective. We then prove that this condition holds for the two important cases of trivial extension algebras, namely those where $E=\, _CDC_C$ is the minimal injective cogenerator bimodule, and those where $E=\,_CC_C$ is the regular bimodule. Our main concern however is with the relation bimodule $E=\Ext^2_C(DC,C)$ of a triangular algebra $C$ of global dimension two; in this case, $B$ is the relation extension of $C$, see \cite{ABS1}. We give a necessary and sufficient condition for the first Hochschild projection morphism $\varphi^1$ to be surjective, valid even in the more general case where $C$ is not assumed to be triangular of global dimension two and $E=E_m=\Ext^m_C(DC,C)$. Such trivial extensions $B=C\ltimes E_m$ are related, under some conditions, to $(m-1)$-cluster-tilted algebras, as was mentioned in the introduction.

We start with some background from \cite{CMRS}.

Let $B=C\ltimes E$ be the split extension  of $C$ by $E$. As $C$-$C$-bimodules, we have $B=E\oplus C$ and therefore $B^{\otimes n}=\bigoplus_{s+r=n}E^{s,r}$ where  $E^{s,r}$ is the subspace of $B^{\otimes(s+r)}$ generated by the tensors $x_1\otimes\cdots\otimes x_{s+r}$ with exactly $s$ of the $x_i$ in $E$ and $r$ of the $x_i$ in $C$. 

Consequently, if $X$ is any $B$-$B$-bimodule, the Hochschild complex $\Hom_k(B^{\otimes *},X)$ organises into a double complex $\Hom_k(E^{*,*},X)$, and the Hochschild differential $b_X^{n+1}$ decomposes into  horizontal differentials $d_h^{s+1,r}:\Hom_k(E^{s,r},X)\rightarrow \Hom_k(E^{s+1,r},X)$ and  vertical differentials $d_v^{s,r+1}:\Hom_k(E^{s,r},X)\rightarrow\Hom_k(E^{s,r+1},X)$ where $s$  and $r$ are non-negative integers such that $s+r=n$, that is, $b_X^{n+1}=\sum_{s+r=n}(d_h^{s+1,r}+d_v^{s,r+1})$. This can be illustrated in the following diagram.
{\tiny\[ \xymatrix{\Hom_k(E^{0,n+1},X)\ar@{.}[rd]|{\bigoplus}\\
\Hom_k(E^{0,n},X)\ar[u]_{d_v^{0,n+1}}\ar[r]_{d_h^{1,n}}\ar@{.}[rd]|{\bigoplus}&\Hom_k(E^{1,n},X)\ar@{--}[rrdd]|{\bigoplus}\\
&\Hom_k(E^{1,n-1},X)\ar[u]_{d_v^{1,n}}\ar@{--}[rrdd]|{\bigoplus}\\
&&&\Hom_k(E^{s,r+1},X)\ar@{.}[rd]|{\bigoplus}\\
&&&\Hom_k(E^{s,r},X)\ar[u]_{d_v^{s,r+1}}\ar[r]_{d_h^{s+1,r}}\ar@{--}[rd]|{\bigoplus}&\Hom_k(E^{s+1,r},X)\ar@{--}[rd]|{\bigoplus}\\
&&&&\Hom_k(E^{n,0},X)\ar[r]_{d_h^{n+1,0}}&\Hom_k(E^{n+1,0},X)} \]}

We shall denote the $s$\tup{th} column in this double complex (with vertical differential) by $\calc^s(X).$   

The next proposition was proved in \cite{CMRS}.

\begin{prop}[\cite{CMRS}]\label{prop:results CMRS} The following results hold for the split extension $B$ of $C$ by $E$:
\begin{enumerate}[(a)]
\item\label{cmrs:col0} $\hh^n(\calc^0(X))=\Ext_{C\da C}^n(C,X)=\hh^n(C,X)$. 
\item\label{cmrs:col1} $\hh^n(\calc^1(X))=\Ext_{C\da C}^n(E,X)$.
\item\label{cmrs:colm} $\hh^0(\calc^s(X))=\Hom_{C\da C}(E^{\otimes_C s},X)$.
\end{enumerate}
If moreover $B=C\ltimes E$ is the trivial extension of $C$ by $E$ and if $X$ satisfies $XE=0=EX$, then:
\begin{enumerate}[(a),resume]
\item\label{cmrs:horizontal diff} the horizontal differential $d_h$ is zero;
\item\label{cmrs:HHn decomposition} $\hh^n(B,X)=\bigoplus_{s+r=n}\hh^r(\calc^s(X))$;
\item\label{cmrs:connecting morphism} the connecting morphism $\delta_B^n:\hh^n(B,C)\rightarrow\hh^{n+1}(B,E)$ decomposes into $\delta_B^n=\bigoplus_{s+r=n} \delta^{s,r}$ where $\delta^{s,r}:\hh^r(\calc^s(C))\rightarrow\hh^r(\calc^{s+1}(E))$. 

Moreover, $\delta^{s,r}([\zeta])=[\id_E\smile\zeta]+(-1)^{s+r+1}[\zeta\smile\id_E]$ where, if $f:B^{\otimes s}\rightarrow C$ and $g:B^{\otimes r}\rightarrow C$, $f\smile g:B^{\otimes(s+r)}\xrightarrow{f\otimes g} C\otimes C\rightarrow C\otimes_B C$. 
\end{enumerate}
\end{prop}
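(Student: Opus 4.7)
The plan is to read off each assertion from the structure of the Hochschild bicomplex attached to the decomposition $B=C\oplus E$. First I set up the bigrading: from $B^{\otimes n}=\bigoplus_{s+r=n} E^{s,r}$ I obtain dually $\Hom_k(B^{\otimes n},X)=\bigoplus_{s+r=n}\Hom_k(E^{s,r},X)$, and I expand the Hochschild differential $b_X^{n+1}(f)(b_0\otimes\cdots\otimes b_n)$ summand by summand, sorting each contribution according to whether it raises the $E$-count by one (yielding $d_h$) or the $C$-count by one (yielding $d_v$). The two families anticommute by inspection, producing a bicomplex whose total differential is $b_X^{n+1}$.

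For (a), the column $\calc^0(X)$ has $r$\tup{th} term $\Hom_k(C^{\otimes r},X)$ and the induced vertical differential is precisely the standard Hochschild differential of $C$ with coefficients in $X$, so $\hh^n(\calc^0(X))=\hh^n(C,X)=\Ext_{C\da C}^n(C,X)$. For (b), the $r$\tup{th} term of $\calc^1(X)$ is $\bigoplus_{i+j=r}\Hom_k(C^{\otimes i}\otimes E\otimes C^{\otimes j},X)$, which I recognise as $\Hom_{C\da C}(P_r,X)$ for the standard $C$-bimodule bar resolution $P_\bullet\to E$ whose degree-$r$ term is $\bigoplus_{i+j=r} C\otimes C^{\otimes i}\otimes E\otimes C^{\otimes j}\otimes C$; checking that the vertical differential agrees with the bar differential of this resolution yields $\Ext_{C\da C}^n(E,X)$. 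For (c), a $0$-cocycle in $\calc^s(X)$ is a $k$-linear map $f\colon E^{\otimes_k s}\to X$ whose vertical differential vanishes, and expanding the differential shows that this condition is equivalent to $C$-balancing together with left- and right-$C$-linearity, so $f$ factors through $E^{\otimes_C s}$ as a $C$-bimodule map.

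To prove (d), I return to $b_X^{n+1}(f)$ for $f\in\Hom_k(E^{s,r},X)$ and ask which summands land in bidegree $(s+1,r)$. The outer terms $b_0\cdot f(\cdots)$ and $f(\cdots)\cdot b_n$ can only contribute when $b_0$ or $b_n$ lies in $E$, and their values then lie in $E\cdot X$ or $X\cdot E$ and vanish by the hypothesis $EX=XE=0$. An interior term $f(\cdots b_{i-1}b_i\cdots)$ lands in bidegree $(s+1,r)$ only if the merger raises the $E$-count by one without changing the $C$-count, which forces $b_{i-1},b_i\in E$; but then $b_{i-1}b_i\in E^2=0$ since $B$ is trivial. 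Hence $d_h=0$. Part (e) follows at once: with $d_h=0$, the total differential agrees with $d_v$ on each column, so both cocycles and coboundaries of the total complex split by bidegree, giving $\hh^n(B,X)=\bigoplus_{s+r=n}\hh^r(\calc^s(X))$.

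For (f), I compute the connecting morphism of the long exact sequence attached to $0\to E\to B\to C\to 0$ by the snake-lemma recipe: lift a cocycle $\zeta\in\Hom_k(B^{\otimes n},C)$ through the section $q$ to $q\zeta\in\Hom_k(B^{\otimes n},B)$, apply $b_B^{n+1}$, and record the result as an element of $\Hom_k(B^{\otimes(n+1)},E)$. Decomposing by bigrading yields the block-diagonal form $\delta_B^n=\bigoplus_{s+r=n}\delta^{s,r}$, with $\delta^{s,r}$ computed by projecting $b_B^{n+1}(q\zeta)$ onto bidegree $(s+1,r)$. A term-by-term analysis mirroring the one in (d) shows that all interior mergers either vanish (via $E^2=0$) or land in a different bidegree, leaving only the two outer terms $b_0\cdot(q\zeta)(\cdots)$ and $(q\zeta)(\cdots)\cdot b_n$ whose extremal factor lies in $E$; these are precisely the cochain representatives of $\id_E\smile\zeta$ and $(-1)^{s+r+1}\zeta\smile\id_E$. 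The principal obstacle I anticipate is the sign- and identification-bookkeeping connecting the bar-level cup product with the $C\otimes_B C$-valued pairing in the statement.
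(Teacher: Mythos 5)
Your proposal is correct in substance, but it takes a genuinely different route from the paper: the paper does not prove this proposition at all, it simply quotes each item from \cite{CMRS} with precise references ((a) from p.~24, (b) from Theorem 2.2, (c) from Remark 2.3, (d)--(e) from Theorem 3.1, (f) from Proposition 3.6 and Theorem 4.1 of that paper). What you have written is essentially a reconstruction of the arguments of \cite{CMRS} from the bicomplex attached to the splitting $B=C\oplus E$, and the individual steps check out: the sorting of the Hochschild differential into $d_h$ and $d_v$; the identification of $\calc^0(X)$ with the Hochschild complex of $C$ with coefficients in $X$ viewed as a $C$-$C$-bimodule via $q$; the recognition of $\calc^1(X)$ as $\Hom_{C\da C}(P_\bullet,X)$ for the relative two-sided bar resolution $P_r=\bigoplus_{i+j=r}C\otimes C^{\otimes i}\otimes E\otimes C^{\otimes j}\otimes C$ of $E$ (here you should record that each $P_r$ is a projective bimodule and that the augmented complex is exact via the usual contracting homotopy $h(\omega)=1\otimes\omega$, since this is what justifies computing $\Ext_{C\da C}^n(E,X)$ from it); the computation that a $0$-cocycle in $\calc^s(X)$ is exactly a $C$-balanced, $C$-bilinear map, hence factors through $E^{\otimes_C s}$; the vanishing of $d_h$ from $E^2=0$ and $EX=0=XE$; and the snake-lemma computation $b_B^{n+1}(q\zeta)(b_0\otimes\cdots\otimes b_n)=x_0\cdot\zeta(b_1\otimes\cdots\otimes b_n)+(-1)^{n+1}\zeta(b_0\otimes\cdots\otimes b_{n-1})\cdot x_n$ with $x_0,x_n$ the $E$-components of $b_0,b_n$, which yields the stated block decomposition of $\delta_B^n$ and the formula for $\delta^{s,r}$ up to the sign and $C\otimes_BC$ bookkeeping you flag. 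What your approach buys is a self-contained treatment; what the paper's approach buys is brevity and an exact pointer to where each assertion (including the sign conventions in (f)) is established in the literature.
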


\begin{proof}
\ref{cmrs:col0} is in \cite[p. 24]{CMRS}, \ref{cmrs:col1} is in   \cite[Theorem 2.2]{CMRS}, \ref{cmrs:colm} is in \cite[Remark 2.3]{CMRS}, \ref{cmrs:horizontal diff} and  \ref{cmrs:HHn decomposition} follow from \cite[Theorem 3.1]{CMRS} and \ref{cmrs:connecting morphism} from \cite[Proposition 3.6 and Theorem 4.1]{CMRS}.
\end{proof}

\begin{cor} Let $B$ be the trivial extension of $C$ by $E.$ We have a decomposition \[\hh^n(B,C)\cong \bigoplus_{s+r=n}\hh^r(\calc^s(C))=\HH^n(C)\oplus\bigoplus_{s+r=n, \ s>0}\hh^r(\calc^s(C)).\]
\end{cor}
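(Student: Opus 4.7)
The plan is to apply Proposition~\ref{prop:results CMRS} directly, since the corollary is essentially the assembly of parts \ref{cmrs:col0} and \ref{cmrs:HHn decomposition} of that proposition for the specific coefficient bimodule $X=C$.

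First I would verify the hypothesis of parts \ref{cmrs:horizontal diff}, \ref{cmrs:HHn decomposition} and \ref{cmrs:connecting morphism} of Proposition~\ref{prop:results CMRS}, namely that $XE=0=EX$ when $X=C$. Recall that $C$ becomes a $B$-$B$-bimodule via the algebra projection $p\colon B\twoheadrightarrow C$. For any $e\in E\subseteq B$ and any $c\in C$, we have $e\cdot c = p(e)\, c = 0$ and $c\cdot e = c\, p(e) = 0$ because $p$ vanishes on $E=\ker p$. Hence $EC=0=CE$, and the hypothesis is satisfied.

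Next, by Proposition~\ref{prop:results CMRS}\ref{cmrs:HHn decomposition} applied to $X=C$, we immediately obtain
\[ \hh^n(B,C)\cong \bigoplus_{s+r=n}\hh^r(\calc^s(C)). \]
Splitting off the $s=0$ term gives
\[ \hh^n(B,C)\cong \hh^n(\calc^0(C))\oplus \bigoplus_{\substack{s+r=n\\ s>0}}\hh^r(\calc^s(C)). \]
Finally, Proposition~\ref{prop:results CMRS}\ref{cmrs:col0} with $X=C$ identifies the $s=0$ summand as $\hh^n(\calc^0(C))=\hh^n(C,C)=\HH^n(C)$, which yields the stated decomposition.

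There is no substantial obstacle here; the content of the corollary is entirely contained in the cited proposition, and the only thing to check is the triviality of the $B$-actions of $E$ on $C$ through $p$, which is immediate from $p(E)=0$.
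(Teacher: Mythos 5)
Your proposal is correct and follows the same route as the paper, which simply invokes Proposition~\ref{prop:results CMRS}\ref{cmrs:HHn decomposition} with $X=q(C)\cong C$; your explicit check that $EC=0=CE$ via $p(E)=0$ and the identification of the $s=0$ summand through part \ref{cmrs:col0} just make the paper's one-line argument fully explicit.
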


\begin{proof} Apply  Proposition~\ref{prop:results CMRS}\ref{cmrs:HHn decomposition} with $X=q(C)\cong C$.
\end{proof}

We now give the connection with the Hochschild projection morphisms.

\begin{lem}\label{lemma:phin reinterpreted} \sloppy Let $\psi_n$ be the isomorphism $\hh^n(B,C)\rightarrow\bigoplus_{s+r=n}\hh^r(\calc^s(C))$ and let $\pi_{0,n}:\bigoplus_{s+r=n}\hh^r(\calc^s(C))\rightarrow \HH^n(C)$ be the natural projection. 

Then $\pi_{0,n}\psi_n=\nu_n$ is the retraction in Lemma~\ref{cor:cohom C embeds B}, and if $\HH^n(p):\HH^n(B)\rightarrow\hh^n(B,C)$ is the morphism induced on the Hochschild cohomology by $p$, then $\pi_{0,n}\psi_n\HH^n(p)=\varphi^n$ is the Hochschild projection morphism.
\end{lem}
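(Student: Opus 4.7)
The plan is to unwind all three maps at the level of cochains and then compare them. The isomorphism $\psi_n$ arises from Proposition~\ref{prop:results CMRS}\ref{cmrs:horizontal diff}: since the $B$\da $B$-bimodule structure on $C$ is given through $p$, we have $CE = 0 = EC$, hence the horizontal differentials of the double complex $\Hom_k(E^{*,*},C)$ vanish and its total complex splits as the direct sum of the (shifted) column complexes $\calc^s(C)$. At the cochain level, this splitting is nothing but the canonical decomposition
\[ \Hom_k(B^{\otimes n},C) = \bigoplus_{s+r=n} \Hom_k(E^{s,r},C) \]
obtained from $B^{\otimes n}=\bigoplus_{s+r=n} E^{s,r}$ by restricting a cochain $f\colon B^{\otimes n}\rightarrow C$ to each summand $E^{s,r}$.

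Next, I plan to identify the $s=0$ column with $\HH^n(C)$. By Proposition~\ref{prop:results CMRS}\ref{cmrs:col0}, $\hh^n(\calc^0(C)) = \hh^n(C,C) = \HH^n(C)$, and the component $E^{0,n}$ is exactly $C^{\otimes n}$. Since $q^{\otimes n}\colon C^{\otimes n}\rightarrow B^{\otimes n}$ is precisely the inclusion of this summand, restricting $f$ to $E^{0,n}$ is the same as pre-composing with $q^{\otimes n}$. Therefore the composite $\pi_{0,n}\psi_n$ sends a cohomology class $[f]\in\hh^n(B,C)$ to $[fq^{\otimes n}]$, which is exactly the formula defining the retraction $\nu_n$ in Lemma~\ref{cor:cohom C embeds B}. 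This gives the first identity $\pi_{0,n}\psi_n = \nu_n$.

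For the second identity, I would recall that $\HH^n(p)$ is induced on Hochschild cohomology by post-composition with the algebra morphism $p\colon B\rightarrow C$; that is, $\HH^n(p)([f]) = [pf]$ for every cocycle $f\in \Hom_k(B^{\otimes n},B)$. Combining this with the first identity yields
\[ \pi_{0,n}\psi_n \HH^n(p)([f]) = \nu_n([pf]) = [pfq^{\otimes n}] = \varphi^n([f]), \]
where the last equality is the definition of $\varphi^n$ in Corollary~\ref{cor:definition of nth projection morphism}. This establishes the second identity.

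The only real subtlety in the argument is the identification of the abstract isomorphism $\psi_n$ coming from the vanishing of the horizontal differentials with the concrete cochain-level restriction map onto the pieces $\Hom_k(E^{s,r},C)$; once this identification is made explicit, both assertions reduce to an immediate bookkeeping check against the definitions of $\nu_n$ and $\varphi^n$.
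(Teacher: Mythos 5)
Your proposal is correct and follows essentially the same route as the paper: both identify $\psi_n$ at the cochain level with the decomposition $\Hom_k(B^{\otimes n},C)\cong\bigoplus_{s+r=n}\Hom_k(E^{s,r},C)$ induced by $(q,i)\colon C\oplus E\to B$, observe that projection onto the $s=0$ component is precomposition with $q^{\otimes n}$, and then compose with $\HH^n(p)\colon [f]\mapsto[pf]$ to recover $\varphi^n$. The explicit remark that $q^{\otimes n}$ is the inclusion of the summand $E^{0,n}=C^{\otimes n}$ is exactly the identification the paper relies on.
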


\begin{proof}
The decomposition $\hh^n(B,C)\cong \bigoplus_{s+r=n}\hh^r(\calc^s(C))$ arises from the isomorphism $(q,i):C\oplus E\rightarrow B$ of $k$-vector spaces which induces an isomorphism \[\Hom_k(B^{\otimes n},C)\xrightarrow\cong  \Hom_k(C^{\otimes n},C)\oplus\bigoplus_{s+r=n, \ s>0}\Hom_k(E^{s,r},C)\] on the level of cochains, therefore the projection onto the first component is  given by $[f]\mapsto [fq^{\otimes n}]=\nu_n([f])$. 

Moreover, if $f\in\Hom_k(B^{\otimes n},B)$ then $\nu_n\HH^n(p)([f])=\nu_n([pf])=[pfq^{\otimes n}]=\varphi^n([f]),$ which concludes the proof.
\end{proof}

\begin{remark}\label{rk: description projection on cale E}\sloppy
In the same way, we can prove that if $\pi_{n,n}:\bigoplus_{s+r=n}\hh^r(\calc^s(C))\rightarrow H^0(\calc^n(C))=\Hom_{C\da C}(E^{\otimes_Cn},C)$ is the projection onto the last component, then $\pi_{n,n}\psi_n:\hh^n(B,C)\rightarrow\Hom_{C\da C}(E^{\otimes_Cn},C)$  is given by $[f]\mapsto fi^{\otimes n}$, where $i:E\rightarrow B$ is the embedding, and $\pi_{n,n}\psi_n\HH^n(p):\HH^n(B)\rightarrow \Hom_{C\da C}(E^{\otimes_Cn},C)$ is given by $[f]\mapsto pfi^{\otimes n}$.
\end{remark}

We shall now study the surjectivity of $\varphi^*.$ We keep the notation above.

\begin{prop}\label{prop: SC for surjectivity}
 For any $n\pgq0,$ the $n$\tup{th} Hochschild projection morphism $\varphi^n:\HH^n(B)\rightarrow \HH^n(C)$ is surjective if, and only if, $\delta^{0,n}=0$.
\end{prop}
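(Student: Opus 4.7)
The plan is to reinterpret $\varphi^n$ via the decomposition provided by Proposition~\ref{prop:results CMRS} and read off its image as $\ker\delta^{0,n}$ inside $\HH^n(C)$.

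First I would recall the factorisation established in Lemma~\ref{lemma:phin reinterpreted}, namely
\[\varphi^n=\pi_{0,n}\circ\psi_n\circ\HH^n(p),\]
where $\psi_n$ is the isomorphism $\hh^n(B,C)\xrightarrow{\cong}\bigoplus_{s+r=n}\hh^r(\calc^s(C))$ and $\pi_{0,n}$ projects onto the $(s,r)=(0,n)$ summand, which by Proposition~\ref{prop:results CMRS}\ref{cmrs:col0} is exactly $\HH^n(C)$. Since $\psi_n$ is an isomorphism and $\pi_{0,n}$ is a split surjection, one has
\[\im\varphi^n=\pi_{0,n}\bigl(\psi_n(\im\HH^n(p))\bigr).\]

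Next I would use the long exact sequence \eqref{eq:long cohomology sequence}, which gives $\im\HH^n(p)=\ker\delta^n_B$. Because $B=C\ltimes E$ is a trivial extension and $E^2=0$, the hypotheses $XE=0=EX$ required in Proposition~\ref{prop:results CMRS}\ref{cmrs:connecting morphism} are satisfied for $X=E$, so $\hh^{n+1}(B,E)$ decomposes as $\bigoplus_{s+r=n+1}\hh^r(\calc^s(E))$ and
\[\delta^n_B=\bigoplus_{s+r=n}\delta^{s,r},\qquad \delta^{s,r}\colon\hh^r(\calc^s(C))\longrightarrow\hh^r(\calc^{s+1}(E)).\]
Since the various $\delta^{s,r}$ land in distinct summands of $\hh^{n+1}(B,E)$, the kernel of their direct sum is the direct sum of their kernels, and hence
\[\psi_n(\ker\delta^n_B)=\bigoplus_{s+r=n}\ker\delta^{s,r}.\]

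Finally I would apply $\pi_{0,n}$ to this direct sum: only the summand with $s=0$ survives, giving
\[\im\varphi^n=\ker\delta^{0,n}\subseteq \hh^n(\calc^0(C))=\HH^n(C).\]
Therefore $\varphi^n$ is surjective if and only if $\ker\delta^{0,n}=\HH^n(C)$, i.e.\ if and only if $\delta^{0,n}=0$, which is the desired equivalence. There is no serious obstacle here; the only delicate point is keeping track of the fact that the summands indexed by different $s$ map to different summands of $\hh^{n+1}(B,E)$, which is precisely what lets us identify $\im\varphi^n$ with a single kernel rather than with a more complicated intersection.
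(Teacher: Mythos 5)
Your proof is correct and follows essentially the same route as the paper: factor $\varphi^n$ as $\pi_{0,n}\psi_n\HH^n(p)$ via Lemma~\ref{lemma:phin reinterpreted}, identify $\im\HH^n(p)$ with $\ker\delta^n_B$ from the long exact sequence, and use the direct-sum decomposition $\delta^n_B=\bigoplus_{s+r=n}\delta^{s,r}$ to conclude that $\im\varphi^n=\ker\delta^{0,n}$. If anything, you make explicit the one point the paper leaves implicit, namely that the kernel of the direct sum of the $\delta^{s,r}$ is the direct sum of their kernels, which is what justifies the ``only if'' direction.
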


\begin{proof} The long exact cohomology sequence \eqref{eq:long cohomology sequence} on page \pageref{eq:long cohomology sequence} combined with the decomposition in Lemma~\ref{lemma:phin reinterpreted} yield an exact sequence 
\[ \HH^n(B)\xrightarrow{\psi_n\HH^n(p)}\HH^n(C)\oplus\bigoplus_{s+r=n,\ s>0}\hh^r(\calc^s(C)) \xrightarrow{\delta_B^n\psi_n^{-1}} \hh^{n+1}(B,E).\]
Therefore $\varphi^n=\pi_{0,n}\psi_n\HH^n(p)$ is surjective if, and only if, $\HH^n(C)\subset \im\psi_n\HH^n(p)=\ker\delta_B^n\psi_n^{-1} $ which is equivalent to $\delta^{0,n}=(\delta_B^n\psi_n^{-1})_{|\HH^n(C)}=0.$
\end{proof}

We shall say that a $C$-$C$-bimodule $E$ is \emph{symmetric over} $Z(C)$ if it is symmetric when viewed as a bimodule over the centre $Z(C)$ of $C,$ that is, for any $z\in Z(C)$ and any $x\in E$ we have $zx=xz.$
For instance, if $C$ is a triangular algebra (that is, $C$ has an acyclic quiver), then $Z(C)\cong k$ and any $C$-$C$-bimodule is symmetric over $Z(C).$

\renewcommand{\theequation}{C\arabic{equation}}
\begin{cor}\label{cor:technical condition} 
\begin{enumerate}[(a)]
\item The Hochschild projection morphism $\varphi^0$ is surjective if, and only if, $E$ is symmetric over $Z(C)$.
\item  For $n\pgq 1$, the Hochschild projection morphism $\varphi^n$ is surjective if, and only if, for any Hochschild cocycle $\zeta\in\Hom_k(C^{\otimes n},C)$, there exists a morphism $\alpha\in\Hom_k(E^{1,n-1},E)$ that satisfies the following three conditions for all $\theta\in E$ and all $\ul{c}=c_1\otimes\cdots\otimes c_n\in C^{\otimes n}$:
\begin{align}
\begin{split}\label{eq:c1}\theta\cdot \zeta(\ul{c})&=-\alpha(\theta\cdot c_1\otimes c_2\otimes \cdots \otimes c_n)+\sum_{i=1}^{n-1}(-1)^{i+1}\alpha(\theta\otimes c_1\otimes \cdots\otimes c_ic_{i+1}\otimes \cdots \otimes c_n)\\&\qquad+(-1)^{n+1}\alpha(\theta\otimes c_1\otimes \cdots \otimes c_{n-1})\cdot c_n
\end{split}\\
\begin{split}\label{eq:c2}(-1)^{n+1}\zeta(\ul{c})\cdot\theta &=c_1\cdot\alpha(c_2\otimes c_3\otimes \cdots \otimes c_n\otimes \theta)+\sum_{i=1}^{n-1}(-1)^{i}\alpha(c_1\otimes \cdots\otimes c_ic_{i+1}\otimes \cdots \otimes c_n\otimes \theta)\\&\qquad+(-1)^{n}\alpha(c_1\otimes \cdots \otimes c_{n-1}\otimes c_n\cdot \theta)
\end{split}\\
\begin{split}\label{eq:c3}0&=d_v\alpha(c_1\otimes\cdots\otimes c_i\otimes \theta\otimes c_{i+1}\otimes\cdots\otimes c_n)\quad\text{for all } i=1,2,\ldots,n-1.
\end{split}
\end{align}
\end{enumerate}
\end{cor}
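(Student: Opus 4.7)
The plan is to apply Proposition~\ref{prop: SC for surjectivity}, which reduces the surjectivity of $\varphi^n$ to the vanishing of the component $\delta^{0,n}\colon\HH^n(C)\to\hh^n(\calc^1(E))$, and then to unpack $\delta^{0,n}=0$ using the explicit cup-product formula in Proposition~\ref{prop:results CMRS}\ref{cmrs:connecting morphism}. Since $\id_E$ sees only elements of $E$, the cup product $\id_E\smile\zeta$ is supported on tensors whose first factor lies in $E$, in which case it equals $\theta\cdot\zeta(c_1\otimes\cdots\otimes c_n)$; similarly $\zeta\smile\id_E$ is supported on tensors whose last factor lies in $E$, giving $\zeta(c_1\otimes\cdots\otimes c_n)\cdot\theta$. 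This immediately puts the representative cochain of $\delta^{0,n}([\zeta])$ into a form where we can evaluate on a generic element of $E^{1,n}$ according to the position of the unique $E$-factor.

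For part (a), with $n=0$, I would identify $\HH^0(C)$ with $Z(C)$ and, via Proposition~\ref{prop:results CMRS}\ref{cmrs:colm}, $\hh^0(\calc^1(E))$ with $\Hom_{C\da C}(E,E)$. There are no non-zero coboundaries at this level, so $\delta^{0,0}([\zeta])=0$ means that the $C$-$C$-bimodule map $\theta\mapsto \theta\zeta-\zeta\theta$ is identically zero, and requiring this for every $\zeta\in Z(C)$ is exactly the condition that $E$ be symmetric over $Z(C)$.

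For part (b), $\delta^{0,n}([\zeta])=0$ in $\hh^n(\calc^1(E))$ is equivalent to the existence of $\alpha\in\Hom_k(E^{1,n-1},E)$ whose vertical differential $d_v\alpha$ equals the cochain representative of $\id_E\smile\zeta+(-1)^{n+1}\zeta\smile\id_E$ in $\Hom_k(E^{1,n},E)$. I would evaluate both sides at $c_1\otimes\cdots\otimes c_i\otimes\theta\otimes c_{i+1}\otimes\cdots\otimes c_n\in E^{1,n}$, splitting into the three cases $i=0$, $i=n$ and $0<i<n$. Using the vanishing of the horizontal differential (Proposition~\ref{prop:results CMRS}\ref{cmrs:horizontal diff}) together with $E^2=0$, the Hochschild formula for $d_v\alpha$ specialises in each case: the terms of the differential in which $\theta$ is multiplied with a neighbouring $C$-factor produce the $\alpha(\ldots\theta c\ldots)$-type summands, whereas the extreme terms $x_0\,\alpha(\ldots)$ or $\alpha(\ldots)\,x_n$ whose remaining input would leave $E^{1,n-1}$ simply drop out. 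By the first paragraph the right-hand side equals $\theta\cdot\zeta(\ul{c})$ when $i=0$, $(-1)^{n+1}\zeta(\ul{c})\cdot\theta$ when $i=n$, and $0$ when $0<i<n$, producing respectively \eqref{eq:c1}, \eqref{eq:c2} and \eqref{eq:c3}.

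The main obstacle will be sign bookkeeping: one must track the $(-1)^j$ coming from the Hochschild differential across each of the $n+2$ positions in the expansion of $d_v\alpha$, identify which contributions survive and which disappear because their input has no $E$-factor left, and verify that the asymmetric signs appearing in (C1) and (C2) correspond to their separate origins in $\id_E\smile\zeta$ and the shifted $(-1)^{n+1}\zeta\smile\id_E$. Once this bookkeeping is dispatched, the three equations (C1)--(C3) assemble exactly into the identity $d_v\alpha=\id_E\smile\zeta+(-1)^{n+1}\zeta\smile\id_E$, and the equivalence follows.
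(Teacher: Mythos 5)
Your proposal is correct and follows essentially the same route as the paper: part (b) is exactly the paper's argument (reduce to $\delta^{0,n}=0$ via Proposition~\ref{prop: SC for surjectivity}, note $d_h=0$ so the class of $f_\zeta=\id_E\smile\zeta+(-1)^{n+1}\zeta\smile\id_E$ vanishes iff $f_\zeta=d_v\alpha$ for some $\alpha\in\Hom_k(E^{1,n-1},E)$, then read off \eqref{eq:c1}--\eqref{eq:c3} component by component according to the position of the $E$-factor). For part (a) the paper gives a short direct computation in $Z(B)$ first, but it also records the alternative you chose, namely $\delta^{0,0}=\delta^0_B$ together with the fact that this connecting map vanishes iff $E$ is symmetric over $Z(C)$, so your version is equally valid.
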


\begin{proof} 
\begin{enumerate}[(a)]
\item We have $\HH^0(B)=Z(B)$, $\HH^0(C)=Z(C)$, and $\varphi^0:Z(B)\rightarrow Z(C)$ is the restriction of the projection $p:B\rightarrow C$ to $Z(B)$. 
\begin{enumerate}[$\triangleright$]
\item Assume that $E$ is symmetric over $Z(C)$. If $z\in Z(C)$, then $z=p(z,0)$. Moreover, for any $(c,x)\in B=C\ltimes E$, we have 
\[(z,0)(c,x)-(c,x)(z,0)=(zc,zx)-(cz,xz)=(zc-cz,zx-xz)=0,\] therefore $(z,0)\in Z(B)$ and $z=\varphi^0(z,0)$. Hence $\varphi^0$ is surjective.
\item Assume that $\varphi^0$ is surjective. Take $z\in Z(C)$ and $x\in E.$ There exists $(c,y)\in Z(B)$ such that $z=\varphi^0(c,y)=p(c,y)=c$. Therefore $(z,y)\in Z(B)$ and we have 
\[ 0=(z,y)(0,x)-(0,x)(z,y)=(0,zx-xz) \] so that $zx=xz$. 

This is true for any $z\in Z(C)$ and any $x\in E$, therefore $E$ is symmetric over $Z(C)$.
\end{enumerate}
Note that this can also be deduced from Proposition~\ref{prop: SC for surjectivity} as follows. 
If $n=0,$ then $\delta^{0,0}=\delta^0_B:\hh^0(B,C)\rightarrow\hh^0(B,E)$. Moreover, by 
 \cite[Proposition 3.3]{CMRS}, the connecting morphism $\delta^0_B$ vanishes if, and only if, $E$ is symmetric over $Z(C)$. Claim (a) now follows from Proposition~\ref{prop: SC for surjectivity}.

\item Now assume that $n\pgq 1.$ The map $\delta^{0,n}$ vanishes if, and only if, for any cocycle $\zeta\in\Hom_k(C^{\otimes n},C)$, the map $f_\zeta=\id_E\smile\zeta+(-1)^{n+1}\zeta\smile\id_E$ representing  $\delta^{0,n}([\zeta])$ is a coboundary. Since the horizontal differential $d_h$ vanishes, we must prove that $f_\zeta$ is in the image of the vertical differential. Since $f_\zeta\in\Hom_k(E^{1,n},E)$, we must find $\alpha\in\Hom_k(E^{1,n-1},E)$ such that $d_v(\alpha)=f_\zeta$. 

The formula $f_\zeta=\id_E\smile\zeta+(-1)^{n+1}\zeta\smile\id_E$ can be expressed as follows:
\begin{align*}
(f_\zeta)_{|E\otimes C^{\otimes n}}&=\id_E\smile \zeta\\
(f_\zeta)_{|C^{\otimes n}\otimes E}&=(-1)^{n+1}\zeta\smile\id_E
\end{align*} and $f_\zeta$ restricted to the other components in $E^{1,n}$ vanishes.
This translates as: for any $\theta\in E$ and any $\ul{c}=c_1\otimes \cdots \otimes c_n\in C^{\otimes n}$, 
\begin{align*}
f_\zeta(\theta\otimes \ul{c})&=\theta\cdot\zeta(\ul{c})\\
f_\zeta(\ul{c}\otimes \theta)&=\zeta(\ul{c})\cdot\theta\\
f_\zeta(c_1\otimes \cdots&\otimes c_i\otimes \theta\otimes c_{i+1}\otimes \cdots\otimes c_n)=0\text{ for $1\ppq i\ppq n-1$.}
\end{align*}
The vertical differential $d_v$ is just the Hochschild differential (co-)restricted to the spaces in a given column $\calc^s(X)$, therefore $d_v(\alpha)$ evaluated at an element in one of the components of $E^{1,n}$ is the right hand side of the appropriate equation  \eqref{eq:c1}, \eqref{eq:c2} or \eqref{eq:c3}. It follows that  $d_v(\alpha)=f_\zeta$ if and only if  $\alpha$ satisfies Conditions \eqref{eq:c1}, \eqref{eq:c2} and \eqref{eq:c3}.
\qedhere
\end{enumerate}

\end{proof}

Recall that the vector space $DC$ is a $C$-$C$-bimodule for the usual actions, that is, for any  $f\in DC$, $c\in C$ and $x\in C$, \[(f\cdot c)(x)=f(c x) \text{ and } (c\cdot f)(x)=f(xc).\]

\begin{cor} Assume that $B=C\ltimes DC$. Then for all $n\pgq 0$, the Hochschild projection morphism $\varphi^n:\HH^n(B)\rightarrow \HH^n(C)$ is surjective, so that $\varphi^*:\HH^*(B)\rightarrow \HH^*(C)$ is surjective.
\end{cor}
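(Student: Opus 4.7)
The plan is to apply Proposition~\ref{prop: SC for surjectivity} together with the explicit conditions of Corollary~\ref{cor:technical condition}, specialised to $E=DC$. Surjectivity of each $\varphi^n$ is equivalent to $\delta^{0,n}=0$, and I will verify the corresponding condition for every $n$.

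For $n=0$, by Corollary~\ref{cor:technical condition}(a), I must show that $DC$ is symmetric over $Z(C)$. For $z\in Z(C)$, $\theta\in DC$ and $c\in C$, the definitions of the bimodule actions give
\[(z\cdot\theta)(c)=\theta(cz)=\theta(zc)=(\theta\cdot z)(c)\]
since $z$ is central in $C$; hence $z\cdot\theta=\theta\cdot z$ as desired.

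For $n\pgq 1$, given any cocycle $\zeta\in\Hom_k(C^{\otimes n},C)$, I must produce $\alpha\in\Hom_k(E^{1,n-1},E)$ satisfying \eqref{eq:c1}, \eqref{eq:c2} and \eqref{eq:c3}. The construction exploits the self-duality $DC=\Hom_k(C,k)$: a $k$-linear map with values in $DC$ is the same data as a bilinear pairing with $C$. Using the decomposition $E^{1,n-1}=\bigoplus_{j=0}^{n-1} C^{\otimes j}\otimes DC\otimes C^{\otimes(n-1-j)}$, I define $\alpha$ on each summand by a cyclic-shift formula of the shape
\[\alpha(c_1\otimes\cdots\otimes c_j\otimes\theta\otimes c_{j+1}\otimes\cdots\otimes c_{n-1})(x)=\epsilon_j\,\theta\bigl(\zeta(c_{j+1}\otimes\cdots\otimes c_{n-1}\otimes x\otimes c_1\otimes\cdots\otimes c_j)\bigr),\]
with signs $\epsilon_j\in\{\pm 1\}$ to be chosen. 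The verification of \eqref{eq:c1} at the extremal position $j=0$ reduces, after pulling $\theta$ outside by linearity, to a single instance of the Hochschild cocycle identity for $\zeta$ evaluated at $(c_1,\ldots,c_n,x)$; the verification of \eqref{eq:c2} at $j=n-1$ reduces similarly to the cocycle identity evaluated at $(x,c_1,\ldots,c_n)$.

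The principal obstacle is then \eqref{eq:c3} at each interior index $i\in\{1,\ldots,n-1\}$: the vertical differential $d_v\alpha$ gathers contributions from the $\alpha_j$'s at neighbouring positions, and the resulting expression (inside $\theta(-)$) must vanish identically in $x$. This forces a compatibility constraint on the signs $\epsilon_j$ and, where the naive cyclic-shift ansatz leaves commutator-type residues of the form $[c_{i+1},\zeta(\cdots)]$, one must augment $\alpha$ at the interior positions by suitable correction terms so that the expression inside $\theta$ becomes an instance of the cocycle identity for $\zeta$ at a cyclically shifted $(n+1)$-tuple. Once the signs and the corrections are fixed (most cleanly by checking $n=2,3$ explicitly and then proceeding by induction on $i$), the verification of \eqref{eq:c3} for general $n$ is a bookkeeping exercise in rearranging the terms of the Hochschild cocycle identity. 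The surjectivity of $\varphi^*$ as an algebra morphism then follows from the fact that each $\varphi^n$ is surjective and Theorem~2.3.
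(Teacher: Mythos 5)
Your overall strategy is the same as the paper's: reduce surjectivity of $\varphi^n$ to $\delta^{0,n}=0$ via Proposition~\ref{prop: SC for surjectivity} and Corollary~\ref{cor:technical condition}, handle $n=0$ by checking that $DC$ is symmetric over $Z(C)$ (your computation there is correct), and for $n\pgq 1$ exhibit an explicit $\alpha\in\Hom_k(E^{1,n-1},E)$ of cyclic-shift type. However, there is a genuine gap at the heart of the argument: you never actually establish that such an $\alpha$ exists. You fix neither the signs $\epsilon_j$ nor the alleged ``correction terms,'' and you defer the verification of Condition~\eqref{eq:c3} — which is precisely the nontrivial content, since $d_v\alpha$ at an interior position couples the components $C^{\otimes(i-1)}\otimes DC\otimes C^{\otimes(n-i)}$ and $C^{\otimes i}\otimes DC\otimes C^{\otimes(n-1-i)}$ through the actions $c_i\cdot\theta$ and $\theta\cdot c_{i+1}$ — to an unexecuted ``bookkeeping exercise.'' Worse, your hedge that the naive cyclic shift ``leaves commutator-type residues'' requiring augmentation is misleading: no correction terms are needed. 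The pure cyclic-shift map
\[ \alpha(\ul{x}\otimes\theta\otimes\ul{y})(c)=(-1)^{n(p+1)+1}\,\theta\bigl(\zeta(\ul{y}\otimes c\otimes\ul{x})\bigr), \qquad \ul{x}\in C^{\otimes p},\ \ul{y}\in C^{\otimes q},\ p+q=n-1, \]
already satisfies \eqref{eq:c1}, \eqref{eq:c2} and \eqref{eq:c3}; this is exactly \cite[Proposition 5.9]{CMRS}, which is what the paper invokes. As written, your argument proves surjectivity only for $n=0$ (and, modulo the sign $\epsilon_0=(-1)^{n+1}$ which you leave undetermined, sketches \eqref{eq:c1} and \eqref{eq:c2}); to complete it you must either carry out the full verification of \eqref{eq:c3} with the signs above, or cite the result from \cite{CMRS}.
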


\begin{proof}
It was proved in \cite[Proposition 5.9]{CMRS} that $\delta^{0,n}=0$ for any $n\pgq 1$ in this case. If $\zeta$ is any Hochschild cocycle in $\Hom_k(C^{\otimes n},C)$, the morphism $\alpha\in\Hom_k(E^{1,n-1},E)$ satisfying relations \eqref{eq:c1}, \eqref{eq:c2} and \eqref{eq:c3} was given explicitly on each component $C^{\otimes p}\otimes DC\otimes C^{\otimes q}$ by $\alpha(\ul{x}\otimes \theta\otimes \ul{y})(c)=(-1)^{n(p+1)+1}\theta(\zeta(\ul{y}\otimes c\otimes \ul{x}))$ for all $\ul{x}\in C^{\otimes p}$, $\ul{y}\in C^{\otimes q},$ $\theta\in E=DC$ and $c\in C.$ It follows from Proposition~\ref{prop: SC for surjectivity} that for all $n\pgq 1$ the Hochschild projection morphism $\varphi^n$ is surjective.

Finally, it is easy to check that the $C$-$C$-bimodule $DC$ is symmetric over $Z(C)$, therefore $\varphi^0$ is surjective. 
\end{proof}

\begin{cor} Assume that $B=C\ltimes C$. Then for all $n\pgq 0$, the Hochschild projection morphism $\varphi^n:\HH^n(B)\rightarrow \HH^n(C)$ is surjective.
\end{cor}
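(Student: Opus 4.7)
The plan is to apply Corollary~\ref{cor:technical condition} to the $C$-$C$-bimodule $E = C$. The case $n = 0$ is immediate: $E = C$ is trivially symmetric over $Z(C)$, since $zx = xz$ for any $z \in Z(C)$ and $x \in C = E$ by the very definition of the centre, so $\varphi^0$ is surjective by part (a) of Corollary~\ref{cor:technical condition}.

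For $n \pgq 1$, given a Hochschild cocycle $\zeta \in \Hom_k(C^{\otimes n}, C)$, I use the identification $E = C$ to regard the distinguished $E$-slot $\theta$ as an ordinary $C$-slot, and define $\alpha \in \Hom_k(E^{1, n-1}, E)$ by
\[
\alpha(c_1 \otimes \cdots \otimes c_p \otimes \theta \otimes c_{p+1} \otimes \cdots \otimes c_{n-1})
= -\zeta(c_1 \otimes \cdots \otimes c_p \otimes \theta \otimes c_{p+1} \otimes \cdots \otimes c_{n-1})
\]
for each $p = 0, 1, \ldots, n-1$. With this choice, substituting $\alpha = -\zeta$ in the right-hand sides of \eqref{eq:c1}, \eqref{eq:c2} and \eqref{eq:c3} transforms each condition into the cocycle identity $b_C^{n+1}(\zeta) = 0$ evaluated respectively on the tensors $\theta \otimes c_1 \otimes \cdots \otimes c_n$, $c_1 \otimes \cdots \otimes c_n \otimes \theta$, and $c_1 \otimes \cdots \otimes c_i \otimes \theta \otimes c_{i+1} \otimes \cdots \otimes c_n$. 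The global sign $-1$ in the definition of $\alpha$ is precisely what is needed to make the outer terms $\theta\cdot \zeta(\ul c)$ and $(-1)^{n+1}\zeta(\ul c)\cdot\theta$ match up in \eqref{eq:c1} and \eqref{eq:c2}; in \eqref{eq:c3} there are no outer terms, and the only requirement is that the sign in front of $\zeta$ be the same for every $p$, which is clearly satisfied. Part (b) of Corollary~\ref{cor:technical condition} then yields the surjectivity of $\varphi^n$.

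The only obstacle is the sign bookkeeping, which is harmless once one observes that the products $c_i\theta$ and $\theta c_{i+1}$ stay inside $E = C$, so that every term of the Hochschild differential $b_C^{n+1}(\zeta)$ appears, with its correct sign, among the terms on the right-hand sides of \eqref{eq:c1}--\eqref{eq:c3}. No further idea beyond the cocycle property of $\zeta$ is required.
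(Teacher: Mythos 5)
Your proposal is correct and follows essentially the same route as the paper: the paper also takes $\alpha=-\zeta$ under the identification $E=C$ and observes that Conditions \eqref{eq:c1}--\eqref{eq:c3} become the cocycle identity $b^{n+1}(\zeta)=0$ evaluated at $\theta\otimes\ul{c}$, $\ul{c}\otimes\theta$ and the interior placements of $\theta$, respectively (the paper writes out only \eqref{eq:c1} and leaves the rest to the reader). The $n=0$ case via symmetry of $C$ over $Z(C)$ is likewise identical.
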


\begin{proof} Clearly, the bimodule $C$ is symmetric over $Z(C)$, therefore $\varphi^0$ is surjective. 

For $n\pgq 1,$ 
 it is easy to check that  if $\zeta$ is any Hochschild cocycle in $\Hom_k(C^{\otimes n},C)$, the morphism $\alpha=-\zeta\in\Hom_k(E^{1,n-1},E)=\Hom_k(C^{\otimes n},C)$ satisfies Conditions \eqref{eq:c1}, \eqref{eq:c2} and \eqref{eq:c3}. Let us for instance check \eqref{eq:c1} and leave the other conditions  to the reader. Since $b_B^{n+1}(\zeta)=0$, we have, for any $\theta\in E=C$ and any $\ul{c}=c_1\otimes \cdots\otimes c_n\in C^{\otimes n}$, 
\begin{align*}
0&=b_B^{n+1}(\zeta)(\theta\otimes \ul{c})=\theta\zeta(\ul{c})-\zeta(\theta c_1\otimes c_2\otimes\cdots\otimes c_n)\\
&\qquad-\sum_{i=1}^{n-1}(-1)^{i}\zeta(\theta\otimes c_1\otimes \cdots\otimes c_{i}c_{i+1}\otimes \cdots\otimes c_n)-(-1)^n\zeta(\theta\otimes c_1\otimes \cdots\otimes c_{n-1})c_n\\
&=\theta\zeta(\ul{c})+\alpha(\theta c_1\otimes c_2\otimes\cdots\otimes c_n)\\
&\qquad+\sum_{i=1}^{n-1}(-1)^{i}\alpha(\theta\otimes c_1\otimes \cdots\otimes c_{i}c_{i+1}\otimes \cdots\otimes c_n)+(-1)^n\alpha(\theta\otimes c_1\otimes \cdots\otimes c_{n-1})c_n
\end{align*} and \eqref{eq:c1} follows.
\end{proof}

We shall now consider the case where $E=E_m=\Ext^m_C(DC,C)$ where $m$ is a non-negative integer and $n=1$.
In this situation, $\zeta\in\Hom_k(C,C)$ is a derivation (since $n=1$). 

The space $E_m=\Ext^m_C(DC,C)$ is  naturally a $C$-$C$-bimodule, where the left action is on the target $C$ of $\theta$ and the right action is given by the left action on the component $DC$ of the argument of $\theta$. More precisely,  if $ c \in C$ and $\theta\in\Hom_k(DC\otimes C^{\otimes m},C)$ is a cocycle representing an element in $E_m$, then for all $\ul{a}\in C^{\otimes m}$ and all $f\in DC$ we have 

\setcounter{equation}{0}
\renewcommand{\theequation}{\thesection.\arabic{equation}}

\begin{equation}\label{eq:actions E}
\begin{split}
( c \cdot\theta)(f\otimes \ul{a})&= c\,  \theta  (f\otimes \ul{a}) \\
(\theta\cdot  c )(f\otimes \ul{a})&=\theta(c\cdot f \otimes \ul{a}).
\end{split}
\end{equation}

\begin{defn}
Given a derivation $\zeta\in\Hom_k(C,C)$ and an integer $m\pgq 0$, define $\alpha_m:\Hom_k(DC\otimes C^{\otimes m},C)\rightarrow \Hom_k(DC\otimes C^{\otimes m},C)$ by 
\[ \alpha_m(\theta)(f\otimes \ul{a})=\sum_{j=1}^s \theta(f\otimes a_1\otimes \cdots\otimes \zeta(a_j)\otimes \cdots\otimes a_m)-\theta(f\circ \zeta\otimes \ul{a})-\zeta(\theta(f\otimes \ul{a}))\] for all $\theta\in \Hom_k(DC\otimes C^{\otimes m},C)$, for all $\ul{a}=a_1\otimes\cdots\otimes a_m \in C^{\otimes m}$ and for all $f\in DC.$
\end{defn}

We shall prove in Corollary~\ref{cor:cocycle and coboundary} and Lemma~\ref{lemma:alphas satisfy equations} that $\alpha_m$ defines a morphism in $\Hom_k(E_m^{1,n-1},E_m)=\End_kE_m$ which satisfies Conditions \eqref{eq:c1} and \eqref{eq:c2},  Condition \eqref{eq:c3} being empty for $n=1$.

\begin{remark}
For any derivation $\zeta\in\Hom_k(C,C)$,  if $f\in DC$ and $ c \in C$, we have the following relations:
\begin{equation}\label{eq:actions compo}
\begin{split}
 c \cdot(f\circ \zeta)&=( c \cdot f)\circ \zeta+\zeta( c )\cdot f\\
(f\circ \zeta)\cdot  c &=(f\cdot c )\circ \zeta+f\cdot\zeta( c ).
\end{split} 
\end{equation} Indeed, since $\zeta$ is a derivation, for any $a\in C$ we have 
\begin{align*}
( c \cdot(f\circ \zeta))(a)=f(\zeta(a c ))&=f(\zeta(a) c +a\zeta( c ))=f(\zeta(a) c )+f(a\zeta( c ))\\
&=( c \cdot f)(\zeta(a))+(\zeta( c )\cdot f)(a)=\left(( c \cdot f)\circ \zeta+\zeta( c )\cdot f\right)(a)
\end{align*} and the proof of the second relation is similar.
\end{remark}

In the sequel,  denote by $\partial $ the differential in the complex $\Hom_{k}(DC\otimes C^{\otimes *},C)$ whose cohomology is $\Ext_C^*(DC,C)$.

\begin{lem}\label{lemma:cocycle and coboundary} For every integer $m\pgq 0$ and every $\theta\in \Hom_k(DC\otimes C^{\otimes m},C)$, we have 
\[\partial (\alpha_m(\theta))=\alpha_{m+1}(\partial \theta), \] that is, there is a commutative diagram 
\[ \xymatrix{\Hom_k(DC\otimes C^{\otimes m},C) \ar[r]^{\alpha_m}\ar[d]_{\Hom_k(\partial ,C)}  & \Hom_k(DC\otimes C^{\otimes m},C)\ar[d]^{\Hom_k(\partial ,C)} \\
\Hom_k(DC\otimes C^{\otimes (m+1)},C) \ar[r]^{\alpha_{m+1}} &\Hom_k(DC\otimes C^{\otimes (m+1)},C)}. \]
\end{lem}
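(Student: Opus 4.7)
The identity $\partial(\alpha_m(\theta))=\alpha_{m+1}(\partial\theta)$ asserts that $\alpha$ is a chain map. Conceptually, $\alpha$ is a ``Lie derivative'' on the cochain complex $\Hom_k(DC\otimes C^{\otimes *},C)$ induced by the derivation $\zeta$ of $C$, and any such operation commutes with the differential. The proof is therefore a direct (if bookkeeping-heavy) computation.

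The plan is to fix an elementary tensor $f\otimes a_0\otimes\cdots\otimes a_m\in DC\otimes C^{\otimes(m+1)}$ and expand both sides evaluated on it. The differential $\partial$ contributes three kinds of summands: a ``head'' term $\theta(f\cdot a_0\otimes a_1\otimes\cdots\otimes a_m)$, $m$ ``contraction'' terms $\theta(\cdots\otimes a_ia_{i+1}\otimes\cdots)$, and a ``tail'' term $\theta(\cdots\otimes a_{m-1})\cdot a_m$. The operator $\alpha$ itself is likewise a sum of three kinds of summands: the \emph{internal-derivation} terms $\theta(\cdots\otimes\zeta(a_j)\otimes\cdots)$, the single \emph{boundary} term $-\theta(f\circ\zeta\otimes\underline{a})$, and the single \emph{outer} term $-\zeta(\theta(f\otimes\underline{a}))$. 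The double expansion of each side therefore produces a comparable collection of summands, and the verification reduces to pairing them off.

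Three elementary identities drive the matching:
\begin{enumerate}[(i)]
\item The Leibniz rule $\zeta(a_ia_{i+1})=\zeta(a_i)a_{i+1}+a_i\zeta(a_{i+1})$ pairs the two summands of $\alpha_{m+1}(\partial\theta)$ that first apply $\zeta$ at position $i$ or $i+1$ and then contract, with the single Leibniz-expansion of the summand of $\partial(\alpha_m\theta)$ that contracts first and then has $\alpha_m$ act at the resulting product.
\item The relations \eqref{eq:actions compo} reconcile the boundary terms $\theta((f\cdot a_0)\circ\zeta\otimes\cdots)$ and $\theta((f\circ\zeta)\cdot a_0\otimes\cdots)$: they differ by a correction $\theta(f\cdot\zeta(a_0)\otimes\cdots)$ which is exactly the head of the $j=0$ internal-derivation contribution on the opposite side.
\item The Leibniz identity $\zeta(\theta(\cdots)\cdot a_m)=\zeta(\theta(\cdots))\cdot a_m+\theta(\cdots)\cdot\zeta(a_m)$ plays the symmetric role for the tail, pairing the two outer-$\zeta$ contributions and absorbing the correction against the tail of the $j=m$ internal-derivation term.
\end{enumerate}
All remaining ``disjoint'' summands, where $\alpha$ acts at one position and $\partial$ contracts at another, match on the nose with equal signs.

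The main obstacle is purely clerical: the double expansion produces on the order of $m^2$ summands whose signs and indices must be carefully tracked. I would organise the verification by collecting the summands according to the three families above, and in each family invoke the appropriate Leibniz-type identity (i)--(iii); the disjoint-position contributions then cancel in pairs without further input.
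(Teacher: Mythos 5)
Your proposal is correct and follows essentially the same route as the paper: a direct termwise expansion of both sides on an elementary tensor $f\otimes a_0\otimes\cdots\otimes a_m$, with the matching driven by exactly the three identities you list, namely the Leibniz rule for $\zeta$ on the contracted products $a_{i-1}a_i$, the relations \eqref{eq:actions compo} to reconcile $(f\cdot a_0)\circ\zeta$ with $(f\circ\zeta)\cdot a_0$, and the Leibniz rule applied to $\zeta(\theta(\cdots)a_m)$ for the tail, after which the disjoint-position summands match with equal signs. This is precisely the bookkeeping the paper carries out.
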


\begin{proof}
Take $\ul{a}=a_0\otimes \cdots \otimes a_m\in C^{\otimes {m+1}}$ and $f\in DC.$ Then:
\begin{align*}
\partial (\alpha_m(\theta))(f\otimes \ul{a})&=\alpha_m(\theta)(f\cdot a_0\otimes a_1\otimes \cdots \otimes a_m)+\sum_{i=1}^m(-1)^i\alpha_m(\theta)(f\otimes a_0\otimes\cdots\otimes a_{i-1}a_i\otimes \cdots \otimes a_m)\\
&\qquad +(-1)^{m+1}\alpha_m(\theta)(f\otimes a_0\otimes \cdots\otimes a_{m-1})a_m\\
&=\sum_{j=1}^m\theta(f\cdot a_0\otimes a_1\otimes \cdots\otimes \zeta(a_j)\otimes\cdots \otimes a_m)-\theta((f\cdot a_0)\circ \zeta\otimes a_1\otimes \cdots \otimes a_m)
\\&\qquad\qquad-\zeta(\theta(f\cdot a_0\otimes a_1\otimes \cdots \otimes a_m))\\
&\qquad+\sum_{i=1}^m(-1)^i\Big[\sum_{j=0}^{i-2}\theta(f\otimes a_0\otimes\cdots\otimes \zeta(a_j)\otimes \cdots \otimes a_{i-1}a_i\otimes \cdots\otimes a_m)\\
&\hphantom{\sum_{i=1}^m(-1)^i\qquad\sum_{j=0}^{s-2}\theta}+\theta(f\otimes a_0\otimes \cdots \otimes \zeta(a_{i-1}a_i)\otimes \cdots \otimes a_m)\\
&\hphantom{\sum_{i=1}^m(-1)^i\qquad\sum_{j=0}^{s-2}\theta}+\sum_{j={i+1}}^m\theta(f\otimes a_0\otimes\cdots\otimes a_{i-1}a_i\otimes \cdots \otimes \zeta(a_j)\otimes \cdots \otimes a_m)\\
&\hphantom{\sum_{i=1}^m(-1)^i\qquad\sum_{j=0}^{s-2}\theta}-\theta(f\circ \zeta\otimes a_0\otimes \cdots \otimes a_{i-1}a_i\otimes \cdots \otimes a_m)\\
&\hphantom{\sum_{i=1}^m(-1)^i\qquad\sum_{j=0}^{s-2}\theta}-\zeta(\theta(f\otimes a_0\otimes \cdots\otimes a_{i-1}a_i\otimes \cdots \otimes a_m))\Big]\\
&\qquad+(-1)^{m+1}\Big[\sum_{j=0}^{m-1} \theta(f\otimes a_0\otimes \cdots\otimes \zeta(a_j)\otimes \cdots\otimes a_{m-1})a_m\\&\hphantom{\qquad+(-1)^{m+1}\sum_{j=0}^{s-1} }-\theta(f\circ \zeta\otimes a_0\otimes\cdots \otimes a_{m-1})a_m-\zeta(\theta(f\otimes a_0\otimes\cdots \otimes a_{m-1}))a_m\Big]
\end{align*}  by definition of $\alpha_m.$ We now exchange the sums over $i$ and $j$,  use relation \eqref{eq:actions compo} in the second line, and the fact that $\zeta$ is a derivation in the fifth and last lines below, to get
\begin{align*}
\partial (\alpha_m(\theta))(f\otimes \ul{a})&=\sum_{j=1}^m\theta(f\cdot a_0\otimes a_1\otimes \cdots\otimes \zeta(a_j)\otimes\cdots \otimes a_m)
\\
&\quad\stackrel{\eqref{eq:actions compo}}{+}\theta\left(\big[-(f\circ \zeta)\cdot a_0+f\cdot \zeta(a_0)\big]\otimes a_1\otimes \cdots \otimes a_m\right)
\\
&\quad -\zeta(\theta(f\cdot a_0\otimes a_1\otimes \cdots \otimes a_m))
\\
&\quad +\sum_{j=0}^{m-2}\sum_{i=j+2}^m(-1)^i\theta(f\otimes a_0\otimes \cdots \otimes \zeta(a_j)\otimes \cdots \otimes a_{i-1}a_i\otimes \cdots \otimes a_m)
\\
&\quad +\sum_{i=1}^m(-1)^i\theta(f\otimes a_0\otimes \cdots \otimes [a_{i-1}\zeta(a_i)+\zeta(a_{i-1})a_i]\otimes \cdots a_m)
\\
&\quad +\sum_{j=1}^m\sum_{i=1}^{j-1}(-1)^i\theta(f\otimes a_0\otimes\cdots \otimes a_{i-1}a_i\otimes \cdots \otimes \zeta(a_j)\otimes \cdots \otimes a_m)
\\
&\quad +\sum_{i=1}^m(-1)^{i+1}\theta(f\circ \zeta\otimes a_0\otimes \cdots \otimes a_{i-1}a_i\otimes \cdots \otimes a_m)
\\
&\quad +\sum_{i=1}^m(-1)^{i+1}\zeta(\theta(f\otimes a_0\otimes \cdots \otimes a_{i-1}a_i\otimes \cdots \otimes a_m))\\
&\quad+(-1)^{m+1}\Big[\sum_{j=0}^{m-1} \theta(f\otimes a_0\otimes \cdots\otimes \zeta(a_j)\otimes \cdots\otimes a_{m-1})a_m\\&\hphantom{\qquad+(-1)^{m+1}\sum_{j=0}^{s-1} }-\theta(f\circ \zeta\otimes a_0\otimes\cdots \otimes a_{m-1})a_m\\&\hphantom{\qquad+(-1)^{m+1}\sum_{j=0}^{s-1} }-\zeta(\theta(f\otimes a_0\otimes \cdots\otimes a_{m-1})a_m)+\theta(f\otimes a_0\otimes \cdots \otimes a_{m-1})\zeta(a_m)\Big]
\end{align*}   The fifth line can  be rewritten as 
\[ \sum_{i=1}^m(-1)^i\theta(f\otimes a_0\otimes \cdots \otimes a_{i-1}\zeta(a_i)\otimes \cdots a_m)+\sum_{i=0}^{m-1}(-1)^{i+1}\theta(f\otimes a_0\otimes \cdots \otimes \zeta(a_{i})a_{i+1}\otimes \cdots a_m). \]
We now rewrite the expression we have obtained for $\partial (\alpha_m(\theta))(f\otimes \ul{a}),$ keeping the terms in the same order but isolating some terms for $j=0$ or $j=m$ and relabelling the summation index from $i$ to $j$ in some cases.
\begin{align*}
\partial (\alpha_m(\theta))(f\otimes \ul{a})&=\sum_{j=1}^{m-1}\theta(f\cdot a_0\otimes a_1\otimes \cdots \otimes \zeta(a_j)\otimes \cdots \otimes a_{m})+\theta(f\cdot a_0\otimes a_1\otimes \cdots \otimes a_{m-1}\otimes \zeta(a_m))\\
&\qquad-\theta((f\circ \zeta)\cdot a_0\otimes a_1\otimes \cdots \otimes a_{m})+\theta(f\cdot\zeta(a_0)\otimes a_1\otimes \cdots\otimes a_m)\\
&\qquad-\zeta(\theta(f\cdot a_0\otimes a_1\otimes\cdots\otimes a_m))\\
&\qquad+\sum_{i=2}^m(-1)^i\theta(f\otimes \zeta(a_0)\otimes\cdots\otimes a_{i-1}a_i\otimes\cdots\otimes a_m)\\
&\qquad+\sum_{j=1}^{m-2}\sum_{i={j+2}}^m(-1)^i\theta(f\otimes a_0\otimes \cdots\otimes a_{j-1}\otimes \zeta(a_j)\otimes \cdots \otimes a_{i-1}a_i\otimes \cdots \otimes a_m)\\
&\qquad+\sum_{j=1}^{m-1}(-1)^j\theta(f\otimes a_0\otimes \cdots \otimes a_{j-2}\otimes a_{j-1}\zeta(a_j)\otimes \cdots \otimes a_{m})\\
&\qquad\qquad+(-1)^m\theta(f\otimes a_0\otimes \cdots \otimes a_{m-1}\zeta(a_m))\\
&\qquad-\theta(f\otimes \zeta(a_0)a_1\otimes a_2\otimes\cdots\otimes a_m)\\
&\qquad\qquad+\sum_{j=1}^{m-1}
(-1)^{j+1}\theta(f\otimes a_0\otimes \cdots\otimes a_{j-1}\otimes \zeta(a_j)a_{j+1}\otimes \cdots \otimes a_m)\\
&\qquad+\sum_{j=1}^{m-1}\sum_{i=1}^{j-1}(-1)^i\theta(f\otimes a_0\otimes \cdots \otimes a_{i-1}a_i\otimes \cdots \otimes \zeta(a_j)\otimes \cdots \otimes a_m)\\
&\qquad\qquad+\sum_{i=1}^{m-1}(-1)^{i}\theta(f\otimes a_0\otimes \cdots \otimes a_{i-1}a_i\otimes \cdots \otimes a_{m-1}\otimes \zeta(a_m))\\
&\qquad-\sum_{i=1}^m(-1)^i\theta(f\circ \zeta\otimes a_0\otimes \cdots \otimes a_{i-1}a_i\otimes \cdots \otimes a_m )\\
&\qquad-\sum_{i=1}^m(-1)^i\zeta(\theta(f\otimes a_0\otimes \cdots\otimes a_{i-1}a_i\otimes \cdots \otimes a_m))\\
&\qquad+(-1)^{m+1}\theta(f\otimes\zeta(a_0)\otimes a_1\otimes\cdots\otimes a_{m-1})a_m\\
&\qquad+(-1)^{m+1}\sum_{j=1}^{m-1}\theta(f\otimes a_0\otimes \cdots \otimes \zeta(a_j)\otimes \cdots \otimes a_{m-1})a_m \\
&\qquad -(-1)^{m+1}\theta(f\circ \zeta\otimes a_0\otimes \cdots\otimes a_{m-1})a_m\\
&\qquad-(-1)^{m+1}\zeta(\theta(f\otimes a_0\otimes \cdots \otimes a_{m-1})a_m)+(-1)^{m+1}\theta(f\otimes a_0\otimes \cdots \otimes a_{m-1})\zeta(a_m).
\end{align*} We finally reorder the terms in this expression:
\begin{align*}
\partial (\alpha_m(\theta))(f\otimes \ul{a})&=\sum_{j=1}^{m-1}\Big[\theta(f\cdot a_0\otimes a_1\otimes \cdots \otimes \zeta(a_j)\otimes \cdots \otimes a_{m})\\
&\qquad\qquad+\sum_{i=1}^{j-1}(-1)^i\theta(f\otimes a_0\otimes \cdots \otimes a_{i-1}a_i\otimes \cdots \otimes \zeta(a_j)\otimes \cdots \otimes a_m)\\
&\qquad\qquad+(-1)^j\theta(f\otimes a_0\otimes \cdots \otimes a_{j-2}\otimes a_{j-1}\zeta(a_j)\otimes \cdots \otimes a_{m})\\
&\qquad\qquad+(-1)^{j+1}\theta(f\otimes a_0\otimes \cdots\otimes a_{j-1}\otimes \zeta(a_j)a_{j+1}\otimes \cdots \otimes a_m)\\
&\qquad\qquad+\sum_{i={j+2}}^m(-1)^i\theta(f\otimes a_0\otimes \cdots\otimes a_{j-1}\otimes \zeta(a_j)\otimes \cdots \otimes a_{i-1}a_i\otimes \cdots \otimes a_m)\\
&\qquad\qquad +(-1)^{m+1}\theta(f\otimes a_0\otimes \cdots \otimes \zeta(a_j)\otimes \cdots \otimes a_{m-1})a_m \Big]\\
&\quad -\Big[\theta((f\circ \zeta)\cdot a_0\otimes a_1\otimes \cdots \otimes a_{m})\\
&\qquad +\sum_{i=1}^m(-1)^i\theta(f\circ \zeta\otimes a_0\otimes \cdots \otimes a_{i-1}a_i\otimes \cdots \otimes a_m )\\
&\qquad +(-1)^{m+1}\theta(f\circ \zeta\otimes a_0\otimes \cdots\otimes a_{m-1})a_m\Big]\\
&\quad -\Big[\zeta(\theta(f\cdot a_0\otimes a_1\otimes\cdots\otimes a_m))\\
&\qquad +\sum_{i=1}^m(-1)^i\zeta(\theta(f\otimes a_0\otimes \cdots\otimes a_{i-1}a_i\otimes \cdots \otimes a_m))\\
&\qquad +(-1)^{m+1}\zeta(\theta(f\otimes a_0\otimes \cdots \otimes a_{m-1})a_m)\Big]\\
&\quad+\Big[\theta(f\cdot\zeta(a_0)\otimes a_1\otimes \cdots\otimes a_m)-\theta(f\otimes \zeta(a_0)a_1\otimes a_2\otimes\cdots\otimes a_m)\\
&\qquad +\sum_{i=2}^m(-1)^i\theta(f\otimes \zeta(a_0)\otimes\cdots\otimes a_{i-1}a_i\otimes\cdots\otimes a_m)\\
&\qquad +(-1)^{m+1}\theta(f\otimes\zeta(a_0)\otimes a_1\otimes\cdots\otimes a_{m-1})a_m\Big]\\
&\quad +\Big[\theta(f\cdot a_0\otimes a_1\otimes \cdots \otimes a_{m-1}\otimes \zeta(a_m))\\
&\qquad\qquad+(-1)^m\theta(f\otimes a_0\otimes \cdots \otimes a_{m-1}\zeta(a_m))\\
&\qquad\qquad+\sum_{i=1}^{m-1}(-1)^{i}\theta(f\otimes a_0\otimes \cdots \otimes a_{i-1}a_i\otimes \cdots \otimes a_{m-1}\otimes \zeta(a_m))\\
&\qquad\qquad+(-1)^{m+1}\theta(f\otimes a_0\otimes \cdots \otimes a_{m-1})\zeta(a_m)\Big]
\\
&=\sum_{j=1}^{m-1}\partial \theta(f\otimes a_0\otimes \cdots \otimes \zeta(a_j)\otimes \cdots \otimes a_m)-\partial \theta(f\circ \zeta\otimes \ul{a})-\zeta(\partial \theta(f\otimes \ul{a}))\\
&\qquad\qquad+\partial \theta(f\otimes \zeta(a_0)\otimes a_1\otimes \cdots\otimes a_m )+\partial \theta(f\otimes a_0\otimes \cdots \otimes a_{m-1}\otimes \zeta(a_m))\\
&=\alpha_{m+1}(\partial \theta)(f\otimes \ul{a}).
\end{align*} This completes the proof.
\end{proof}

\begin{cor}\label{cor:cocycle and coboundary}
The map $\alpha_m$ defines a map  from $E_m$ to $E_m$, again denoted by $\alpha_m.$
\end{cor}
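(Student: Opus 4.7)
The plan is to observe that the corollary is a formal consequence of Lemma~\ref{lemma:cocycle and coboundary}: that lemma says exactly that $\alpha_*$ commutes with the differential $\partial$, i.e.\ that $\alpha_*$ is a chain endomorphism of the complex $\Hom_k(DC\otimes C^{\otimes *},C)$ computing $\Ext_C^*(DC,C)$. Any chain endomorphism descends to cohomology, so all I need to do is spell out the two standard verifications (cocycles to cocycles, coboundaries to coboundaries) and appeal to the lemma in each case.

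More precisely, I would proceed in two short steps. First, suppose $\theta\in\Hom_k(DC\otimes C^{\otimes m},C)$ is a cocycle, so $\partial\theta=0$. Then by Lemma~\ref{lemma:cocycle and coboundary},
\[
\partial(\alpha_m(\theta))=\alpha_{m+1}(\partial\theta)=\alpha_{m+1}(0)=0,
\]
so $\alpha_m(\theta)$ is again a cocycle. Second, suppose $\theta=\partial\theta'$ is a coboundary, with $\theta'\in\Hom_k(DC\otimes C^{\otimes(m-1)},C)$. Applying Lemma~\ref{lemma:cocycle and coboundary} to $\theta'$ (with $m$ replaced by $m-1$) gives
\[
\alpha_m(\theta)=\alpha_m(\partial\theta')=\partial(\alpha_{m-1}(\theta')),
\]
so $\alpha_m(\theta)$ is again a coboundary. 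Taken together, these two facts say that $\alpha_m$ sends cocycles to cocycles and coboundaries to coboundaries, hence induces a well-defined $k$-linear map on the quotient $\hh^m(\Hom_k(DC\otimes C^{\otimes *},C))=\Ext_C^m(DC,C)=E_m$, which we still denote by $\alpha_m$.

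There is essentially no obstacle here: all of the hard computational work was carried out in the proof of Lemma~\ref{lemma:cocycle and coboundary}, and the corollary is merely the standard principle that a chain map passes to cohomology. The only minor point to mention is that when $m=0$ the ``coboundary'' step is vacuous, so the argument reduces to the cocycle step alone.
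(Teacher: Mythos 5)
Your proposal is correct and follows exactly the paper's argument: the authors likewise deduce from Lemma~\ref{lemma:cocycle and coboundary} that $\alpha_m$ sends cocycles to cocycles and coboundaries to coboundaries, hence descends to $E_m=\Ext^m_C(DC,C)$. You have merely written out the two standard verifications that the paper leaves implicit.
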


\begin{proof} 
It follows from Lemma~\ref{lemma:cocycle and coboundary} that $\alpha_m$ sends cocycle to cocycle and coboundary to coboundary, that is, $\alpha_m$ defines a map from $E_m$ to $E_m$.
\end{proof}

\begin{lem}\label{lemma:alphas satisfy equations}
The map $\alpha_m:E_m\rightarrow E_m$ satisfies Equations \eqref{eq:c1} and \eqref{eq:c2} for $n=1$. 
\end{lem}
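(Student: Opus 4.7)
The plan is to verify the two identities by direct computation at a generic argument $(f \otimes \ul{a}) \in DC \otimes C^{\otimes m}$, exploiting the fact that for $n=1$ Conditions \eqref{eq:c1} and \eqref{eq:c2} simplify drastically: the sums $\sum_{i=1}^{n-1}$ become empty, and there remains only a pair of boundary terms involving a single element $c \in C.$ Concretely, \eqref{eq:c1} reduces to $\theta\cdot\zeta(c) = \alpha_m(\theta)\cdot c - \alpha_m(\theta\cdot c),$ and \eqref{eq:c2} reduces to $\zeta(c)\cdot\theta = c\cdot \alpha_m(\theta) - \alpha_m(c\cdot\theta),$ where the actions are those described in \eqref{eq:actions E}.

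For \eqref{eq:c1}, I would expand both $\alpha_m(\theta)\cdot c$ and $\alpha_m(\theta\cdot c)$ evaluated at $(f\otimes\ul{a})$ using the definition of $\alpha_m$ together with the right action formula $(\theta\cdot c)(f\otimes\ul{a}) = \theta(c\cdot f\otimes\ul{a})$. All terms of the form $\theta(c\cdot f\otimes a_1\otimes\cdots\otimes\zeta(a_j)\otimes\cdots\otimes a_m)$ cancel, as does the term $\zeta(\theta(c\cdot f\otimes\ul{a}))$. What remains is $-\theta\bigl((c\cdot f)\circ \zeta\otimes\ul{a}\bigr) + \theta\bigl(c\cdot(f\circ\zeta)\otimes\ul{a}\bigr),$ which equals $\theta(\zeta(c)\cdot f\otimes\ul{a}) = (\theta\cdot\zeta(c))(f\otimes\ul{a})$ by the second identity in \eqref{eq:actions compo}.

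For \eqref{eq:c2}, the same strategy applies using the left action $(c\cdot\theta)(f\otimes\ul{a}) = c\,\theta(f\otimes\ul{a})$. When expanding $c\cdot \alpha_m(\theta)$ and $\alpha_m(c\cdot\theta)$, the terms involving $\zeta(a_j)$ and those involving $f\circ\zeta$ all cancel in pairs, leaving the expression $-c\cdot\zeta(\theta(f\otimes\ul{a})) + \zeta(c\cdot\theta(f\otimes\ul{a})).$ Applying the Leibniz rule for $\zeta$ on the product $c\cdot\theta(f\otimes\ul{a}) \in C$ then collapses this to $\zeta(c)\,\theta(f\otimes\ul{a}) = (\zeta(c)\cdot\theta)(f\otimes\ul{a}),$ which is the desired left-hand side.

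There is no genuine obstacle: once the for\-mu\-las are written out the cancellations are immediate, and the only nontrivial inputs are \eqref{eq:actions compo} (for \eqref{eq:c1}) and the derivation property of $\zeta$ (for \eqref{eq:c2}). The one thing to be careful about is the bookkeeping of signs and of the correct form of the $n=1$ specialisation of \eqref{eq:c1} and \eqref{eq:c2}, in particular that the empty tensor $c_1\otimes\cdots\otimes c_{n-1}$ is interpreted as the empty string so that $\alpha_m(\theta)\cdot c_1$ and $c_1\cdot\alpha_m(\theta)$ appear as the correct boundary contributions.
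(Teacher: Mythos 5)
Your proposal is correct and follows essentially the same computation as the paper's proof: specialise \eqref{eq:c1} and \eqref{eq:c2} to $n=1$, expand both sides at $f\otimes\ul{a}$ using the definition of $\alpha_m$ and the actions \eqref{eq:actions E}, observe that the $\zeta(a_j)$-terms (and, as appropriate, the $f\circ\zeta$- or $\zeta(\theta(\cdots))$-terms) cancel in pairs, and conclude via \eqref{eq:actions compo} for \eqref{eq:c1} and the Leibniz rule for $\zeta$ for \eqref{eq:c2}. The only quibble is a labelling slip: the identity you need from \eqref{eq:actions compo} is the first one, $c\cdot(f\circ\zeta)=(c\cdot f)\circ\zeta+\zeta(c)\cdot f$, not the second.
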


\begin{proof} Note that for $n=1$, the equations become
\begin{align*}
\theta\cdot\zeta(c)&=-\alpha_m(\theta\cdot c)+\alpha_m(\theta)\cdot c&\eqref{eq:c1}\\
\zeta(c)\cdot\theta&=c\cdot \alpha_m(\theta)-\alpha_m(c\cdot \theta)&\eqref{eq:c2}
\end{align*} for all $\theta\in E_m$ and all $c\in C.$ 

Now for any $c\in C$, $\theta\in E_m$, $f\in DC$ and $\ul{a}=a_1\otimes\cdots\otimes a_m\in C^{\otimes m}$ we have
\begin{align*}
(-\alpha_m(\theta\cdot c)&+\alpha_m(\theta)\cdot c)(f\otimes \ul{a})=-\alpha_m(\theta\cdot c)(f\otimes \ul{a})+\alpha_m(\theta)(c\cdot f\otimes \ul{a})\\
&=-\left(\sum_{j=1}^m (\theta\cdot c)(f\otimes a_1\otimes \cdots\otimes \zeta(a_j)\otimes \cdots\otimes a_m)-(\theta\cdot c)(f\circ \zeta\otimes \ul{a})-\zeta((\theta\cdot c)(f\otimes \ul{a}))\right)\\
&\quad +\left(\sum_{j=1}^m \theta(c\cdot f\otimes a_1\otimes \cdots\otimes \zeta(a_j)\otimes \cdots\otimes a_m)-\theta((c\cdot f)\circ \zeta\otimes \ul{a})-\zeta(\theta(c\cdot f\otimes \ul{a}))\right)\\
&\stackrel{\eqref{eq:actions E}}=\theta(c\cdot (f\circ \zeta)\otimes \ul{a})-\theta((c\cdot f)\circ \zeta\otimes \ul{a})\\
&\stackrel{\eqref{eq:actions compo}}=\theta(\zeta(c)\cdot f\otimes \ul{a})\\
&\stackrel{\eqref{eq:actions E}}=(\theta\cdot \zeta(c))(f\otimes \ul{a})
\end{align*} where in the last three equalities, we have used respectively \eqref{eq:actions E}, \eqref{eq:actions compo} then \eqref{eq:actions E} again, thus proving Equation \eqref{eq:c1} and 
\begin{align*}
(c\cdot \alpha_m(\theta)&-\alpha_m(c\cdot \theta))(f\otimes \ul{a})=c\,\alpha_m(\theta)(f\otimes \ul{a})-\alpha_m(c\cdot \theta)(f\otimes \ul{a})\\
&=\left(\sum_{j=1}^m c\,\theta(f\otimes a_1\otimes \cdots\otimes \zeta(a_j)\otimes \cdots\otimes a_m)-c\,\theta(f\circ \zeta\otimes \ul{a})-c\zeta(\theta(f\otimes \ul{a}))\right)\\
&\quad+\left(\sum_{j=1}^m (c\cdot \theta)(f\otimes a_1\otimes \cdots\otimes \zeta(a_j)\otimes \cdots\otimes a_m)-(c\cdot \theta)(f\circ \zeta\otimes \ul{a})-\zeta((c\cdot \theta)(f\otimes \ul{a}))\right)\\
&\stackrel{\eqref{eq:actions E}}=-c\,\zeta(\theta(f\otimes \ul{a}))+\zeta(c\,\theta(f\otimes \ul{a}))\\
&\stackrel{\hphantom{\eqref{eq:actions E}}}=\zeta(c)\,\theta(f\otimes \ul{a})\qquad\text{since $\zeta$ is a derivation}\\
&\stackrel{\hphantom{\eqref{eq:actions E}}}=\left(\zeta(c)\cdot \theta\right)(f\otimes \ul{a}),
\end{align*} using \eqref{eq:actions E} in the third equality, which proves Equation \eqref{eq:c2}.
\end{proof}

\begin{thm}\label{cor:phi surjective Es} Let $B=C\ltimes E_m$ be the trivial extension of $C$ by the $C$-$C$-bimodule $E_m=\Ext_C^m(DC,C)$, for some integer $m\pgq 0.$ Then the first Hochschild projection morphism $\varphi^1:\HH^1(B)\rightarrow \HH^1(C)$ is surjective.
\end{thm}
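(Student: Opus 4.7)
The plan is to apply the criterion from Proposition~\ref{prop: SC for surjectivity} together with the already-assembled technical lemmas. By Proposition~\ref{prop: SC for surjectivity}, surjectivity of $\varphi^1$ is equivalent to the vanishing of the connecting map $\delta^{0,1}\colon \HH^1(C)\rightarrow \hh^1(\calc^1(E_m))$. By Corollary~\ref{cor:technical condition}(b) applied with $n=1$, this in turn is equivalent to producing, for every derivation $\zeta\in\Hom_k(C,C)$, a $k$-linear map $\alpha\colon E_m\rightarrow E_m$ satisfying Conditions~\eqref{eq:c1} and \eqref{eq:c2} (Condition~\eqref{eq:c3} being vacuous when $n=1$, since the indexing $1\ppq i\ppq n-1$ is empty).

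The candidate is the map $\alpha_m$ introduced in the Definition preceding the statement. I would thus organise the proof as a bookkeeping argument: first invoke Corollary~\ref{cor:cocycle and coboundary} to confirm that $\alpha_m$, though initially defined on cochains $\Hom_k(DC\otimes C^{\otimes m},C)$, descends to an endomorphism of $E_m=\Ext^m_C(DC,C)$; this is precisely what the commutative square in Lemma~\ref{lemma:cocycle and coboundary} provides, since it sends cocycles to cocycles and coboundaries to coboundaries. Then invoke Lemma~\ref{lemma:alphas satisfy equations} to conclude that $\alpha_m$ verifies Equations~\eqref{eq:c1} and \eqref{eq:c2} for $n=1$, for the given derivation $\zeta$.

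The hypothesis of Corollary~\ref{cor:technical condition}(b) is therefore satisfied, so $\delta^{0,1}=0$ and $\varphi^1\colon \HH^1(B)\rightarrow \HH^1(C)$ is surjective. The only conceptual step is the choice of $\alpha_m$, all the hard calculations (the compatibility with $\partial$ and the verification of \eqref{eq:c1}--\eqref{eq:c2}) having been absorbed into the two lemmas above; the main obstacle was really the combinatorial verification of Lemma~\ref{lemma:cocycle and coboundary}, which already appears in the excerpt. Hence the theorem reduces to a one-line assembly.
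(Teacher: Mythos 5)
Your proposal is correct and follows exactly the paper's own argument: invoke Corollary~\ref{cor:technical condition} (via Proposition~\ref{prop: SC for surjectivity}) to reduce surjectivity of $\varphi^1$ to the existence of a suitable $\alpha$, then cite Corollary~\ref{cor:cocycle and coboundary} for well-definedness of $\alpha_m$ on $E_m$ and Lemma~\ref{lemma:alphas satisfy equations} for Conditions~\eqref{eq:c1} and \eqref{eq:c2}, with \eqref{eq:c3} vacuous for $n=1$. Nothing is missing.
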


\begin{proof} By  Corollary~\ref{cor:technical condition}, $\varphi^1$ is surjective if, and only if, for any derivation $\zeta\in \Hom_k(C,C)$, there exists $\alpha\in\Hom_k(E^{1,n-1},E)=\End_kE_m$ that satisfies Equations  \eqref{eq:c1} and \eqref{eq:c2}, Equation  \eqref{eq:c3} being empty for $n=1$. Corollary  \ref{cor:cocycle and coboundary} shows that $\alpha_m$ is a well-defined map in $\End_kE_m$ and we have proved in Lemma~\ref{lemma:alphas satisfy equations} that it satisfies Equations  \eqref{eq:c1} and \eqref{eq:c2}. Therefore $\varphi^1$ is surjective.
\end{proof}

\section{Study of the kernel of $\varphi^1$}\label{sec:kernel}

We now return to the general situation of a trivial extension $B=C\ltimes E$ of a finite-dimensional algebra $C$ by an arbitrary $C$-$C$-bimodule $E$. In the cases where $\varphi^*$ is surjective, it is natural to ask what its kernel is. We answer this question for $\varphi^0$ and $\varphi^1$ in homological terms.

\begin{lem}\label{lem:phi0 ses} Let $B=C\ltimes E$ be  the trivial extension of $C$ by a $C$-$C$-bimodule $E$ which is symmetric over $Z(C)$. Then there exists a short exact sequence of vector spaces 
\[ 0\rightarrow \hh^0(B,E)\rightarrow\HH^0(B)\xrightarrow{\varphi^0}\HH^0(C)\rightarrow 0. \]
\end{lem}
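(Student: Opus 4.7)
The plan is to read off the short exact sequence from the long exact cohomology sequence \eqref{eq:long cohomology sequence} obtained by applying $\Hom_{B\da B}(B,-)$ to $0\to E\to B\to C\to 0$. That sequence begins
\[
0\to \hh^0(B,E)\to \HH^0(B)\xrightarrow{\HH^0(p)} \hh^0(B,C)\xrightarrow{\delta_B^0}\hh^1(B,E)\to\cdots,
\]
so exactness already gives that $\HH^0(p)$ is injective on the quotient $\HH^0(B)/\im(\hh^0(B,E)\to\HH^0(B))$ and that $\hh^0(B,E)\hookrightarrow\HH^0(B)$ is injective.

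Next, I would identify this initial segment with the one in the statement. By Lemma~\ref{cor:cohom C embeds B}(a), the retraction $\nu_0\colon\hh^0(B,C)\to\HH^0(C)$ is an isomorphism, and by Lemma~\ref{lemma:phin reinterpreted} (for $n=0$) we have $\varphi^0=\nu_0\circ\HH^0(p)$. Consequently $\ker\varphi^0=\ker\HH^0(p)=\im\bigl(\hh^0(B,E)\to\HH^0(B)\bigr)$, and $\hh^0(B,E)$ embeds into $\HH^0(B)$ with image exactly $\ker\varphi^0$.

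It remains to check surjectivity of $\varphi^0$, which is precisely the hypothesis under which Corollary~\ref{cor:technical condition}(a) applies: since $E$ is symmetric over $Z(C)$, $\varphi^0\colon Z(B)\to Z(C)$ is surjective. Splicing this together yields the short exact sequence
\[
0\to \hh^0(B,E)\to \HH^0(B)\xrightarrow{\varphi^0}\HH^0(C)\to 0.
\]
No step here is really an obstacle; the only conceptual content is recognising that $\varphi^0$ is (up to the canonical isomorphism $\nu_0$) the connecting morphism from the long exact sequence, which was already established in Section~\ref{sec:Hochschild proj}.
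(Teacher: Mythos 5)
Your argument is correct, and it reaches the same two conclusions as the paper (surjectivity of $\varphi^0$ from Corollary~\ref{cor:technical condition}(a), and $\ker\varphi^0\cong\hh^0(B,E)$), but it identifies the kernel by a different mechanism. You read exactness at $\HH^0(B)$ off the long exact sequence \eqref{eq:long cohomology sequence} and then transport it along the isomorphism $\nu_0\colon\hh^0(B,C)\xrightarrow{\sim}\HH^0(C)$ of Lemma~\ref{cor:cohom C embeds B}(a), using $\varphi^0=\nu_0\circ\HH^0(p)$. (For $n=0$ this last identity needs nothing from Lemma~\ref{lemma:phin reinterpreted} or the CMRS decomposition: $\nu_0(\HH^0(p)[f])=[pfq^{\otimes 0}]=\varphi^0([f])$ is immediate from the definitions, so your argument in fact works for any split extension up to the surjectivity step.) The paper instead argues by hand: $\varphi^0$ is the restriction of $p$ to $Z(B)$, so its kernel is the set of elements of $E$ that are central in $B$, which is $\Hom_{B\da B}(B,E)=\hh^0(B,E)$. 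The two routes are equivalent in content --- the paper's computation is essentially the degree-zero part of the long exact sequence made explicit --- but yours has the small advantage of making transparent that the inclusion $\hh^0(B,E)\hookrightarrow\HH^0(B)$ is the map induced by $i\colon E\to B$, i.e.\ the same map that appears in the higher-degree sequence of Theorem~\ref{thmIntro:case triangular}, while the paper's is more self-contained and elementary.
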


\begin{proof}
Since $E$ is symmetric over $Z(C)$, it follows from Corollary~\ref{cor:technical condition} that $\varphi^0$ is surjective. Moreover, $\varphi^0:Z(B)\rightarrow Z(C)$ identifies with the restriction of $p$ to $Z(B)$, and its kernel is the subspace of elements in $E$ that are central in $B$, which is isomorphic to $\Hom_{B\da B}(B,E)=\hh^0(B,E)$.
\end{proof}

Following the notation in \cite{Saorin}, we let $\cale(E)$ denote the $k$-subspace of $\Hom_{C\da C}(E,C)$ consisting of all $C$-$C$-bimodule morphisms $f:E\rightarrow C$ such that, for any $x,$ $y$ in $E$, we have $f(x)y+xf(y)=0$. Identifying $E$ with $i(E)$ and taking into account the fact that the $C$-$C$-bimodule structure of $E$ is given by means of $q$, this condition can be written $qf(x)y+xqf(y)=0.$ 

We shall see in Corollary~\ref{lem: cale E trivial} that if $C$ is  a triangular algebra of global dimension at most two and if $E=\Ext_C^2(DC,C)$, we  have $\cale(E)=0.$ This is generally not the case, as Example~\ref{ex explicit ses} continued below shows.

\begin{example}\label{ex explicit ses continued} We  compute $\cale (E)$ in  Example~\ref{ex explicit ses}. Any morphism $f\in\Hom_{C\da C}(E,C)$ is entirely determined by the $4$-tuple $(f(a_1-\bar{a}_0),f(a_0+\bar{a}_1),f(a_0\bar{a}_0),f(a_1\bar{a}_1))\in C^4.$ Since $f$ is a morphism of $C$-$C$-bimodules, we must have $f(a_i+(-1)^i\bar a_{i+1})\in e_iCe_{i+1}=k\alpha_i$  and $f(a_{i}\bar a_i)\in e_iCe_i=ke_i$ for $i\in\mathbb{Z}/2\mathbb{Z}.$ 
Moreover,
\begin{align*}
f(\alpha_{i+1}\cdot(a_i+(-1)^i\bar a_{i+1}))&=f(q(\alpha_{i+1})(a_i+(-1)^i\bar a_{i+1}))=f(a_{i+1}(a_i+(-1)^i\bar a_{i+1}))\\&=f((-1)^ia_{i+1}\bar a_{i+1})\\
\text{and }f(\alpha_{i+1}\cdot(a_i+(-1)^i\bar a_{i+1}))&=\alpha_{i+1}f(a_i+(-1)^i\bar a_{i+1})=0 \text{ (since $\alpha_{i+1}\alpha_i=0$ in $C$).}
\end{align*} Therefore $f(a_i\bar a_i)=0$ for $i\in\{0,1\}.$ 

Now let $f\in\Hom_{C\da C}(E,C)$ be defined by $f(a_i+(-1)^i\bar a_{i+1})=\lambda_i \alpha_i$ with $\lambda_i\in k.$  We want a necessary and sufficient condition for $f$ to be in $\cale (E),$ that is, to satisfy the relation  $qf(x)y+xqf(y)=0$ for all $x,y$ in $E.$ Since $f(a_i\bar a_i)=0$, paths of length three vanish in $B$ and $f$ and $q$ are morphisms of $C$-$C$-bimodules, it is enough to check this when $x$, $y$ are in the set $\{a_1-\bar{a}_0,a_0+\bar{a}_1\}$ and when the source of $y$ is target of $x,$ that is, $x$ and $y$ are different. We have 
\begin{align*}
qf(a_{i}+(-1)^i\bar a_{i+1})&(a_{i+1}-(-1)^i\bar a_{i})+(a_{i}+(-1)^i\bar a_{i+1})qf(a_{i+1}-(-1)^i\bar a_{i})\\&=\lambda_{i}a_{i}(a_{i+1}-(-1)^i\bar a_{i})+(a_{i}+(-1)^i\bar a_{i+1})\lambda_{i+1}a_{i+1}\\&=(-1)^i\left(-\lambda_{i}a_{i}\bar a_{i}+\lambda_{i+1}\bar a_{i+1}a_{i+1}\right)\\&=(-1)^i(\lambda_{i+1}-\lambda_{i})a_{i}\bar a_{i}
\end{align*} for $i\in\{0,1\}$
 so that $f\in\cale (E)$ if, and only if, $\lambda_0=\lambda_1.$

Finally, $\cale (E)=k\zeta$ where $\zeta:E\rightarrow C$ is the morphism of $C$-$C$-bimodules defined on our basis of $E$ by $\zeta(a_0+\bar a_1)=\alpha_0$, $\zeta(a_1-\bar a_0)=\alpha_1$ and $\zeta(a_i\bar a_i)=0$ for $i\in\{0,1\}.$
\end{example}

\begin{lem}\label{lem ker connecting morphism}
Let $B=C\ltimes E$ be the trivial extension  of $C$ by a $C$-$C$-bimodule $E$ and let $\delta_B^1:\hh^1(B,C)\rightarrow \hh^2(B,E)$ be the connecting morphism. Assume that $\delta^{0,1}=0$. Then 
\[ \ker\delta_B^1=\HH^1(C)\oplus \cale(E). \]
\end{lem}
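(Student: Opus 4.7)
The plan is to apply the double complex decomposition from Proposition~\ref{prop:results CMRS} to the connecting morphism $\delta_B^1$ and then match the surviving summand with the defining condition of $\cale(E)$. First I would invoke parts \ref{cmrs:colm} and \ref{cmrs:HHn decomposition} of Proposition~\ref{prop:results CMRS} to obtain
\[ \hh^1(B,C)\cong \HH^1(C)\oplus \Hom_{C\da C}(E,C)\quad\text{and}\quad \hh^2(B,E)\cong \hh^2(\calc^0(E))\oplus \hh^1(\calc^1(E))\oplus \Hom_{C\da C}(E\otimes_C E,E), \] and part \ref{cmrs:connecting morphism} to split $\delta_B^1=\delta^{0,1}\oplus\delta^{1,0}$ accordingly. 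Since $\delta^{0,1}=0$ by hypothesis, the kernel reduces to $\ker\delta_B^1=\HH^1(C)\oplus\ker\delta^{1,0}$, and the whole lemma comes down to the identification $\ker\delta^{1,0}=\cale(E)$.

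For this identification I would compute $\delta^{1,0}$ explicitly from the cup-product formula in Proposition~\ref{prop:results CMRS}\ref{cmrs:connecting morphism}: for $f\in\Hom_{C\da C}(E,C)$ the formula reads
\[ \delta^{1,0}(f)=\id_E\smile f+(-1)^{1+0+1}f\smile\id_E=\id_E\smile f+f\smile\id_E, \]
and evaluating on $x\otimes y\in E\otimes_C E$ (which is a legitimate tensor because $f$ is $C$-$C$-bilinear) should produce $xf(y)+f(x)y\in E$. Under the identification $E\cong i(E)\subset B$ this becomes $xqf(y)+qf(x)y$, so $\delta^{1,0}(f)=0$ if and only if $qf(x)y+xqf(y)=0$ for every $x,y\in E$, which is precisely the condition defining $\cale(E)$. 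Combining the two steps yields $\ker\delta_B^1=\HH^1(C)\oplus\cale(E)$ as desired.

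The step I expect will require the most care is the explicit evaluation of $\delta^{1,0}$: Proposition~\ref{prop:results CMRS}\ref{cmrs:connecting morphism} writes the cup product with a priori target $C\otimes_B C$ via the composite $B^{\otimes(s+r)}\to C\otimes C\to C\otimes_B C$, so one has to carefully track the canonical identification that lands the result inside the summand $\Hom_{C\da C}(E\otimes_C E,E)$ of $\hh^2(B,E)$ and verify it agrees with the bimodule pairing $x\otimes y\mapsto xf(y)+f(x)y$. Once this identification is settled, the remainder is a direct comparison of definitions.
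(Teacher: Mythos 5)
Your proposal is correct and follows essentially the same route as the paper: decompose $\hh^1(B,C)$ and $\hh^2(B,E)$ via Proposition~\ref{prop:results CMRS}, split $\delta_B^1=\delta^{0,1}+\delta^{1,0}$, kill $\delta^{0,1}$ by hypothesis, and identify $\ker\delta^{1,0}$ with $\cale(E)$ from the explicit formula $\delta^{1,0}(f)=\id_E\smile f+f\smile\id_E$, i.e.\ $x\otimes y\mapsto f(x)y+xf(y)$. The paper simply asserts this last identification ``by definition,'' so your extra care with the identification $E\otimes C\to C\otimes_B C$ landing in $\Hom_{C\da C}(E\otimes_C E,E)$ is a welcome elaboration rather than a divergence.
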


\begin{proof} We apply the results in Proposition~\ref{prop:results CMRS}
 with $X=q(C)\cong C$ and $X=E$, which gives the identities
\begin{align*}
\hh^1(B,C)&=\hh^0(\calc^1(C))\oplus\hh^1(\calc^0(C))= \Hom_{C\da C}(E,C)\oplus \HH^1(C)\\
\hh^2(B,E)&=\hh^0(\calc^2(E))\oplus\hh^1(\calc^1(E))\oplus\hh^2(\calc^0(E))\\
&=\Hom_{C\da C}(E\otimes _C E,E)\oplus\Ext^1_{C\da C}(E,E)\oplus \hh^2(C,E).
\end{align*} 

Moreover,  $\delta_B^1=\delta^{1,0}+\delta^{0,1}$, where  the map $\delta^{1,0}\colon \Hom_{C\da C}(E,C)\rightarrow\Hom_{C\da C}(E\otimes _C E,E)$ is given by 
\[ \delta^{1,0}(f)= \id_E\otimes_C f+f\otimes_C\id_E\] for any  $f\in\Hom_{C\da C}(E,C)$ and $\delta^{0,1}:\HH^1(C)\rightarrow\Ext^1_{C\da C}(E,E)$ vanishes by assumption. Therefore $\delta_B^1=\delta^{1,0}.$  Moreover, by definition, $\ker \delta^{1,0}=\cale(E).$ The result then follows.
\end{proof}

\begin{thm}\label{thm:ker phi for trivial extension}
Let $B=C\ltimes E$ be the trivial extension  of $C$ by a $C$-$C$-bimodule $E$ and assume that $E$ is  symmetric over $Z(C)$. Assume that the Hochschild projection morphism $\varphi^1$ is surjective. Then we have a short exact sequence 
\[ 0\rightarrow \hh^1(B,E)\oplus \cale(E) \rightarrow \HH^1(B)\xrightarrow{\varphi^1}\HH^1(C)\rightarrow 0. \]
\end{thm}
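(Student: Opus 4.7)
The plan is to splice together the long exact cohomology sequence \eqref{eq:long cohomology sequence} with the column decomposition of Proposition~\ref{prop:results CMRS} and the calculation of $\ker\delta_B^1$ in Lemma~\ref{lem ker connecting morphism}. First, I observe that the hypothesis that $E$ is symmetric over $Z(C)$, via Corollary~\ref{cor:technical condition}(a) together with Proposition~\ref{prop: SC for surjectivity}, forces $\delta^{0,0}=\delta_B^0=0$. Therefore the portion of \eqref{eq:long cohomology sequence} around degree $1$ collapses to
\[ 0\to \hh^1(B,E)\to \HH^1(B)\xrightarrow{\HH^1(p)}\hh^1(B,C)\xrightarrow{\delta^1_B}\hh^2(B,E), \]
so that $\hh^1(B,E)$ embeds in $\HH^1(B)$ and the image of $\HH^1(p)$ is exactly $\ker\delta_B^1$.

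Next, since we are assuming $\varphi^1$ is surjective, Proposition~\ref{prop: SC for surjectivity} gives $\delta^{0,1}=0$, so Lemma~\ref{lem ker connecting morphism} applies and yields
\[ \ker\delta_B^1=\HH^1(C)\oplus\cale(E) \]
inside the decomposition $\hh^1(B,C)\cong \HH^1(C)\oplus\Hom_{C\da C}(E,C)$ from Proposition~\ref{prop:results CMRS}. Combining the two displays, I obtain a short exact sequence of $k$-vector spaces
\[ 0\to \hh^1(B,E)\to \HH^1(B)\xrightarrow{\;\pi\;}\HH^1(C)\oplus\cale(E)\to 0, \]
where $\pi=\psi_1\circ\HH^1(p)$ in the notation of Lemma~\ref{lemma:phin reinterpreted}. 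The crucial point now is that, again by Lemma~\ref{lemma:phin reinterpreted}, composing $\pi$ with the projection onto the first summand $\HH^1(C)$ recovers precisely $\varphi^1$.

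It follows that $\ker\varphi^1=\pi^{-1}(\cale(E))$, which fits into a short exact sequence
\[ 0\to \hh^1(B,E)\to \ker\varphi^1\to \cale(E)\to 0. \]
This sequence splits as a sequence of vector spaces, so $\ker\varphi^1\cong \hh^1(B,E)\oplus \cale(E)$, and combined with the surjectivity of $\varphi^1$ this is exactly the desired short exact sequence. The only mildly subtle step is ensuring that under the identification $\hh^1(B,C)\cong \HH^1(C)\oplus \Hom_{C\da C}(E,C)$ the subspace $\cale(E)$ sitting inside $\ker\delta_B^1$ really is orthogonal to $\HH^1(C)$ in the sense needed to run the above argument — but this is immediate since $\cale(E)\subseteq \Hom_{C\da C}(E,C)=\hh^0(\calc^1(C))$ is, by construction, the $s=1$ column contribution, disjoint from the $s=0$ column $\HH^1(C)=\hh^1(\calc^0(C))$.
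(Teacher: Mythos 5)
Your argument is correct and follows essentially the same route as the paper: both vanishing statements $\delta_B^0=0$ and $\delta^{0,1}=0$ are used to extract the short exact sequence $0\to \hh^1(B,E)\to\HH^1(B)\to\HH^1(C)\oplus\cale(E)\to 0$ from the long exact sequence together with Lemma~\ref{lem ker connecting morphism}, and the identification $\ker\varphi^1\cong\hh^1(B,E)\oplus\cale(E)$ is then obtained by splitting off $\cale(E)$, exactly as in the paper's proof (which phrases the last step via an explicit section $(s,t)$ of the surjection rather than via the sub-exact sequence $0\to\hh^1(B,E)\to\ker\varphi^1\to\cale(E)\to 0$, but this is only a cosmetic difference).
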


\begin{proof}
Since $E$ is  symmetric over $Z(C)$, it follows from \cite[Proposition 3.3]{CMRS} that the connecting morphism $\hh^0(B,C)\rightarrow \hh^1(B,E)$ vanishes. Moreover, $\delta^{0,1}=0$ by Proposition~\ref{prop: SC for surjectivity} because $\varphi^1$ is surjective. Therefore there exists  a short exact sequence of vector spaces 
\[ 0\rightarrow \hh^1(B,E)\xrightarrow j\HH^1(B)\xrightarrow{\small\begin{pmatrix}\varphi^1\\\rho\end{pmatrix}}\HH^1(C)\oplus \cale(E)\rightarrow 0 .\] Let $(s,t)$ be a section of the surjection $\small\begin{pmatrix}\varphi^1\\\rho\end{pmatrix}$, so that $s$ is a section of $\varphi^1$, $t$ is a section of $\rho$ and we have the relations $\varphi^1 t=0=\rho s.$ Then $\ker \varphi^1=\im j\oplus \im t.$ Indeed, if $b\in\ker \varphi^1$ then $b=(b-t\rho(b))+t\rho(b)$ with $b-t\rho(b)\in\ker \varphi^1\cap\ker\rho=\ker\small\begin{pmatrix}\varphi^1\\\rho\end{pmatrix}=\im j $, and it is easy to check that $\im j\cap \im t=0.$

Finally, we have $\ker\varphi^1\cong \hh^1(B,E)\oplus \cale(E)$ as required.
\end{proof}

\begin{remark} In case $E=DC$, the exact sequence in Theorem~\ref{thm:ker phi for trivial extension} follows from \cite[Theorem 5.7]{CMRS}
\end{remark}

\begin{example}\label{ex explicit ses continued again}
Let us continue Examples \ref{ex explicit ses} and \ref{ex explicit ses continued}. We keep the same notation. The expression of $\varphi$ in our basis $\{[u_0],[u_1],[v_0],[v_1]\}$ shows that $\ker \varphi=\spn\{[u_0+v_0],[u_1],[v_1]\}.$

It was shown in \cite[Proposition 4.4]{EH} that $Z(C)\cong \HH^0(C)\cong k$. In particular, $E$ is symmetric over the centre $Z(C)$.
Therefore the theorem above and the results in the previous examples show that $\dim_k\hh^1(B,E)=\dim_k\HH^1(B)-\dim_k\HH^1(C)-\dim_k\cale(E)=4-1-1=2.$ Moreover, $u_0+v_0$ and $u_1+v_1$ are 
 derivations from $B$ to $E$ whose cohomology classes are linearly independant, therefore $\{[u_0+v_0],[u_1+v_1]\}$ is a basis of $\hh^1(B,E)$.
This can in fact be checked directly in this example 
using the minimal projective resolution of the $B$-$B$-bimodule $B$ given in \cite[1.1]{ST} or the bar resolution of $B$.

Note that, using Remark~\ref{rk: description projection on cale E}, the map $\rho$ is given by $\rho([u_0])=\zeta=\rho([v_1])$ and $\rho([u_1])=-\zeta=\rho([v_0])$ and that $\ker\varphi\cap\ker\rho=\spn\{[u_0+v_0],[u_1+v_1]\}=\hh^1(B,E).$
\end{example}

The statements that follow, which give decompositions of $\hh^1(B,E)$, were proved in \cite{ARS} under the assumption that $B$ is cluster-tilted with $C$ tilted and $E=\Ext^2_C(DC,C)$, but they remain valid in our more general situation. We include the proofs for the benefit of the reader. Recall that $\Der_0(B,E)$ denotes the group of normalised derivations from $B$ to $E$, see paragraph \ref{subsec:Hochschild cohomology}.

 \begin{lem}\label{lem decomp normalised}
 Let $B$ be the trivial extension of an algebra $C$  by a $C$-$C$-bimodule $E$. Then 
\[\Der_0(B,E) \cong \Der_0(C,E) \oplus \End_{C\da C}E.\]
\end{lem}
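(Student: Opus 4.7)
The plan is to decompose any normalised derivation $d\colon B\to E$ according to the $k$-vector space decomposition $B=C\oplus E$, so that $d=(d_1,d_2)$ where $d_1=d\circ q\colon C\to E$ and $d_2=d\circ i\colon E\to E$, and then show that $d_1$ must be a normalised derivation $C\to E$ (viewing $E$ as a $C$-$C$-bimodule via $q$), while $d_2$ must be a $C$-$C$-bimodule endomorphism of $E$. Conversely, any such pair $(d_1,d_2)\in\Der_0(C,E)\oplus \End_{C\da C}E$ defines a normalised derivation $B\to E$ via $(c,x)\mapsto d_1(c)+d_2(x)$.

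First I would check the forward direction. Given $d\in\Der_0(B,E)$ and $c,c'\in C$, the fact that $q$ is an algebra morphism and that $E\cdot E=0$ inside $B$ (since $E^2=0$) gives
\[ d_1(cc')=d(q(c)q(c'))=q(c)d(q(c'))+d(q(c))q(c')=c\cdot d_1(c')+d_1(c)\cdot c', \]
where the actions of $C$ on $E$ are those coming from the bimodule structure via $q$. Hence $d_1$ is a derivation, and normalisation of $d_1$ is immediate since the primitive orthogonal idempotents of $B$ all lie in $q(C)$. For $d_2$, given $c\in C$ and $x\in E$, applying $d$ to the products $q(c)i(x)=i(cx)$ and $i(x)q(c)=i(xc)$ and using once more that $E\cdot E=0$ yields $d_2(cx)=c\cdot d_2(x)$ and $d_2(xc)=d_2(x)\cdot c$, so $d_2\in\End_{C\da C}E$.

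For the converse, given a pair $(d_1,d_2)$, define $d\colon B\to E$ by $d(c,x)=d_1(c)+d_2(x)$. To verify that $d$ is a derivation, I would compute $d((c,x)(c',x'))=d(cc',cx'+xc')=d_1(cc')+d_2(cx'+xc')$, expand using that $d_1$ is a derivation and $d_2$ is $C$-$C$-linear, and then observe that all the unwanted cross terms in $(c,x)d(c',x')+d(c,x)(c',x')$ take values in $E\cdot E=0$, leaving exactly $d_1(cc')+c\cdot d_2(x')+d_2(x)\cdot c'$, as required. Normalisation is inherited from that of $d_1$ (since the idempotents lie in $q(C)$ and $d_2$ does not contribute on them).

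The two constructions $d\mapsto(d_1,d_2)$ and $(d_1,d_2)\mapsto d$ are evidently $k$-linear and mutually inverse, which establishes the claimed isomorphism $\Der_0(B,E)\cong \Der_0(C,E)\oplus \End_{C\da C}E$. There is no real obstacle; the only point requiring care is the systematic use of $E^2=0$ to eliminate the cross terms when verifying the Leibniz rule in both directions.
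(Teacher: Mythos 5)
Your proof is correct and follows essentially the same route as the paper: restrict a normalised derivation to the two summands $C$ and $E$ of $B$, check that the restrictions are a normalised derivation $C\to E$ and a $C$-$C$-bimodule endomorphism of $E$ respectively, with $E^2=0$ killing the cross terms. The only difference is that you explicitly verify the converse direction (that every pair $(d_1,d_2)$ assembles into a normalised derivation of $B$), which the paper leaves implicit; that is a welcome but minor addition, not a change of method.
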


\begin{proof} Let $\delta $ be an element in $ \Der_0(B,E)$ and define  $k$-linear maps $d: C \rightarrow E, \, \, f: E \rightarrow E$ by setting 
\begin{align*}
 d(c)&= \delta(c,0) \text{   for all } c \in C\\
f(x)&= \delta(0,x) \text{   for all } x \in E
\end{align*}
that is, $d= \delta|_C$ and $f= \delta|_E$. We first show that $d \in \Der_0(C,E)$. Let $c,c'\in E$, then 
\[d(cc')= \delta(cc',0)= \delta ((c,0) (c',0))= (c,0) \delta(c',0) + \delta(c,0)(c',0)= cd(c') +d(c) c.\]

 Moreover, $d(e_i)= \delta (e_i,0)=0$ for every primitive idempotent $e_i$ of $B$.

 Next, we prove that $ f \in \End_{C\da C}E$. Let $ c \in C, \, x \in E,$ then 
\[ f(cx)= \delta(0,cx)=\delta((c,0)(0,x)) = (c,0)\delta(0,x) + \delta(c,0) (0,x)= c f(x) + d(c)x= c f(x),\]
because $d(c) \in E$ and $E^2=0$. Similarly, $f(xc)=f(x)c$.

Because $\delta=d+f$, this proves that $\Der_0(B,E)=\Der_0(C,E)+\End_{C\da C}E.$ Now $\Der_0(C,E)\subseteq \Hom_k(C,E)$ and $\End_{C\da C}E\subseteq \End_k(E).$ Therefore $\Hom_k(C,E)\cap\End_kE=0$, which implies the statement.
\end{proof}

 \begin{prop}\label{prop:decomp H1 B E}
 Let $B$ be the trivial extension of an algebra $C$  by a $C$-$C$-bimodule $E$. Then 
\[\hh^1(B,E) \cong \hh^1(C,E) \oplus \End_{C\da C}E.\]
\end{prop}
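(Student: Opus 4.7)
The plan is to promote the decomposition of normalised derivations from Lemma~\ref{lem decomp normalised} to a decomposition of the first Hochschild cohomology by carefully tracking inner derivations through the isomorphism. Recall from Section~\ref{subsec:Hochschild cohomology} that $\hh^1(B,E)\cong \Der_0(B,E)/\Inn_0(B,E)$ and similarly $\hh^1(C,E)\cong \Der_0(C,E)/\Inn_0(C,E)$, so the task reduces to identifying $\Inn_0(B,E)$ inside $\Der_0(B,E)\cong \Der_0(C,E)\oplus \End_{C\da C}E$.

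First I would spell out the explicit form of an inner derivation of $B$ with values in $E$. Any such derivation has the shape $\delta_x\colon b\mapsto xb-bx$ for some $x\in E$, and because $E^2=0$ the restriction $\delta_x|_E$ vanishes, while $\delta_x|_C$ is precisely the inner derivation $d_x\colon c\mapsto xc-cx$ of $C$. Thus, under the identification $\delta\leftrightarrow(\delta|_C,\delta|_E)$ of Lemma~\ref{lem decomp normalised}, the subspace $\Inn(B,E)\cap \Der_0(B,E)$ sits inside the first summand $\Der_0(C,E)\oplus 0$.

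Next I would check that the normalisation condition matches on both sides: $\delta_x$ is normalised if and only if $xe_i=e_ix$ for every primitive idempotent $e_i$, and this is exactly the same condition needed for $d_x$ to be normalised. Consequently
\[
\Inn_0(B,E)\;\cong\; \{(d_x,0)\,:\,x\in E,\ xe_i=e_ix\ \forall i\}\;=\;\Inn_0(C,E)\oplus 0
\]
as subspaces of $\Der_0(C,E)\oplus \End_{C\da C}E$. Taking the quotient then gives
\[
\hh^1(B,E)\;\cong\;\frac{\Der_0(C,E)\oplus \End_{C\da C}E}{\Inn_0(C,E)\oplus 0}\;\cong\;\hh^1(C,E)\oplus \End_{C\da C}E,
\]
which is the claimed isomorphism.

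The only delicate point is step three, namely verifying that the two normalisation conditions coincide so that no spurious inner derivations are introduced or lost when passing from $B$ to $C$; once the hypothesis $E^2=0$ is used to kill $\delta_x|_E$, this match is immediate. The rest is a direct quotient computation from Lemma~\ref{lem decomp normalised}.
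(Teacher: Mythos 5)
Your proposal is correct and follows essentially the same route as the paper: it invokes Lemma~\ref{lem decomp normalised}, observes that an inner derivation $[x,-]$ with $x\in E$ restricts to $0$ on $E$ (since $E^2=0$) and to the inner derivation $[x,-]$ on $C$, so that $\Inn_0(B,E)=\Inn_0(C,E)\oplus 0$ inside the decomposition, and then passes to the quotient. Your extra check that the normalisation conditions on $B$ and on $C$ coincide is a worthwhile point of care, but the argument is otherwise identical to the paper's.
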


\begin{proof} We claim that the direct sum decomposition $\Der_0(B,E)=\Der_0(C,E) \oplus \End_{C\da C}E$ of Lemma~\ref{lem decomp normalised} above induces on the level of the inner derivations the decomposition $\Inn_0(B,E)= \Inn_0(C,E) \oplus 0$, that is, if $\delta \in \Inn_0(B,E)$ then $d= \delta|_C \in \Inn_0(C,E)$ and $f= \delta|_E=0$.

Since $\delta \in \Inn_0(B,E)$, there exists $x_0\in C$ such that $\delta = [x_0,-]$, that is, for all $(c,x) \in B$ we have \[\delta(c,x)= (0,x_0)(c,x) - (c,x)(0,x_0)= (0, x_0c-cx_0)\] (note that we identify $E$ and $i(E)$).

It follows immediately that $\delta|_C:c\mapsto x_0c-cx_0$ is an inner derivation from $C$ to $E$ and that $\delta|_E=0.$

The statement then follows by taking quotients:
\[\hh^1(B,E) \cong \frac{\Der_0(B,E)}{\Inn_0(B,E)} \cong \frac{\Der_0(C,E) \oplus \End_{C\da C}E}{\Inn_0(C,E) \oplus 0 } \cong \hh^1(C,E) \oplus \End_{C\da C}E.\qedhere\]
\end{proof}

\section{Relation extensions}\label{sec:relation extensions}


From now on, we assume that the base field $k$ is algebraically closed, so that all our algebras are given by bound quivers.  Let $C$ be an algebra of global dimension at  most two. We recall from \cite{ABS1} that the \emph{relation extension} of $C$ is the trivial extension of $C$ by $E_2=\Ext^2_{C}(DC,C)$ equipped with its natural 
$C$-$C$-bimodule structure (as in \eqref{eq:actions E}). The best known class of relation extensions is provided by the 
cluster-tilted algebras: let $C$ be a tilted algebra, then the relation extension $B=C \ltimes E_2$ is cluster-tilted and every cluster-tilted algebra arises in this way, see \cite[Theorem 3.4]{ABS1}.
The purpose of this section is to prove that the main result of \cite{ABIS}, proved in case $B$ is cluster-tilted, remains valid for any relation extension of a triangular algebra of global dimension at most two.

The quiver of a relation extension is easily computed by means of \cite[Theorem 2.6]{ABS1},  which we restate for future reference. Recall that, if $C=kQ_C/I$ is a bound quiver algebra, then a \emph{system of relations} $R$ for $C$ is a subset of $\bigcup_{x,y \in (Q_C)_0}(e_xIe_y)$ such that $R$, but no proper subset of $R$, generates $I$ as a two-sided ideal.

\begin{lem}\label{lem quiver rel ext}
Let $C=kQ_C/I$ be an algebra of global dimension at most two and let $R$ be a system of relations for $C$. The quiver $Q_B$ of the relation extension $B= C \ltimes E_2$ is constructed as follows:
\begin{enumerate}
\item[(a)] $(Q_B)_0=(Q_C)_0$.
\item[(b)] For $x,y \in (Q_B)_0$, the set of arrows in $Q_B$ from $x$ to $y$ equals the set of arrows in $Q_C$ from $x$ to $y$ (called the \emph{old} arrows) plus $\card(R \cap e_y I e_x)$ additional arrows (called the \emph{new} arrows).
\end{enumerate}
\end{lem}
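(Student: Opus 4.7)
The statement splits into the vertex count (a) and the arrow count (b), which I would handle in sequence.

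\emph{Part (a).} Since $E_2$ is a nilpotent two-sided ideal of $B = C \ltimes E_2$ (indeed $E_2^2 = 0$), the section $q\colon C \hookrightarrow B$ transports a complete set of primitive orthogonal idempotents of $C$ to one of $B$. As both algebras are basic this forces $(Q_B)_0 = (Q_C)_0$.

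\emph{Part (b).} For any basic algebra $A$, the number of arrows of $Q_A$ from $x$ to $y$ equals $\dim_k e_x(\rad A/\rad^2 A)e_y$. A direct computation exploiting $E_2^2 = 0$ gives
$$\rad B = \rad C \oplus E_2, \qquad \rad^2 B = \rad^2 C \oplus \bigl( \rad C \cdot E_2 + E_2 \cdot \rad C \bigr),$$
whence a $C$-$C$-bimodule decomposition
$$\rad B / \rad^2 B \;\cong\; \rad C / \rad^2 C \;\oplus\; T(E_2),$$
where $T(E_2) := E_2 / (\rad C \cdot E_2 + E_2 \cdot \rad C)$ is the bimodule top of $E_2$. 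The first summand accounts for the old arrows of $Q_C$, so the statement reduces to proving
$$\dim_k e_x\, T(E_2)\, e_y \;=\; \card\bigl( R \cap e_y I e_x \bigr). \quad (\star)$$

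The main obstacle is $(\star)$, for which one needs an explicit grip on the bimodule top of $E_2 = \Ext^2_C(DC, C)$. My plan is to use the standard minimal projective bimodule presentation of $C$, which under the hypothesis $\mathrm{gldim}\,C \ppq 2$ has the form
$$\bigoplus_{r \in R} C e_{\mt(r)} \otimes_k e_{\mo(r)} C \longrightarrow \bigoplus_{\alpha \in Q_1} C e_{\mt(\alpha)} \otimes_k e_{\mo(\alpha)} C \longrightarrow \bigoplus_{i \in (Q_C)_0} C e_i \otimes_k e_i C \longrightarrow C \to 0.$$
Transferring this presentation across the identification $\Ext^2_C(DC,C) \cong \hh^2(C, \Hom_k(DC, C))$ (equivalently, deriving $-\otimes_C DC$ and then $\Hom_C(-, C)$) identifies each $r \in R$ with a generator of $T(E_2)$ sitting in the component $e_{\mt(r)}(-)\,e_{\mo(r)}$. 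Since a relation $r \in R \cap e_y I e_x$ satisfies $\mo(r) = y$ and $\mt(r) = x$, it contributes exactly one dimension to $e_x T(E_2) e_y$, and summing over $r \in R$ delivers $(\star)$.

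The most delicate part will be the bookkeeping: source/target indices swap between relations and new arrows, and the left versus right bimodule actions on $\Ext^2_C(DC,C)$ must be tracked carefully against the conventions used for the presentation above. Once these conventions are pinned down consistently, the identification of $T(E_2)$ with a direct sum of simple bimodules indexed by $R$ is just a direct reading of the degree-2 term of the resolution.
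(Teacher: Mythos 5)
The paper does not actually prove this lemma: it is quoted verbatim from \cite[Theorem 2.6]{ABS1} and used as a known fact, so the only thing to compare your argument against is that citation. Your proposal is essentially a reconstruction of the proof from that reference, and the strategy is sound: part (a) holds because $E_2$ is a nilpotent ideal, hence contained in $\rad B$, so $B/\rad B\cong C/\rad C$; and the reduction of (b) to computing the bimodule top of $E_2$ via $\rad B/\rad^2 B\cong \rad C/\rad^2 C\oplus E_2/(\rad C\cdot E_2+E_2\cdot \rad C)$ is exactly right. Two items hiding inside your ``bookkeeping'' deserve to be made explicit. First, with the paper's conventions (paths composed left to right, $P_C(x)=e_xC$) the relevant terms of the minimal bimodule resolution are $\bigoplus_{a\in Q_1}Ce_{\mo(a)}\otimes e_{\mt(a)}C$ and $\bigoplus_{r\in R}Ce_{\mo(r)}\otimes e_{\mt(r)}C$ (see the Green--Snashall resolution recalled in Section~\ref{sec:relation extensions}), not with source and target swapped as you wrote them; after applying $\Hom_{C\da C}(-,C\otimes_kC)$ the summand indexed by $r$ becomes $e_{\mo(r)}C\otimes_kCe_{\mt(r)}$, which for the \emph{inner} bimodule structure is a projective bimodule whose top sits in $e_{\mt(r)}(-)\,e_{\mo(r)}$ --- this is precisely what yields a new arrow from $\mt(r)$ to $\mo(r)$, matching the statement for $r\in e_yIe_x$. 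Second, and more substantively, identifying $T(E_2)$ with $\bigoplus_{r\in R}$ of these simple bimodules is not ``a direct reading'' of the resolution: minimality of the bimodule resolution concerns the outer structure, whereas $(\star)$ requires the image of the dualised differential $\Hom_{C\da C}(d^2,C\otimes_kC)$ to lie in the radical for the inner structure. This does hold, but only because each term of $d^2(e_{\mo(r)}\otimes e_{\mt(r)})$ has the form (path of length $j-1$)$\,\otimes\,$(path of length $s_i-j$) with $(j-1)+(s_i-j)=s_i-1\pgq 1$, so at least one tensor factor lies in $\rad C$; that short verification is the actual mathematical content of $(\star)$ and should appear in the write-up. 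You also use, implicitly, that $\mathrm{gldim}\,C\ppq 2$ makes $\Ext^2_C(DC,C)$ a genuine quotient (not just a subquotient) of the degree-two term --- worth stating, since it is where the hypothesis enters.
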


Thus, if $C$ is not hereditary, then $B$ is not triangular so we may define a  potential on its quiver, see \cite{DWZ}.  Let $R=\{ \rho_1, \cdots, \rho_t \}$. Because of Lemma~\ref{lem quiver rel ext}, to each relation $\rho_i$ from $x_i$ to $y_i$, say, there corresponds a new arrow $\alpha_i: y_i \rightarrow x_i$. The \emph{Keller potential} on $Q_B$ is $W_B=\sum_{i=1}^t \rho_i \alpha_i$. 
Potentials are considered up to cyclic permutations: two potentials are \emph{cyclically equivalent} if their difference lies in the linear span of all elements of the form $\alpha_1 \alpha_2 \cdots \alpha_m -\alpha_m\alpha_1 \alpha_2 \cdots \alpha_{m-1}$, where $\alpha_1 \alpha_2 \cdots \alpha_m $ is a cycle. For a given arrow $\beta$, the \emph{cyclic partial derivative} $\partial_{\beta}$ of a potential $W$ is defined on each cyclic summand $\beta_1 \cdots \beta_s$ of $W$ by $\partial_{\beta}(\beta_1 \cdots \beta_s)= \sum_{i: \, \beta \ne \beta_i} \beta_{i+1}\cdots \beta_s\beta_1 \cdots \beta_{i-1}$. In particular, the cyclic derivative is invariant under cyclic permutation. The \emph{Jacobian algebra} $\mathcal{J}(Q_B,W_B)$ is the one given by the quiver $Q_B$ bound by the ideal generated by all the cyclic partial derivatives $\partial_{\alpha} W_B$ of the Keller potential $W_B$ with respect to each arrow $\alpha \in (Q_B)_1$, see \cite{Keller}. We now derive a system of relations for $B$ (compare \cite[Proposition 3.19]{BFPPT}).

\begin{lem}
Let $C$ be an algebra of global dimension at most two, B its relation extension and $W_B$ the Keller potential, then $B \cong \mathcal{J}(Q_B,W_B)/J$ where $J$ is the square of the ideal generated by the new arrows.
\end{lem}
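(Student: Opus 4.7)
The plan is to construct a surjective $k$-algebra morphism $\pi\colon kQ_B\to B$ and then identify its kernel with $\langle \partial_\alpha W_B : \alpha\in (Q_B)_1\rangle + J$. By Lemma~\ref{lem quiver rel ext}, the vertices of $Q_B$ coincide with those of $Q_C$, the old arrows of $Q_B$ are in bijection with the arrows of $Q_C$, and the new arrows $\alpha_i$ are in bijection with the relations $\rho_i$ in the chosen system $R$. Under this identification the new arrows correspond to a minimal $C$-$C$-bimodule generating set $\{a_i\}$ of $E_2=\Ext^2_C(DC,C)$. I would set $\pi(e_x)=(e_x,0)$, $\pi(\beta)=(\beta,0)$ for each old arrow $\beta$, and $\pi(\alpha_i)=(0,a_i)$. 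Surjectivity is immediate since the image contains $C\oplus 0$ together with the generators $(0,a_i)$ of $E_2$ as a $C$-$C$-bimodule, which, multiplied on both sides by elements $(c,0)$, span $0\oplus E_2$.

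Next, I would verify that $\pi$ factors through $\mathcal{J}(Q_B,W_B)/J$. Since $E_2^2=0$ in $B$, any product of two new arrows maps to zero, so $\pi(J)=0$. For a new arrow $\alpha_i$, each cyclic summand $p_{ij}\alpha_i$ of $W_B$ contains $\alpha_i$ exactly once (as the final arrow), yielding $\partial_{\alpha_i}W_B=\rho_i$, and $\pi(\rho_i)=(\rho_i,0)=0$ because $\rho_i\in I$. The delicate point is $\pi(\partial_\beta W_B)=0$ for each old arrow $\beta$: each summand of $\partial_\beta W_B$ is a path of the form $q\,\alpha_i\,q'$ with $q,q'$ composed of old arrows, so $\pi$ sends it into $E_2$, and the required identity is a relation in the bimodule $E_2$. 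This is the main obstacle. It amounts to the statement that the old-arrow cyclic derivatives of the Keller potential furnish the defining bimodule relations in a presentation of $E_2$ with the $\{a_i\}$ as generators; since $\mathrm{gl.dim}\,C\ppq 2$, this can be checked via a minimal projective bimodule resolution of $C$ of length two, and it is the content of \cite[Proposition 3.19]{BFPPT} cited in the statement (extending the Keller--Reiten computation from the cluster-tilted case).

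To conclude I would apply a dimension argument. In $\mathcal{J}(Q_B,W_B)/J$ every path containing two or more new arrows is zero, so a $k$-spanning set is given by the images of paths in $Q_B$ with at most one new arrow. Paths with no new arrow, modulo the relations $\partial_{\alpha_i}W_B=\rho_i$, form a quotient of $C=kQ_C/I$; paths with exactly one new arrow, modulo the relations $\partial_\beta W_B$ for $\beta$ old, form a quotient of $E_2$. Hence
\[ \dim_k\mathcal{J}(Q_B,W_B)/J\ppq \dim_k C+\dim_k E_2=\dim_k B. \]
The surjective map $\overline{\pi}\colon \mathcal{J}(Q_B,W_B)/J\twoheadrightarrow B$ induced by $\pi$ then forces equality, so $\overline{\pi}$ is the desired isomorphism.
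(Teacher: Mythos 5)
Your overall strategy (build a surjection $kQ_B\twoheadrightarrow B$, kill the obvious relations, then close with a dimension count) is reasonable, but it is genuinely different from the paper's argument, and as written it has a real gap at exactly the point you flag as ``the main obstacle.'' The paper does not work with an explicit presentation at all: it invokes Keller's theorem that $\mathcal{J}(Q_B,W_B)$ is isomorphic to the endomorphism algebra of the tilting object $C$ in Amiot's generalised cluster category, and Amiot's theorem identifying that endomorphism algebra with the tensor algebra $T_C(E_2)$ of the bimodule $E_2=\Ext^2_C(DC,C)$; the quotient by the ideal generated by all $E_2^{\otimes m}$ with $m\pgq 2$ (which is $J$) is then literally $C\ltimes E_2=B$. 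All the content of the lemma is concentrated in those two cited theorems.

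In your version that same content resurfaces as the claim that the cyclic derivatives $\partial_\beta W_B$ for $\beta$ an old arrow cut out exactly the bimodule $E_2$ inside the span of paths with one new arrow, and you do not prove it --- you defer it to \cite[Proposition 3.19]{BFPPT} and to an unspecified check on a length-two projective bimodule resolution. Worse, you only discuss one direction of what is needed. For $\pi$ to descend you need $\pi(\partial_\beta W_B)=0$, i.e.\ that these derivatives \emph{are} relations holding in $E_2$. But your dimension bound $\dim_k\mathcal{J}(Q_B,W_B)/J\ppq\dim_kC+\dim_kE_2$ needs the \emph{converse}: that the $\partial_\beta W_B$ (together with $J$) generate \emph{all} the relations of $E_2$ over $C$, so that the one-new-arrow part is a quotient of $E_2$ rather than merely mapping onto it. (Surjectivity of $\overline{\pi}$ already forces that part to have dimension at least $\dim_kE_2$, so the inequality you write is precisely the nontrivial completeness statement.) Your text conflates these two inclusions of relation ideals. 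Establishing the completeness direction from a minimal projective bimodule resolution of $C$ is essentially equivalent to re-proving the Keller--Amiot identification of $\mathcal{J}(Q_B,W_B)$ with $T_C(E_2)$, so the proof as it stands either is circular (if the cited proposition is taken to be the lemma itself) or leaves the heart of the argument unaddressed.
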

\begin{proof}
It was shown by Keller in \cite[Theorem 6.12 (a)]{Keller} that $ \mathcal{J}(Q_B,W_B)$ is isomorphic to the endomorphism algebra of the tilting object $C$ in the (Amiot's generalised) cluster category associated with the algebra $C$. On the other hand, Amiot proved in \cite[Proposition 4.7]{A} that this endomorphism algebra is isomorphic to the tensor algebra of the bimodule $E_2=\Ext_C^2(DC,C)$. Also, she proved that the quiver of the tensor algebra of $E_2$ is isomorphic to the quiver of $\mathcal{J}(Q_B,W_B)$. Taking now the quotient of the tensor algebra by the two-sided ideal $J$ generated by all tensor powers $E_2^{\otimes  m}$ with $ m \pgq 2$, we get exactly the algebra $B$. But now $J$ is the square of the ideal generated by the new arrows.
\end{proof}

Thus $B \cong kQ_B/I_B$ where $Q_B$ is computed as in Lemma~\ref{lem quiver rel ext}, while $I_B$ is generated by all the cyclic partial derivatives $\partial_{\alpha} W_B$ with $\alpha \in (Q_B)_1$, and  all the paths in $Q_B$ containing at least two new arrows.

\begin{example}\label{ex:from ABS1} We borrow this example from \cite[Example 2.7]{ABS1}. Let $C$ be given by the quiver 
\[\xymatrix@R=10pt{ & 2 \ar[rd]^{\beta} & \\
1\ar[ru]^{\alpha} \ar@/_/[rr]_{ \gamma } &  & 3
}
\] bound by $\alpha \beta =0$. Applying Lemma~\ref{lem quiver rel ext}, we  get a new arrow $\delta: 3 \rightarrow 1$. The quiver $Q_B$ of $B= C \ltimes E_2$ is therefore
\[\xymatrix@R=10pt{ & 2 \ar[rd]^{\beta} & \\
1\ar[ru]^{\alpha} \ar@/_/[rr]_{ \gamma } &  & 3  \ar@/_5pt/[ll]_{\delta}
}
\] We have $W_B= \alpha \beta \delta$ while $J=< \delta>^2 $. Thus $B$ is given by the quiver $Q_B$ bound by $\delta \alpha=0, \, \alpha \beta=0, \, \beta \delta=0$ and $\delta  \gamma  \delta=0$. Note that $B$ is not a cluster-tilted algebra.
\end{example}

\begin{lem}\label{lem caract cale E} Let $B$ be the trivial extension of an algebra $C$ by a $C$-$C$-bimodule $E$. Then $f \in \cale(E)$ if and only if there exists a derivation $\bar{f}: B \rightarrow C$ which restricts to $f$ and vanishes on $C$.
\end{lem}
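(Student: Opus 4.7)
The plan is to construct the bijection explicitly and check it directly from the multiplication in $B=C\ltimes E$. Given $f\in\cale(E)$, define $\bar f\colon B\to B$ taking values in $q(C)\cong C$ by the formula $\bar f(c,x):=qf(x)$; this automatically satisfies $\bar f|_{q(C)}=0$ and $\bar f|_{i(E)}=f$ after identification. Conversely, any derivation $\bar f$ that vanishes on $q(C)$ and takes values in $q(C)$ has restriction $f:=p\bar f i\colon E\to C$. The whole point is that the Leibniz rule for $\bar f$ on $B$, once unpacked, becomes precisely the defining relation $f(x)x'+xf(x')=0$ of $\cale(E)$.

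For the forward direction, suppose $f\in \cale(E)$ and define $\bar f$ as above. For $b=(c,x)$ and $b'=(c',x')$ in $B=C\ltimes E$, the trivial extension formula gives $bb'=(cc',\,cx'+xc')$, so
\[
\bar f(bb')=qf(cx'+xc')=q\bigl(cf(x')+f(x)c'\bigr),
\]
using that $f$ is a $C$-$C$-bimodule morphism. On the other side, a direct computation in $B$ yields
\[
b\,\bar f(b')+\bar f(b)\,b'=(c,x)(f(x'),0)+(f(x),0)(c',x')=\bigl(cf(x')+f(x)c',\ xf(x')+f(x)x'\bigr).
\]
The two expressions coincide exactly when the $E$-component on the right vanishes, i.e.\ when $xf(x')+f(x)x'=0$ for all $x,x'\in E$, which is the condition defining $\cale(E)$.

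For the reverse direction, let $\bar f$ be a derivation vanishing on $q(C)$ whose restriction to $i(E)$ corresponds to $f\colon E\to C$. First, applying the Leibniz rule to the products $q(c)i(x)=i(cx)$ and $i(x)q(c)=i(xc)$, together with $\bar f\circ q=0$, shows that $f(cx)=cf(x)$ and $f(xc)=f(x)c$, so $f\in\Hom_{C\da C}(E,C)$. Second, since $E^2=0$ we have $i(x)i(x')=0$ in $B$; applying Leibniz gives $i(x)\bar f(i(x'))+\bar f(i(x))i(x')=0$, which when expanded in $B$ is exactly $(0,\,xf(x')+f(x)x')=0$, whence $f\in\cale(E)$.

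The computations themselves are routine. The only real subtlety, and the main thing to get right, is the bookkeeping separating $E$ as a subspace of $B$ (via $i$) from $E$ as a $C$-bimodule, and $C$ as the subspace $q(C)\subset B$ from $C$ as the quotient of $B$; once this is pinned down, the $\cale$ condition emerges directly as the derivation axiom applied on the part $E\cdot E=0$.
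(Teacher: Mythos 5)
Your proof is correct and follows essentially the same route as the paper: define $\bar f(c,x)=f(x)$ (valued in $q(C)\subseteq B$), observe that the Leibniz rule on the $E$-component is exactly the condition $xf(x')+f(x)x'=0$, and conversely recover the bimodule property and the $\cale(E)$ relation from the products $cx$, $xc$ and $xx'=0$. The extra care you take in distinguishing $i(E)$, $q(C)$ and the quotient $C$ is exactly the identification the paper makes implicitly.
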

\begin{proof}
Assume $f \in \cale(E)$ and define $\bar{f}: B \rightarrow C$ by $\bar{f}(c,x)=f(x)$ for $c \in C, \, x \in E$. Clearly, $\bar{f}$ vanishes on $C$. We prove that $\bar{f}$ is a derivation. Let $(c,x),\, (c',x') \in B$, then 
\begin{eqnarray*} (c,x) \bar{f}(c',x') + \bar{f}(c,x) (c',x') & = & (c,x) f(x')+ f(x) (c',x') \\
 &=& (cf(x')+f(x) c',x f(x') + f(x) x' ) \\
&=&(cf(x') + f(x) c',0)
 \end{eqnarray*} 
because $f \in \cale(E)$. Using that $f$ is a morphism of bimodules, we get

\begin{eqnarray*} (c,x) \bar{f}(c',x') + \bar{f}(c,x) (c',x') & = & (f(cx'+xc'),0) \\
&=& \bar{f}(cc',cx'+xc') \\
&=& \bar{f}((c,x)(c',x')).
\end{eqnarray*}

This establishes our claim.

Conversely, let $f: B \rightarrow C$ be a derivation which vanishes on $C$. Then $f$ restricts to a $k$-linear map from $E$ to $C$. We claim that it is in fact a morphism of $C$-$C$-bimodules. Let $c \in C, \, x \in E$. Viewing both as elements of $B$ and using that $f$ is a derivation on $B$, we get
$$ f(cx)=f(c)x+cf(x)=cf(x)$$
because $f(c)=0$. Similarly, $f(xc)=f(x)c$. There remains to prove that $f \in \cale(E)$. Let $x,x' \in E$. Since $f$ is a derivation, we have 
$$ xf(x')+f(x)x'=f(xx')=f(0)=0$$
because $xx' \in E^2=0$.
 \end{proof}

\begin{lem} \label{lem: cale E trivial}
Let $B$ be the relation extension of a triangular algebra $C$ of global dimension at most two by the $C$-$C$-bimodule $E_2$. Then $\cale(E_2)=0$.
\end{lem}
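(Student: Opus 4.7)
The plan is to leverage Lemma~\ref{lem caract cale E} to reformulate the claim in terms of derivations: $\cale(E_2)=0$ if and only if every derivation $\bar f\colon B\to C$ which vanishes on $C$ is the zero map. So I would start by fixing such a $\bar f$ and show it vanishes on each generator of $B$ as a $k$-algebra.

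The algebra $B=kQ_B/I_B$ is generated by the primitive idempotents $e_x$ and by the arrows of $Q_B$, which by Lemma~\ref{lem quiver rel ext} split into the old arrows (the arrows of $Q_C$) and the new arrows. The idempotents $e_x$ lie in $C$, so $\bar f(e_x)=0$; the old arrows also lie in $C$, so $\bar f$ vanishes on them as well. It therefore remains to show that $\bar f(\alpha)=0$ for every new arrow $\alpha$.

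The key observation is that if $\alpha\colon x\to y$ is a new arrow of $Q_B$, then by Lemma~\ref{lem quiver rel ext} it corresponds to a relation $\rho\in R\cap e_y I e_x$ of $C$, that is, a nonzero linear combination of paths from $y$ to $x$ in $Q_C$. In particular there exists at least one path from $y$ to $x$ in $Q_C$. Now $\alpha=e_x\alpha e_y$, and since $\bar f$ is a derivation vanishing on the idempotents, one has $\bar f(\alpha)=e_x \bar f(\alpha) e_y \in e_x C e_y$. Because $C$ is triangular, $Q_C$ is acyclic, so the existence of a path from $y$ to $x$ precludes the existence of any path from $x$ to $y$; hence $e_x C e_y=0$, forcing $\bar f(\alpha)=0$.

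Since $\bar f$ annihilates a generating set of $B$, the derivation property gives $\bar f=0$, and therefore $f=\bar f|_E=0$, as required. I do not anticipate any significant obstacle: the argument is essentially a direct translation of the triangularity hypothesis into the vanishing of the relevant Hom space, once the correspondence between new arrows of $Q_B$ and relations of $C$ (with reversed direction) is invoked. The one point that deserves care is tracking the convention of Lemma~\ref{lem quiver rel ext}, namely that a new arrow $\alpha\colon x\to y$ corresponds to a relation going from $y$ to $x$, so that the relevant Hom-space to rule out is $e_x C e_y$ rather than $e_y C e_x$.
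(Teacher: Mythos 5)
Your proposal is correct and follows essentially the same route as the paper: extend $f$ to a derivation $\bar f\colon B\to C$ vanishing on $C$ via Lemma~\ref{lem caract cale E}, observe that $\bar f$ sends a new arrow $\alpha\in e_xE_2e_y$ into $e_xCe_y$, and use the existence of a relation (hence a path) from $y$ to $x$ together with acyclicity of $Q_C$ to conclude $e_xCe_y=0$. Your explicit attention to the direction convention of Lemma~\ref{lem quiver rel ext} matches the paper's argument exactly.
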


\begin{proof}
Let $f \in \cale(E_2)$. Because of Lemma~\ref{lem caract cale E}, $f$ extends to a derivation $\bar{f}: B \rightarrow C$ which vanishes on $C$. Let $ \gamma : x \rightarrow y$ be a new arrow, that is, a generator of $E_2$ as $C$-$C$-bimodule. Then $\bar{f}$ sends $ \gamma  \in e_x E_2 e_y$ into $e_x C e_y$: indeed, $\bar{f}( \gamma )=\bar{f}(e_x  \gamma  e_y)=e_x \bar{f}( \gamma ) e_y$ because $\bar{f}(e_x)=\bar{f}(e_y)=0$, both elements $e_x,e_y$ lying in $C$. Now, the existence of a new arrow $ \gamma : x \rightarrow y$ means that there exists a path from $y $ to $x$ (actually, there exists a relation from $y$ to $x$) in $C$, that is, formed entirely of old arrows. Because $C$ is triangular, we have $e_x C e_y=0$ and so $\bar{f}=0$. This finishes the proof.
\end{proof}

\begin{cor}
Let $B$ be the relation extension of a triangular algebra $C$ of global dimension at most two by the $C$-$C$-bimodule $E_2$. Then $\ker\delta_B^1\cong \HH^{1}(C).$
\end{cor}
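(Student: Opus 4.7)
The plan is to obtain this corollary as a direct assembly of three previously established facts, with essentially no extra work required. The target is to compute $\ker \delta_B^1$, where $\delta_B^1 \colon \hh^1(B,C) \to \hh^2(B,E_2)$ is the connecting morphism of the long exact sequence~\eqref{eq:long cohomology sequence} associated with the split extension $0 \to E_2 \to B \to C \to 0$. The natural tool is Lemma~\ref{lem ker connecting morphism}, which, provided one knows that $\delta^{0,1}=0$, delivers the decomposition $\ker \delta_B^1 = \HH^1(C) \oplus \cale(E_2)$.

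The first step is therefore to check the hypothesis $\delta^{0,1}=0$. By Proposition~\ref{prop: SC for surjectivity}, this is equivalent to the surjectivity of $\varphi^1 \colon \HH^1(B) \to \HH^1(C)$. Now, since $B$ is assumed to be the relation extension of $C$, we have $E = E_2 = \Ext^2_C(DC,C)$, which is precisely the setting of Theorem~\ref{cor:phi surjective Es} with $m=2$. That theorem (which, notably, does \emph{not} require triangularity of $C$) yields the surjectivity of $\varphi^1$, and hence $\delta^{0,1}=0$, as needed.

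Applying Lemma~\ref{lem ker connecting morphism} then gives $\ker \delta_B^1 \cong \HH^1(C) \oplus \cale(E_2)$. The final step is to invoke Lemma~\ref{lem: cale E trivial}, which asserts that $\cale(E_2)=0$ whenever $C$ is triangular of global dimension at most two. Combining these two isomorphisms yields $\ker\delta_B^1 \cong \HH^1(C)$, as claimed.

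In other words, there is no genuine obstacle here: all the substantive content has already been absorbed into the preceding lemmas. The only point worth emphasising is that the two hypotheses on $C$ play distinct roles: the hypothesis $\operatorname{gl.dim} C \leqslant 2$ is what makes $B = C \ltimes E_2$ a \emph{relation extension} so that Theorem~\ref{cor:phi surjective Es} can kill $\delta^{0,1}$, while the triangularity hypothesis is what makes $\cale(E_2)$ vanish via Lemma~\ref{lem: cale E trivial}. One could, if desired, trace the isomorphism through Lemma~\ref{cor:cohom C embeds B} to see that $\HH^1(C)$ embeds into $\ker\delta_B^1$ via $[f]\mapsto [fp]$, but this is not required for the stated isomorphism.
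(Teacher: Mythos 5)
Your proof is correct and follows exactly the route the paper intends: the corollary is stated immediately after Lemma~\ref{lem: cale E trivial} precisely so that it follows from Lemma~\ref{lem ker connecting morphism} (whose hypothesis $\delta^{0,1}=0$ is supplied by Theorem~\ref{cor:phi surjective Es} via Proposition~\ref{prop: SC for surjectivity}) together with the vanishing of $\cale(E_2)$. Your closing remark correctly separates the roles of the two hypotheses on $C$; nothing is missing.
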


We are now able to prove the main result of  this section which is our Theorem~\ref{thmIntro:case triangular}.

\begin{thm} \label{thm ses triangular case} Let $B$ be the relation extension of a triangular algebra $C$ of global dimension at most two by the $C$-$C$-bimodule $E_2$. Then we have short exact sequences
\begin{enumerate}[(a)]
\item $0\rightarrow \hh^0(B,E_2)\longrightarrow \HH^0(B) \xrightarrow{\varphi^0} \HH^{0}(C)\rightarrow 0.$
\item $0\rightarrow \hh^1(B,E_2)\longrightarrow \HH^1(B) \xrightarrow{\varphi^1} \HH^{1}(C)\rightarrow 0.$
\end{enumerate}
\end{thm}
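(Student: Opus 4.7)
The plan is to assemble the machinery already in place: Lemma \ref{lem:phi0 ses}, Theorem \ref{thm:ker phi for trivial extension}, Theorem \ref{cor:phi surjective Es} and Lemma \ref{lem: cale E trivial} together encode essentially all the work, so the proof reduces to verifying that each hypothesis of the two key short-exact-sequence statements holds in the relation-extension setting.

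First I would observe that, since $C$ is triangular, its quiver is acyclic and therefore $Z(C) \cong k$. Consequently, any $C$-$C$-bimodule, and in particular $E_2 = \Ext^2_C(DC, C)$, is automatically symmetric over $Z(C)$. This is the hypothesis needed to feed into both Lemma \ref{lem:phi0 ses} and Theorem \ref{thm:ker phi for trivial extension}.

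For part (a), applying Lemma \ref{lem:phi0 ses} with $E = E_2$ immediately yields the short exact sequence $0 \to \hh^0(B, E_2) \to \HH^0(B) \xrightarrow{\varphi^0} \HH^0(C) \to 0$, so this half is essentially a one-line appeal. For part (b), I would verify the two hypotheses of Theorem \ref{thm:ker phi for trivial extension}: surjectivity of $\varphi^1$ is given by Theorem \ref{cor:phi surjective Es} applied with $m = 2$, while symmetry of $E_2$ over $Z(C)$ was noted above. This gives a short exact sequence $0 \to \hh^1(B, E_2) \oplus \cale(E_2) \to \HH^1(B) \xrightarrow{\varphi^1} \HH^1(C) \to 0$. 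Finally, to collapse the kernel to just $\hh^1(B, E_2)$, I would invoke Lemma \ref{lem: cale E trivial}, which crucially uses triangularity of $C$ together with $\textup{gl.dim}\, C \leq 2$ to force $\cale(E_2) = 0$.

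The main conceptual step in the overall argument, although already done in the preceding lemmas, is the vanishing $\cale(E_2) = 0$: it is what distinguishes the relation extension case from the general trivial-extension case, where the kernel of $\varphi^1$ genuinely contains the extra $\cale(E)$ summand (as Example \ref{ex explicit ses continued} illustrates for $E = DC$). Given the infrastructure, no further obstacles remain, and the theorem drops out by combining the four cited results in the order described above.
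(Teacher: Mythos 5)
Your proposal is correct and follows essentially the same route as the paper: part (a) via Lemma~\ref{lem:phi0 ses} using $Z(C)\cong k$, and part (b) by combining Theorem~\ref{thm:ker phi for trivial extension} with the vanishing $\cale(E_2)=0$ from Lemma~\ref{lem: cale E trivial}. You are in fact slightly more explicit than the paper in citing Theorem~\ref{cor:phi surjective Es} (with $m=2$) for the surjectivity hypothesis of Theorem~\ref{thm:ker phi for trivial extension}, which the paper leaves implicit.
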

\begin{proof} 
Part (a) follows from Lemma~\ref{lem:phi0 ses}, since $E_2$ is symmetric over $Z(C)=k$. The short exact sequence in (b) follows from Lemma~\ref{lem: cale E trivial} and Theorem~\ref{thm:ker phi for trivial extension}.
\end{proof}

We give an application to cluster-tilted algebras.

\begin{cor}\label{cor: HH*}  Let $B$ be a cluster-tilted algebra and let $C$ be a tilted algebra such that $B=C\ltimes E_2.$ Then the algebra morphism $\varphi^*:\HH^*(B)\rightarrow \HH^*(C)$ is surjective. Moreover, there is an exact sequence 
\[ 0\rightarrow \hh^0(B,E_2)\oplus \hh^1(B,E_2)\rightarrow\HH^*(B)\xrightarrow{\varphi^*}\HH^*(C)\rightarrow 0. \]

\end{cor}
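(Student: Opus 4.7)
The plan is to combine Theorem~\ref{thm ses triangular case} with Happel's theorem on tilted algebras. First I would recall that any tilted algebra $C$ is derived equivalent to a hereditary algebra $kQ$ for some finite acyclic quiver $Q$; since Hochschild cohomology is invariant under derived equivalence and hereditary algebras satisfy $\HH^n(kQ)=0$ for all $n\pgq 2$, we obtain $\HH^n(C)=0$ for all $n\pgq 2$. Hence $\HH^*(C)=\HH^0(C)\oplus\HH^1(C)$ is concentrated in degrees $0$ and $1$.

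Since tilted algebras are triangular of global dimension at most two and $B=C\ltimes E_2$, Theorem~\ref{thm ses triangular case} applies and yields the short exact sequences
\begin{align*}
0 &\to \hh^0(B,E_2) \to \HH^0(B) \xrightarrow{\varphi^0} \HH^0(C) \to 0,\\
0 &\to \hh^1(B,E_2) \to \HH^1(B) \xrightarrow{\varphi^1} \HH^1(C) \to 0.
\end{align*}
In particular $\varphi^0$ and $\varphi^1$ are surjective with the indicated kernels. For $n\pgq 2$ the target $\HH^n(C)$ vanishes, so $\varphi^n$ is trivially surjective. Together these observations establish the surjectivity of the graded algebra morphism $\varphi^*$, which is the first assertion of the corollary.

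For the exact sequence I would assemble the information degree by degree: the kernel of $\varphi^*$ is supported in degrees $0$ and $1$ and given there by $\hh^0(B,E_2)$ and $\hh^1(B,E_2)$ respectively, while $\HH^*(C)$ is itself concentrated in those same two degrees. Summing the two short exact sequences above then produces the single exact sequence stated. I expect the main substantive step to be the derived-invariance input that collapses $\HH^*(C)$ to degrees $\ppq 1$; once that reduction is in hand, Theorem~\ref{thm ses triangular case} supplies everything else essentially by direct summation.
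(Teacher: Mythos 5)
Your argument is correct and follows the paper's proof essentially verbatim: the paper likewise deduces $\HH^n(C)=0$ for all $n\pgq 2$ from Happel's theorem on (piecewise) hereditary algebras --- which you simply re-derive via derived invariance --- and then applies Theorem~\ref{thm ses triangular case} in degrees $0$ and $1$, surjectivity in higher degrees being automatic. This is the same approach as the paper, only with the citation to Happel unpacked.
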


\begin{proof} By \cite{H2}, since $C$ is tilted, $\HH^n(C)=0$ for all $n\pgq 2$. The result then follows from Theorem~\ref{thm ses triangular case}.
\end{proof}

We return to the more general situation of relation extensions.

\begin{cor}\label{cor:decomp ker phi} Let $B$ be the relation extension of a triangular algebra $C$ of global dimension at most two by the $C$-$C$-bimodule $E_2$. Then we have a short exact sequence
\[ 0\longrightarrow \hh^1(C,E_2) \oplus \End_{C\da C}E_2  \longrightarrow \HH^1(B)\xrightarrow{\varphi^1} \HH^1(C)\rightarrow0. \]
\end{cor}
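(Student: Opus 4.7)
The proof I would give is essentially a one-line combination of two results already established in the paper. The plan is to invoke Theorem~\ref{thm ses triangular case}(b), which provides the short exact sequence
\[ 0\rightarrow \hh^1(B,E_2)\longrightarrow \HH^1(B) \xrightarrow{\varphi^1} \HH^{1}(C)\rightarrow 0, \]
valid under exactly the hypotheses of the present corollary (namely, $C$ triangular of global dimension at most two, $B$ its relation extension, and $E_2=\Ext_C^2(DC,C)$). The only remaining task is to rewrite the kernel term $\hh^1(B,E_2)$ in terms of data intrinsic to $C$ and $E_2$.

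For this, I would appeal to Proposition~\ref{prop:decomp H1 B E}, which applies to any trivial extension $B=C\ltimes E$ of an algebra $C$ by a $C$-$C$-bimodule $E$ and yields the $k$-vector space isomorphism
\[ \hh^1(B,E) \cong \hh^1(C,E) \oplus \End_{C\da C}E. \]
Specialising to $E=E_2$ (the relation bimodule is in particular a $C$-$C$-bimodule, and $B=C\ltimes E_2$ is a trivial extension by definition of the relation extension), we obtain
\[ \hh^1(B,E_2)\cong \hh^1(C,E_2)\oplus \End_{C\da C}E_2. \]
Substituting this identification into the kernel of the short exact sequence from Theorem~\ref{thm ses triangular case}(b) gives the announced short exact sequence
\[ 0\longrightarrow \hh^1(C,E_2)\oplus \End_{C\da C}E_2\longrightarrow \HH^1(B)\xrightarrow{\varphi^1}\HH^1(C)\rightarrow 0. \]

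There is essentially no obstacle here: the two ingredients have already been proved, and the corollary is a cosmetic reformulation. The only minor point to check is that the hypotheses of Proposition~\ref{prop:decomp H1 B E} are satisfied in our situation, which they are trivially since that proposition requires only that $B=C\ltimes E$ be a trivial extension, with no further assumption on $C$ or $E$. No triangularity or global-dimension assumption enters the decomposition of $\hh^1(B,E_2)$; those hypotheses are used solely to guarantee the surjectivity of $\varphi^1$ and the vanishing $\cale(E_2)=0$ needed upstream in Theorem~\ref{thm ses triangular case}(b).
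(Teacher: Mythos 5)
Your proposal is correct and is precisely the paper's own argument: the authors also prove this corollary by combining the short exact sequence of Theorem~\ref{thm ses triangular case}(b) with the decomposition $\hh^1(B,E_2)\cong \hh^1(C,E_2)\oplus \End_{C\da C}E_2$ from Proposition~\ref{prop:decomp H1 B E}. Nothing further is needed.
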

\begin{proof} This follows from Theorem~\ref{thm ses triangular case} and Proposition~\ref{prop:decomp H1 B E}.
\end{proof}

\begin{example}\label{ex:ABS1 continued} Let $C$ and $B$ be the algebras in Example~\ref{ex:from ABS1}.

We first claim that $\hh^1(C,E_2)=0$. It suffices to prove that $\Der_0(C,E_2)=0$. Indeed, for any old arrow $\xi \in \{ \alpha, \beta,  \gamma \}$ from $x$ to $y$, and $d \in \Der_0(C,E_2)$, we have $d(\xi)= e_x d(\xi)e_y \in e_x E_2 e_y$. Therefore there exists a path from $x$ to $y$ in $B$ passing through the new arrow $\delta$ and parallel to the arrow $\xi$. Now any such path is easily seen to be zero. Therefore $\Der_0(C,E_2)=0$ and so $\hh^1(C,E_2)=0$.

This implies that $\hh^1(B,E_2) \cong \End_{C\da C}E_2$. Now, the $C$-$C$-bimodule $E_2$ has simple top generated by the new arrow $\delta$ and is actually $4$-dimensional, with basis $\{ \delta, \delta  \gamma ,  \gamma  \delta,  \gamma  \delta  \gamma  \}$. Therefore $\End_{C\da C}E_2 \cong  k$ and so $\hh^1(B,E_2) \cong  k$. Because we also have $\HH^1(C) \cong  k$ as can be seen, for instance, using D.~Happel's sequence in \ref{subsec:Hochschild cohomology}, we get $\HH^1(B) \cong k^2$.
\end{example}

The following corollary gives a lower bound on the increase in the dimension of the Hochschild cohomology group (compare with \cite[4.3]{ABIS}). 

\begin{cor} Let $B$ be the trivial extension of an algebra $C$ by a $C$-$C$-bimodule $E$ which is symmetric over $Z(C)$. Assume that $E=\bigoplus_{i=1}^nE_i$ is a direct sum decomposition into indecomposable summands as a $C$-$C$-bimodule. Then 
\begin{enumerate}[(a)]
\item $\dim_k\HH^1(B)-\dim_k\HH^1(C)\pgq n$.
\item Equality holds in (a) if, and only if, $\hh^1(C,E)=0$, $\cale(E)=0$ and 
\[ \dim_k\Hom_{C\da C}(E_i,E_j)=
\begin{cases}
k&\text{if $i=j$}\\0&\text{if $i\neq j$}.
\end{cases}
 \]
\end{enumerate}
\end{cor}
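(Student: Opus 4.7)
The plan is to combine Theorem~\ref{thm:ker phi for trivial extension} with Proposition~\ref{prop:decomp H1 B E} to express the dimension difference as a sum of non-negative integers, and then use indecomposability of the $E_i$ together with the algebraic closure of $k$ to bound $\dim_k\End_{C\da C}E$ from below by $n$. Parts (a) and (b) will then fall out of the resulting identity by comparing summands.

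First, I would invoke Theorem~\ref{thm:ker phi for trivial extension} — whose hypothesis that $\varphi^1$ is surjective is inherited from the ambient context in which this corollary is applied, notably that of relation extensions via Theorem~\ref{thm ses triangular case} — to obtain
\[ \dim_k\HH^1(B) - \dim_k\HH^1(C) = \dim_k\hh^1(B,E) + \dim_k\cale(E). \]
Applying Proposition~\ref{prop:decomp H1 B E} then refines this to
\[ \dim_k\HH^1(B) - \dim_k\HH^1(C) = \dim_k\hh^1(C,E) + \dim_k\End_{C\da C}E + \dim_k\cale(E). \qquad (\star) \]

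Next I would bound $\dim_k\End_{C\da C}E$ from below by $n$. The decomposition $E = \bigoplus_{i=1}^n E_i$ gives $\End_{C\da C}E = \bigoplus_{i,j}\Hom_{C\da C}(E_i,E_j)$, so it suffices to show that $\dim_k\End_{C\da C}E_i \pgq 1$ for every $i$. Since $k$ is algebraically closed (as assumed from the beginning of Section~\ref{sec:relation extensions}) and each $E_i$ is a finite-dimensional indecomposable module over the finite-dimensional $k$-algebra $C \otimes_k C^{\op}$, Fitting's lemma implies that $\End_{C\da C}E_i$ is a local $k$-algebra with residue field $k$, which suffices. Combined with $(\star)$, this proves (a).

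For (b), identity $(\star)$ expresses the dimension difference as a sum of non-negative quantities, so the equality $\dim_k\HH^1(B) - \dim_k\HH^1(C) = n$ is equivalent to $\hh^1(C,E) = 0$, $\cale(E) = 0$, and $\dim_k\End_{C\da C}E = n$. Given the lower bounds $\dim_k\End_{C\da C}E_i \pgq 1$, the last equality occurs precisely when each diagonal summand has dimension exactly one (hence equals $k$, being a local finite-dimensional $k$-algebra) and each off-diagonal $\Hom_{C\da C}(E_i,E_j)$ with $i\neq j$ vanishes — which is the stated condition. The argument is essentially bookkeeping once Theorem~\ref{thm:ker phi for trivial extension} is available; the one delicate point to bear in mind is that this theorem requires $\varphi^1$ to be surjective, so the corollary should be read as applying in settings where that surjectivity is known (in particular, in the relation extension setting motivating this section).
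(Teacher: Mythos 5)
Your proposal is correct and follows essentially the same route as the paper: combining Theorem~\ref{thm:ker phi for trivial extension} with Proposition~\ref{prop:decomp H1 B E} (which together give Corollary~\ref{cor:decomp ker phi}) to write $\dim_k\HH^1(B)-\dim_k\HH^1(C)$ as the sum $\dim_k\hh^1(C,E)+\dim_k\End_{C\da C}E+\dim_k\cale(E)$, then bounding $\dim_k\End_{C\da C}E$ below by $n$; the paper simply observes that the $\id_{E_i}$ are $n$ linearly independent elements, so your appeal to Fitting's lemma is harmless but unnecessary for part (a). You are also right to flag that surjectivity of $\varphi^1$ is an implicit hypothesis needed for the dimension identity.
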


\begin{proof}
\begin{enumerate}[(a)]
\item This follows from Corollary~\ref{cor:decomp ker phi} and from the fact that the $\id_{E_i}$ provide $n$ linearly independent elements in $\End_{C\da C}E.$
\item This follows from the facts that $\dim_k\End_{C\da C}E\pgq n$ and that equality holds if, and only if, $n=\dim_k\End_{C\da C}E+\dim_k\hh^1(C,E)+\dim_k\cale(E).$
\qedhere\end{enumerate}
\end{proof}

Even in the case of a relation extension, when $\varphi^1$ is surjective, the higher Hochschild projection morphisms need not be surjective, as Example~\ref{ex:phi2 not surjective} below illustrates. In order to give this example, we need some background on minimal projective resolutions.

 It is well-known that  $\hh^i(C,E)=\Ext^i_{C\da C}(C,E)$. This means in particular that in order to compute the Hochschild cohomology spaces, we can compute the Hochschild cohomology of the complex $\left(\Hom_{C\da C}(\sfp^*,E),\Hom_{C\da C}(d^*,E)\right)$, where $(\sfp^*,d^*)$ is any projective resolution of the $C$-$C$-bimodule $C.$ Traditionally, we choose $\sfp^*=\sfbar^*$, the bar resolution, given by $\sfbar^n=C^{\otimes{n+2}}$ with differential $d^n:\sfbar^{n+1}\rightarrow\sfbar^{n}$ defined by 
\[ c_0\otimes\cdots\otimes c_{n+2}=\sum_{i=0}^{n+1}(-1)^ic_0\otimes \cdots\otimes c_ic_{i+1}\otimes \cdots\otimes c_{n+2} .\] Applying $\Hom_{C\da C}(-,E)$ to this resolution and using the isomorphism $\Hom_{C\da C}(C\otimes V\otimes C,E)\cong \Hom_k(V,E)$ for any vector space $V$ gives the complex that we have been using throughout this paper.

However, for the computation of higher Hochschild cohomology groups, the bar resolution is often too large, and it is therefore necessary to choose for $\sfp^*$ a minimal projective resolution of $C$. If $C=kQ/I$ is a bound quiver algebra, it was proved in \cite{H} that if $\sfp^*$ is a minimal projective resolution of the $C$-$C$-bimodule $C,$ then $\sfp^n=\bigoplus_{x,y\in Q_0}\left(Ce_x\otimes e_yC\right)^{\dim_k\Ext_C^n(S_C(x),S_C(y))}$. However, this result does not give any information on the maps $d^n:\sfp^n\rightarrow \sfp^{n-1}$, which are needed in order to compute the Hochschild cohomology spaces explicitly. Finding these maps is in fact a difficult problem, solved only in some special cases such as stacked monomial algebras  in \cite{GS2}. However,  we shall only  compute $\HH^1(C)$ and $\HH^2(C)$, and a minimal projective resolution $(\sfp^*,d^*)$ with maps up to $(\sfp^3,d^3)$ has been described by  in \cite{GS1}. Let $R$ be  a system of relations for $C=kQ/I$. Set 
\begin{align*}
&\sfp^0=\bigoplus_{x\in Q_0}Ce_x\otimes e_xC, \\ &\qquad d^0(e_x\otimes e_x)=e_x\\
&\sfp^1=\bigoplus_{a\in Q_1}Ce_{\mo(a)}\otimes e_{\mt(a)}C, \\ &\qquad d^1(e_{\mo(a)}\otimes e_{\mt(a)})=a\otimes e_{\mt(a)}-e_{\mo(a)}\otimes a=ae_{\mt(a)}\otimes e_{\mt(a)} -e_{\mo(a)}\otimes e_{\mo(a)}a\\
&\sfp^2=\bigoplus_{r\in R}Ce_{\mo(r)}\otimes e_{\mt(r)}C, \\ &\qquad d^2({e_{\mo(r)}}\otimes {e_{\mt(r)})}=\sum_{i=1}^n\sum_{j=1}^{s_i}c_i\alpha_{1,i}\cdots \alpha_{j-1,i}e_{\mo(\alpha_{j,i})}\otimes e_{\mt(\alpha_{j,i})} \alpha_{j+1,i}\cdots\alpha_{s_i,i}
\end{align*}  for any $r=\sum_{i=1}^nc_i\alpha_{1,i}\cdots \alpha_{s_i,i}\in R$. The descriptions of $\sfp^3$ and $d^3$ are a little more technical. It was proved in \cite{GSZ} that there exist sets $g^n$ for $n\pgq 0$ such that $g^0=\{e_x;x\in Q_0\}$, $g^1=Q_1,$ $g^2=R$ and for $n\pgq 3$, the elements in $g^n$ are  elements $x\in \cup_{i,j\in Q_0}e_ikQ e_j$ satisfying $x=\sum_{y\in g^{n-1}}y\alpha_y=\sum_{z\in g^{n-2}}z\beta_z$ for unique $\alpha_y$ and $\beta_z$ in $kQ$ having special properties and such that $\left(\sfr^*,\delta^*\right)$ is a minimal projective right resolution of the $C$-module $C/\rad(C)$, where $\sfr^n=\bigoplus_{x\in g^n}e_{\mt(x)}C$ and $\delta^n:\sfr^n\rightarrow \sfr^{n-1}$ is defined by $\delta^n(e_{\mt(x)})=\sum_{y\in g^{n-1}}\alpha_ye_{\mt(x)}$.  

E.L.~Green and N.~Snashall then defined $\sfp^3=\bigoplus_{x\in g^3}Ce_{\mo(x)}\otimes e_{\mt(x)}C$. 
 Moreover, if $x\in g^3$, then $x$ may be written uniquely $x=\sum_{r\in R}r\alpha_r=\sum_{r\in R}\gamma_rr\gamma_r'$ where $\alpha_r$ and $\gamma_r$ are in $kQ^+.$ The map $d^3$ is then given by $d^3(e_{\mo(x)}\otimes e_{\mt(x)})=\sum_{r\in R}(e_{\mo(r)}\otimes e_{\mt(r)}\alpha_r-\gamma_re_{\mo(r)}\otimes e_{\mt(r)}\gamma_r').$ Finally, they proved in \cite[Theorem 2.9]{GS1} that  the sequence \[\sfp^3\xrightarrow{\ d^3\ }\sfp^2\xrightarrow{\ d^2\ }\sfp^1\xrightarrow{\ d^1\ } \sfp^0\xrightarrow{\ d^0\ } C\rightarrow 0\] forms part of a minimal projective resolution of the $C$-$C$-bimodule $C.$

\begin{example}\label{ex:phi2 not surjective}
We return to Example~\ref{ex:ABS1 continued} and use minimal projective resolutions to compute $\HH^2(C)$, $\HH^2(B)$ and determine $\varphi^2.$

Using the results described above, a minimal projective $C$-$C$-bimodule resolution of $C$ is given by 
\[ 0\rightarrow \sfp^2=Ce_1\otimes e_3C\xrightarrow{d^2}\sfp^1=(Ce_1\otimes e_2C)\oplus(Ce_2\otimes e_3C)\oplus (Ce_1\otimes e_3C)\xrightarrow{d^1}\sfp^0=\bigoplus_{i=1}^3 Ce_i\otimes e_i C\rightarrow0.\] Therefore $\HH^*(C)$ is the cohomology of the complex 
\[ 0\rightarrow \Hom_{C\da C}(\sfp^0,C)\xrightarrow{d_1}\Hom_{C\da C}(\sfp^1,C)\xrightarrow{d_2}\Hom_{C\da C}(\sfp^2,C)\rightarrow 0. \]
A morphism $f\in \Hom_{C\da C}(\sfp^0,C)$ is entirely determined by $f(e_i\otimes e_i)=\lambda_ie_i\in e_iCe_i$ for $i=1,2,3$, where $\lambda_i\in k.$ Therefore 
\begin{align*}
d_1f(e_1\otimes e_2)&=(\lambda_2-\lambda_1)\alpha\\
d_2f(e_2\otimes e_3)&=(\lambda_3-\lambda_2)\beta\\
d_3f(e_1\otimes e_3)&=(\lambda_3-\lambda_1) \gamma 
\end{align*} so that $\dim_k \ker d_1=1$ and $\dim_k \im d_1=2.$ A morphism $g\in \Hom_{C\da C}(\sfp^1,C)$ is entirely determined by $g(e_1\otimes e_2)=\mu_1\alpha\in e_1Ce_2$, $g(e_2\otimes e_3)=\mu_2\beta\in e_2Ce_3$, $g(e_1\otimes e_3)=\mu_3 \gamma \in e_1Ce_3$,  where $\mu_i\in k.$ Therefore $d_2g(e_1\otimes e_3)=0$ and we have $\dim_k\ker d_2=3$ and $\im d_2=0.$

It then follows that $\dim_k \HH^0(C)=\dim_k \HH^1(C)=\dim_k \HH^2(C)=1$ and $\dim_k \HH^n(C)=0$ for $n\pgq 3$, and moreover that a basis for $\HH^1(C)$ (respectively $\HH^2(C)$) is given by the cohomology class of $\zeta_C$ (respectively $\xi_C$) determined by $\zeta_C(e_1\otimes e_2)=\alpha$, $\zeta_C$ sends $e_2\otimes e_3$ and $e_1\otimes e_3$ to $0$ and  $\xi_C(e_1\otimes e_3)= \gamma $.

The computation for $B$ is more complicated. The sets $g^n$ for $n=0,1,2,3$ are given by
\begin{align*}
g^0&=\{e_i;i=1,2,3\}\\
g^1&=\left\{\alpha;\beta; \gamma ;\delta\right\}\\
g^2&=\left\{g_1^2:=\alpha\beta;g_2^2:=\delta\alpha;g_3^2:=\beta\delta;g_4^2:=\delta \gamma \delta\right\}\\
g^3&=\{\alpha\beta\delta=g_1^2\delta=\alpha g_3^2;\ \beta\delta\alpha=g_3^2\alpha=\beta g_2^2;\ \delta\alpha\beta=g_2^2\beta=\delta g_1^2;\\
&\qquad \beta\delta \gamma \delta=g_3^2 \gamma \delta=\beta g_4^2;\ \delta \gamma \delta \gamma \delta=g_4^2 \gamma \delta=\delta \gamma  g_4^2;\ \delta \gamma \delta\alpha=g_4^2\alpha=\delta \gamma  g_1^2\}
\end{align*} so that the beginning of a minimal projective resolution of the $B$-$B$-bimodule $B$ is given by 
\begin{align*}
\cdots&\rightarrow \sfq^3=\left(\bigoplus_{i=1}^3(Be_i\otimes e_iB)\right)\oplus (Be_2\otimes e_1B)\oplus (Be_3\otimes e_1B)\oplus(Be_3\otimes e_2B)\\
&\xrightarrow{\partial ^3} \sfq^2=(Be_1\otimes e_3B)\oplus (Be_3\otimes e_2B)\oplus (Be_2\otimes e_3B)\oplus (Be_3\otimes e_1B)\\
&\xrightarrow{\partial ^2} \sfq^1=(Be_1\otimes e_2B)\oplus (Be_2\otimes e_3B)\oplus (Be_1\otimes e_3B)\oplus (Be_3\otimes e_1B)\\
&\xrightarrow{\partial ^0} \sfq^0=\bigoplus_{i=1}^3(Be_i\otimes e_iB)\rightarrow B\rightarrow 0
\end{align*} with 
\begin{xalignat*}{2}
&\partial ^2(e_1\otimes e_3)=e_1\otimes \beta+\alpha\otimes e_3&&\partial ^2(e_2\otimes e_1)=e_2\otimes \delta +\beta\otimes e_1\\
&\partial ^2(e_3\otimes e_1)=e_3\otimes  \gamma \delta+\delta\otimes \delta+\delta \gamma \otimes e_1&&\partial ^2(e_3\otimes e_2)=e_3\otimes \alpha_\delta\otimes e_2\\
&\partial ^3(e_1\otimes e_1)=e_1 \otimes \delta -\alpha\otimes e_1 &&\partial ^3(e_2\otimes e_2)=e_2 \otimes  \gamma \delta-\beta\otimes e_2 \\
&\partial ^3(e_3\otimes e_3)=e_3 \otimes \alpha-\beta\otimes e_3 &&\partial ^3(e_2\otimes e_1)=e_2 \otimes  \gamma \delta-\delta \gamma \otimes e_1 \\
&\partial ^3(e_3\otimes e_1)=e_3 \otimes \beta-\delta\otimes e_1 &&\partial ^3(e_3\otimes e_2)=e_3 \otimes \alpha-\delta \gamma \otimes e_2.
\end{xalignat*} Therefore the first Hochschild cohomology groups for $B$ are the cohomology groups of the complex 
\[ 0\rightarrow \Hom_{B\da B}(\sfq^0,B)\xrightarrow{\partial _1}\Hom_{B\da B}(\sfq^1,B)\xrightarrow{\partial _2}\Hom_{B\da B}(\sfq^2,B)\xrightarrow{\partial _3}\rightarrow \Hom_{B\da B}(\sfq^3,B)\rightarrow\cdots \] in which $\partial _i=\Hom_{B\da B}(\partial ^i,B)$ for all $i.$ A computation similar to that above shows that $\dim_k \im \partial _1=3$, $\dim_k\ker \partial _1=2$, $\im\partial _2=0$, $\dim_k\ker \partial _2=5$ and $\dim_k\ker\partial _3=2$ so that $\dim_k \HH^0(B)=\dim_k\HH^1(B)=\dim_k\HH^2(B)=2$, and  that  a basis for $\HH^1(B)$ (respectively $\HH^2(B)$) is given by $\left\{\zeta_B^1;\zeta_B^2\right\}$ (respectively $\left\{\xi_B^1;\xi_B^2\right\}$) determined by $\zeta_B^1(e_1\otimes e_2)=\alpha$, $\zeta_B^1(e_1\otimes e_3)= \gamma $, $\xi_B^1(e_1\otimes e_3)= \gamma \delta \gamma $ and $\xi_B^2(e_3\otimes e_1)=\delta$, and $\zeta_B^j$ and $\xi_B^j$ send all other generators of the corresponding projective to $0.$

We now check that $\varphi^2(\xi_B^j)=0$ for $j=1,2$ so that $\varphi^2=0.$ There exist comparison morphisms $\omega_B^n:\sfq^n\rightarrow \sfbar^n(B)$ and $\tau^n_B:\sfbar^n(B)\rightarrow \sfq^n$ between the minimal and the bar resolutions of $B$, and similarly for $C.$ Given a cocycle $f\in\Hom_{B\da B}(\sfq^n,B)$, we have $[f]=[\tau^n_B(f)]$ and for any cocycle $g\in\Hom_k(B^{\otimes n},B)\cong\Hom_{B\da B}(\sfbar^n(B),B)$ we have $[g]=[\tau^n_B(g)]$. Moreover, 
if $g\in\Hom_{B\da B}(\sfbar^n(B),B)$, then via the isomorphism $\Hom_k(B^{\otimes n},B)\cong\Hom_{B\da B}(\sfbar^n(B),B)$, we have $\varphi^n([g])=[pgq^{\otimes (n+2)}]$; indeed, if $\bar{g}\in \Hom_k(B^{\otimes n},B)$ is the corresponding linear map, we have, with arguments similar to those in the proof of Lemma~\ref{cor:cohom C embeds B}, 
\begin{align*}
pgq^{\otimes(n+2)}(c_0\otimes c_1\otimes\cdots\otimes c_n\otimes c_{n+1})&=p\left(q(c_0)g(1\otimes q(c_1)\otimes \cdots \otimes q(c_n)\otimes 1)q(c_{n+1})\right)\\
&=c_0p(\bar{g}(q(c_1)\otimes \cdots \otimes q(c_n)))c_{n+1}\\
&=c_0p\bar gq^{\otimes n}(c_1\otimes \cdots \otimes c_n)c_{n+1}
\end{align*} which is the element in $\Hom_{C\da C}(C^{\otimes(n+2)},C)$ corresponding to $p\bar gq^{\otimes n}\in\Hom_k(C^{\otimes n},C)$, the cocycle representing $\varphi^n([\bar g])$,  as required.
It then follows that $\varphi^n([f])$ is given by the cohomology class of the composition 
\[ \sfp^n\xrightarrow{\omega_C^n}\sfbar^n(C)\xrightarrow {q^{\otimes{(n+2)}}}\sfbar^n(B)\xrightarrow{\tau_B^n}\sfq^n\xrightarrow{f}B\xrightarrow{p}C. \] Our claim is then a consequence of the fact that $p(\delta)=0$ which implies that $p\xi_B^i=0$ for $i=1,2$.
\end{example}

\paragraph{\textbf{Acknowledgements.}} The first author gratefully acknowledges partial support from the NSERC of Canada, the FRQ-NT of Qu\'ebec and the Universit\'e de Sherbrooke. The second author is grateful to  the Universit\'e de Sherbrooke and  the University of Connecticut for their support. The third author gratefully acknowledges support from the NSF CAREER grant DMS-1254567 and the University of Connecticut. The fourth author is grateful to the UMI-CRM of the CNRS in Montr\'eal and  the Universit\'e de Sherbrooke for their support, enabling her to participate in this research project.

\end{document}